\theoremstyle{thmstyleone}%
\newtheorem{theorem}{Theorem}%
\newtheorem{proposition}[theorem]{Proposition}%
\newtheorem{lemma}[theorem]{Lemma}%
\newtheorem{corollary}[theorem]{Corollary}
\theoremstyle{thmstyletwo}%
\newtheorem{remark}{Remark}%
\theoremstyle{thmstylethree}%
\newtheorem{definition}[theorem]{Definition}%
\begin{document}

\title[Diffusion-Shock PDEs for Deep Learning on Position-Orientation Space]{Diffusion-Shock PDEs for Deep Learning on Position-Orientation Space}

\author*[1]{\fnm{Finn M.} \sur{Sherry}}\email{f.m.sherry@tue.nl}

\author[2]{\fnm{Kristina} \sur{Schaefer}}\email{schaefer@mia.uni-saarland.de}

\author[1]{\fnm{Remco} \sur{Duits}}\email{r.duits@tue.nl}

\affil*[1]{\orgdiv{CASA \& EAISI, Dept. of Mathematics \& Computer Science}, \orgname{Eindhoven University of Technology}, \orgaddress{\city{Eindhoven}, \country{the Netherlands}}}

\affil[2]{\orgdiv{Mathematical Image Analysis Group, Department of Mathematics and Computer Science}, \orgname{Saarland University}, \orgaddress{\city{Saarbrücken}, \country{Germany}}}

\abstract{
We extend Regularised Diffusion-Shock (RDS) filtering from Euclidean space $\Rtwo$ \cite{schaefer2024regularisedds} to position-orientation space $\M \cong \Rtwo \times S^1$.
This has numerous advantages, e.g. making it possible to enhance and inpaint crossing structures, since they become disentangled when lifted to $\M$. 
We create a version of the algorithm using gauge frames to mitigate issues caused by lifting to a finite number of orientations. This leads us to study generalisations of diffusion, since the gauge frame diffusion is not generated by the Laplace-Beltrami operator. 
RDS filtering compares favourably to existing techniques such as Total Roto-Translational Variation (TR-TV) flow \cite{smets2021tvflow,chambolle2019tv}, NLM \cite{nlm}, and BM3D \cite{bm3dold} when denoising images with crossing structures, particularly if they are segmented. Furthermore, we see that $\M$ RDS inpainting is indeed able to restore crossing structures, unlike $\Rtwo$ RDS inpainting. \\[4pt]
In addition to the contributions of our SSVM submission \cite{Sherry2025DiffusionShock}, in this extended work we provide new theorical results and automate RDS filtering by integrating it into a geometric deep learning framework. Regarding our theoretical contributions, we prove that our generalised diffusions are still well-posed, smoothing, and analytic. We developed an RDS filtering PDE layer for the PDE-CNN and PDE-G-CNN deep learning frameworks, using a novel gating mechanism.
We show that these new RDS PDE layers can be beneficial in various impainting and denoising tasks.
}

\keywords{Geometric Deep Learning, Diffusion-Shock Filter, Crossing-Preserving, Regularisation, Equivariant Neural Networks}

\maketitle

\section{Introduction}\label{sec:introduction}
PDE-based image processing techniques have been studied  for decades and successfully employed in myriad applications including image analysis, denoising, and inpainting (e.g. \cite{Sapiro2006GeometricAnalysis,AGLM93,Lindeberg1994ScaleVision,Lindeberg1994ScaleScales,Schnorr1994UniqueFunctionals,Morel1995VariationalSegmentation,Weickert1998AnisotropicProcessing,Papafitsoros2014CombinedReconstruction,Chambolle2004AlgorithmApplications,Osher1988FrontsFormulations}), while providing clear geometric interpretations. For example, many of these PDE-based methods are inherently equivariant, preserving the symmetries (to e.g. roto-translations) of the plane.
Within this class, Regularised Diffusion-Shock (RDS) filtering, recently developed by Schaefer \& Weickert, is a provably stable and highly effective method \cite{SW23,schaefer2024regularisedds}.

\begin{figure*}
\centering
\begin{tikzpicture}
\node[anchor=south west, inner sep=0] (image) at (0, 0) {\includegraphics[width=\textwidth]{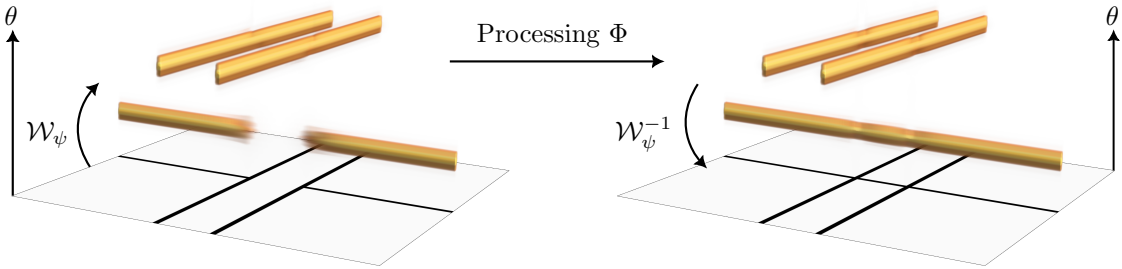}};
\begin{scope}[x={(image.south east)}, y={(image.north west)}, shift={(0.5, 0.5)}, scale=0.5]
\draw(-0.83, -0.1)node{\contour{white}{$\W_\psi$}};
\draw(-0.89, 0.6)node{$\theta$};
\draw(0.0, 0.48)node{Processing $\Phi$};
\draw(0.15, -0.1)node{\contour{white}{$\W_\psi^{-1}$}};
\draw(0.92, 0.6)node{$\theta$};
\end{scope}
\end{tikzpicture}
\caption{Performing multi-orientation processing. Lifting disentangles crossing and overlapping structures.}
\label{fig:multiorientation_processing}
\end{figure*}

Many of these PDE methods have benefited from being lifted to position-orientation space $\M \cong \Rtwo \times S^1$; here we mention just a few successful examples \cite{Citti2016mcf,cittisartiJMIV,Batard2019ConnectionPerception,Boscain2018CorticalDiffusion,DuitsJMIV2013,Bekkers2014MultiTracking,chambolle2019tv,smets2021tvflow}. Processing in position-orientation space has numerous advantages. Most significantly, crossing structures become disentangled by lifting (see Fig.~\ref{fig:multiorientation_processing}), which helps preserve crossings and corners during the PDE evolution. Orientation-aware processing, such as contour completion, also becomes more straightforward, since orientation is encoded in the domain. 
The authors extended RDS filtering to position-orientation space $\M$ in \cite{Sherry2025DiffusionShock}, combining the edge-preserving denoising capabilities of RDS filtering with the benefits of processing with PDEs on $\M$, such as preservation of crossings. This paper is a substantially extended journal version of the aforementioned article \cite{Sherry2025DiffusionShock}, which was presented at the 10th International Conference on Scale Space and Variational Methods in Computer Vision (SSVM 2025); we here i.a. present new theoretical results (e.g. Thm.~\ref{thm:data_driven_lie_cartan_laplacian_sectorial}, Thm.~\ref{thm:stability}) regarding RDS filtering on $\M$, and moreover automate RDS filtering via PDE-based deep learning as we will explain next.

In the past decade, deep learning methods have had a large impact on image processing, delivering great performance. However, these models are typically hard to understand. This means, among other things, that there are no guarantees that the models will be stable, or respect the symmetries in the problem. For this reason, there has been a lot of interest in geometric deep learning (e.g. \cite{Sifre2014RigidClassification,Sifre2013RotationDiscrimination,Oyallon2015DeepClassification,Cohen2021EquivariantNetworks,Cohen2016GroupNetworks,Cohen2019GeneralSpaces,Sherry2024DesigningODEs,Veeling2018RotationPathology,Bekkers2018RotoAnalysis,Bekkers2019SplineGroups,Liu2021BundelLearning,Carlsson2025GeometryLearning,Cassel2025BundleNetworks,Bellaard2025EuclideanSpace,Gerken2022EquivarianceImages}). By integrating ideas from classical processing methods, geometric deep learning models can moreover improve parameter efficiency, since network capacity does not have to be wasted on enforcing e.g. symmetry preservation. 
PDE-based convolutional neural networks, developed by Smets et al. \cite{Smets2022PDEGCNNs}, are 
a class of geometric deep learning methods in which the layers of the network solve PDEs known from classical image analysis. The coefficients of these PDEs, which have clear geometric interpretations, can then be learned from data. 
We develop and test new PDE-G-CNNs based on the RDS PDE as a new contribution of this paper compared to our previous conference article.

Next, in Sec.~\ref{sec:related_work}, we go into detail on previous work in the aforementioned fields. In Sec.~\ref{sec:contributions} we list the contributions of our work. Finally, we summarise the structure of the article in Sec.~\ref{sec:outline}.

\subsection{Related Work}\label{sec:related_work}

\subsubsection{Smoothing and Shock Filtering}
Let us highlight some work on the combination of smoothing and shock filters, since this forms the basis for RDS filtering. 

In~\cite{SW23} Schaefer and Weickert proposed diffusion shock inpainting as a combination of coherence-enhancing shock filtering~\cite{weickert2003shockfilters} and homogeneous diffusion~\cite{Ii62,Ii63a}, and they extended this method to RDS inpainting in \cite{schaefer2024regularisedds}. With that they continued a line of work that explicitly, e.g.~\cite{AM94,GSZ02}, or implicitly, e.g.~\cite{PM90,Sa96}, combine smoothing and shock filtering, though with a different goal: image inpainting~\cite{BSCB00,MM98a}, the task of filling in gaps in images.

\subsubsection{Multi-Orientation Image Processing}
Many PDE-based image processing techniques have been extended to position-orientation spaces $\mathbb{M}_d \cong \mathbb{R}^d \times S^{d - 1}$.
Total Roto-Translational Variation (TR-TV) on $\M$ has been studied by Chambolle \& Pock \cite{chambolle2019tv} and Smets et al.~\cite{smets2021tvflow,SSVM2019}; see Pragliola et al.~\cite{CalatroniSIAMreview2023} for a review on the topic. There are interesting links between TV flow and elastica which have inspired further regularisation methods \cite{Liu2023ElasticaRegularization,chambolle2019tv}.
Mean Curvature (MC) flows on $\mathbb{M}_d$ have been employed for various purposes including denoising and inpainting by Citti et al. \cite{Citti2016mcf} ($d = 2$) and St.~Onge et al. \cite{stonge2019mcf} ($d = 3$). 
Diffusion PDEs on $\mathbb{M}_d$ have been employed in denoising, inpainting, and neurogeometry \cite{cittisartiJMIV,Prandi,franken2009cedos,petitotbook}.
Morphology PDEs on $\mathbb{M}_d$ have been effective in geometric deep learning \cite{smets2024thesis,Bellaard2023Analysis} and fiber enhancement \cite{DuitsJMIV2013}.
RDS filtering combines diffusion for denoising with morphological shock filtering to preserve edges like TR-TV \cite{chambolle2019tv} and MC flows \cite{smets2021tvflow}; by extending the RDS processing to $\M$ we include preservation of crossing/overlapping structures.

In \cite{Sherry2025DiffusionShock}, the authors extended RDS filtering to position-orientation space $\M$, combining the edge-preserving denoising capabilities of RDS filtering with the benefits of processing with PDEs on $\M$, such as preservation of crossings \cite{franken2009cedos,smets2021tvflow}. One limitation of the novel methods, however, is that they require manual parameter tuning. In this work we have integrated the RDS PDE into the PDE-based convolutional neural network framework to address this limitation.

\tikzset{
    pdelayer/.pic={
        \node[plain] (-more) {\rotatebox{90}{$\cdots$}};
        \node[sqr, above of=-more] (-topPDE) {PDE};
        \node[sqr, below of=-more] (-bottomPDE) {PDE};

        \node[plain, right of=-topPDE] (topaffine) {};
        \node[plain, right of=-more] (midaffine) {Affine};
        \node[plain, right of=-bottomPDE] (bottomaffine) {};
        \node[sqr, fit=(topaffine) (midaffine) (bottomaffine)] (-affine) {};

        \node[sqr,fit=(-more) (-topPDE) (-bottomPDE) (-affine)] (-PDE) {};

        \draw (-topPDE) -- (-affine);
        \draw (-more) -- (-affine);
        \draw (-bottomPDE) -- (-affine);
    }
}

\begin{figure*}
\centering
\begin{tikzpicture}[
    node distance=4em,
    circ/.style={circle,draw,thick,minimum size=2em,align=center},
    sqr/.style={rectangle,draw,thick,rounded corners,minimum size=2em,align=center},
    plain/.style={draw=none,align=center},
    ->, >=Stealth
]
\node[plain] (U) {};
\node[sqr, right of=U] (lift) {Lift};
\pic[right of=lift] (leftPDE) {pdelayer};
\node[right=2em of leftPDE-affine] (morePDEs) {\dots};
\pic[right of=morePDEs] (rightPDE) {pdelayer};
\node[sqr, right=2em of rightPDE-affine] (project) {Project};
\node[plain, right of=project] (out) {};

\draw (U) -- (lift);

\draw (lift) -- (leftPDE-topPDE);
\draw (lift) -- (leftPDE-more);
\draw (lift) -- (leftPDE-bottomPDE);

\draw (leftPDE-affine) -- (morePDEs);

\draw (morePDEs) -- (rightPDE-topPDE);
\draw (morePDEs) -- (rightPDE-more);
\draw (morePDEs) -- (rightPDE-bottomPDE);

\draw (rightPDE-affine) -- (project);
\draw (project) -- (out);

\end{tikzpicture}
\caption{Schematic PDE-G-CNN. In PDE-CNNs, the `Lift' and `Project' layers can be omitted. In each layer, every feature map evolves according to a specified PDE with channel-dependent trainable parameters. Subsequently, the feature maps are mixed affinely, where the mixing weights are also trainable.}\label{fig:pde_g_cnn}
\end{figure*}

\subsubsection{PDE-Based Convolutional Neural Networks}\label{sec:pde_based_networks}
Smets et al. \cite{Smets2022PDEGCNNs} proposed a PDE-based generalisation of Cohen \& Welling's Group equivariant Convolutional Neural Networks (G-CNNs) \cite{Cohen2016GroupNetworks}. This framework can be applied to general Lie group homogeneous spaces; we consider only the following two types of PDE-based networks:
\begin{enumerate}
\item PDE-based Convolutional Neural Networks (PDE-CNNs) on $\Rtwo$ \cite{Bellaard2025PDECNNs}, and
\item PDE-based Group equivariant Convolutional Neural Networks (PDE-G-CNNs) on $\M$ \cite{Smets2022PDEGCNNs}.
\end{enumerate}
Fig.~\ref{fig:pde_g_cnn} schematically depicts a PDE-G-CNN. Note how it mirrors classical multi-orientation processing (cf. Fig.~\ref{fig:multiorientation_processing}): the data on $\Rtwo$ is first lifted, then processed in $\M$, and finally projected back to $\Rtwo$. In PDE-CNNs, the lifting and projection layers are omitted, and the PDE processing occurs in $\Rtwo$.

The layers in PDE-(G-)CNNs solve classical image processing PDEs: diffusion, dilation, and erosion \cite{Bellaard2025PDECNNs,Smets2022PDEGCNNs}. Since these PDEs generate a type of generalised (semifield) scale-space \cite{Bellaard2025PDECNNs}, they can be efficiently solved using generalised convolutions. For example, the diffusion PDEs are solved by linearly convolving with a kernel, while the dilation PDEs are solved by max pooling over an area defined by the kernel. 
These networks can also efficiently model convection PDEs. 

These networks are trained by learning the Riemannian metrics that parametrise the PDEs.  
The equivariance of PDE-(G-)CNNs depends on the invariance of the trained metrics. It is therefore possible to make roto-translation equivariant PDE-CNNs, but this restricts the metrics, and so also the kernels, to be isotropic. Conversely, since PDE-G-CNNs operate in $\M \cong \SE(2)$, the metrics do not need to be isotropic. Each PDE can therefore be more expressive without destroying equivariance. 
The metrics moreover have clear geometric interpretations. 
In particular, sub-Riemannian diagonal metrics can be used to approximate association fields, which model human perceptual grouping \cite{cittisartiJMIV,petitotbook,Bellaard2023Analysis}. Fig.~\ref{fig:exact_vs_log_ball} shows isosurfaces of a spatially anisotropic invariant Riemannian distance, and its computationally tractable logarithmic approximation \cite{Smets2022PDEGCNNs}. PDE-G-CNNs effectively learn the shapes of these isosurfaces.

\begin{figure}
\centering
\begin{subfigure}[t]{0.22\textwidth}
\begin{tikzpicture}
\node[anchor=south west, inner sep=0] (image) at (0, 0) {\includegraphics[width=\textwidth]{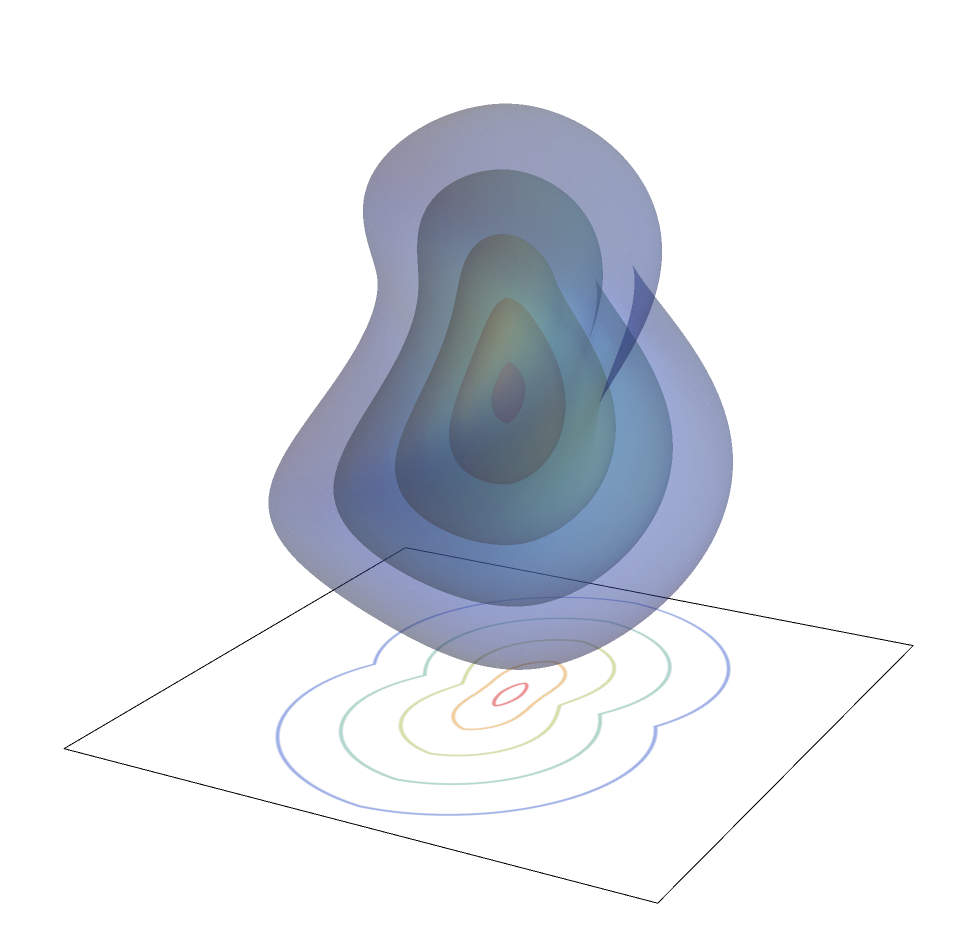}};
\begin{scope}[x={(image.south east)}, y={(image.north west)}, shift={(0.5, 0.5)}, scale=0.5]
\draw[->] (0.37, -0.945) -- (0.5, -0.805);
\draw(0.55, -0.76)node{\contour{white}{$x$}};
\draw[->] (0.37, -0.945) -- (0.16, -0.8885);
\draw(0.11, -0.87)node{\contour{white}{$y$}};
\draw[->] (0.37, -0.945) -- (0.37, -0.72);
\draw(0.37, -0.65)node{\contour{white}{$\theta$}};
\end{scope}
\end{tikzpicture}
\caption{Exact}\label{fig:exact_ball}
\end{subfigure}
\begin{subfigure}[t]{0.22\textwidth}
\begin{tikzpicture}
\node[anchor=south west, inner sep=0] (image) at (0, 0) {\includegraphics[width=\textwidth]{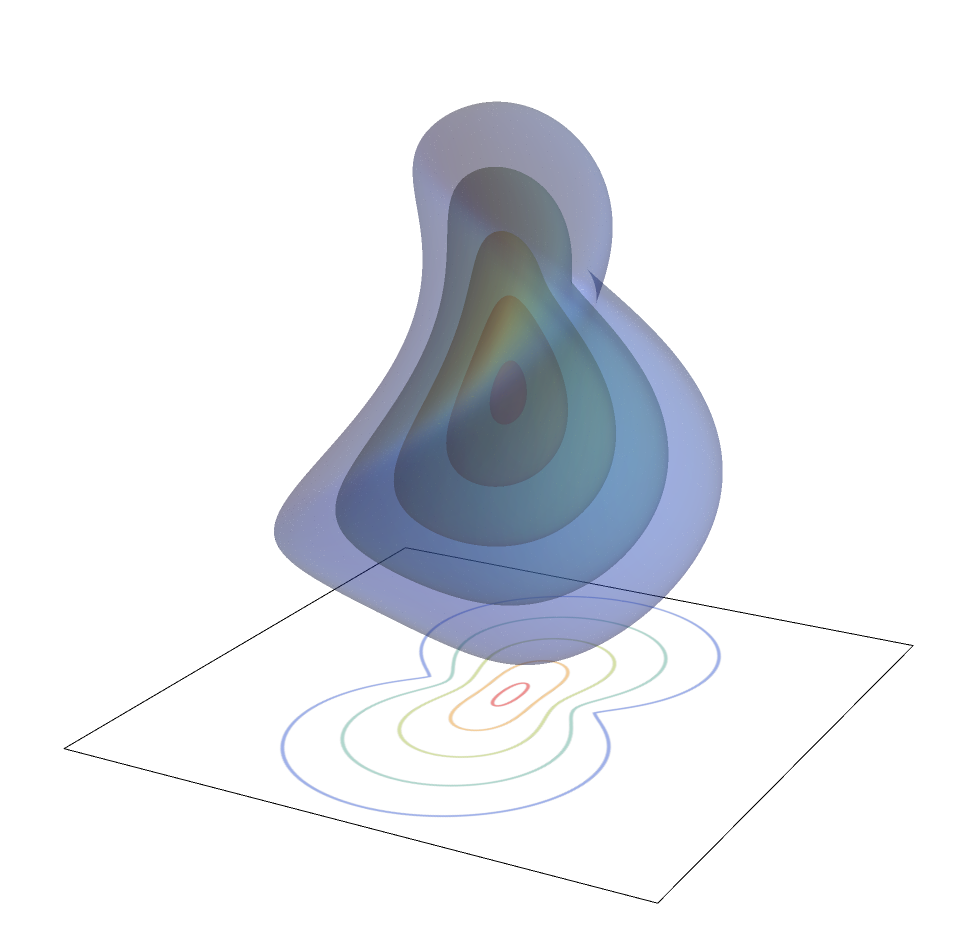}};
\begin{scope}[x={(image.south east)}, y={(image.north west)}, shift={(0.5, 0.5)}, scale=0.5]
\draw[->] (0.37, -0.945) -- (0.5, -0.805);
\draw(0.55, -0.76)node{\contour{white}{$x$}};
\draw[->] (0.37, -0.945) -- (0.16, -0.8885);
\draw(0.11, -0.87)node{\contour{white}{$y$}};
\draw[->] (0.37, -0.945) -- (0.37, -0.72);
\draw(0.37, -0.65)node{\contour{white}{$\theta$}};
\end{scope}
\end{tikzpicture}
\caption{Approximate}
\label{fig:log_ball}
\end{subfigure}
\caption{Isosurfaces of the exact Riemannian distance and its logarithmic aproximation on $\M$ with spatial anisotropy $\zeta = \frac{1}{3}$. In PDE-G-CNNs, the shape of these isosurfaces are learned.}\label{fig:exact_vs_log_ball}
\end{figure}

A nice property of PDE-(G-)CNNs is that they do not require untrainable non-linearities such as ReLU to be expressive due to the presence of trainable linear and non-linear PDE layers. There are other architectures that also include trainable non-linearities. In particular, the Soft Threshold Dynamics (STD) framework for segmentation replaces sigmoid linearities with variational non-linearities, allowing for the incorporation of spatial regularisation and shape priors \cite{Liu2022DeepSegmentation,Tai2024PottsNetworks}. Using multigrid-based schemes, it is then possible to create encoder-decoder architectures with such trainable non-linearities \cite{Tai2024PottsNetworks}.

In Sec.~\ref{sec:trained_diffusion_shock_filtering} we describe a trained RDS filtering layer for both PDE-CNNs and PDE-G-CNNs. We subsequently test these new RDS layers on a denoising and an inpainting task. Since previous work \cite{Smets2022PDEGCNNs,Bellaard2023Analysis,Bellaard2025PDECNNs} has shown that PDE-(G-)CNNs can outperform (G-)CNNs on segmentation tasks, while significantly reducing the number of trainable parameters and improving data efficiency, we use existing PDE layers as a baseline.

\subsection{Our Contributions}\label{sec:contributions}
As previously mentioned, this paper extends \cite{Sherry2025DiffusionShock}, which was presented at SSVM 2025. We develop invariant and gauge frame RDS filtering on position-orientation space $\M$. This involves defining a generalised notion of Laplacians, generated not by the Levi-Civita connection, but by Lie-Cartan connections (Def.~\ref{def:generalised_laplacian}). We theoretically analyse these generalised Laplacians: In the invariant setting, the Laplacian used for the diffusion part of the RDS filtering coincides with the Laplace-Beltrami operator (Thm.~\ref{thm:left_invariant_lie_cartan_laplacian}), while in the gauge frame setting, a discrepancy occurs (Thm.~\ref{thm:data_driven_lie_cartan_laplacian}), which has been previously overlooked in \cite{franken2009cedos,smets2021tvflow}. We assess the denoising and inpainting capabilities of our novel methods experimentally. Our denoising experiments show that our novel methods outperform other PDE-based methods (RDS filtering on $\Rtwo$ \cite{SW23,schaefer2024regularisedds}, TR-TV flows on $\M$ \cite{smets2021tvflow}), as well as non-PDE-based methods (NLM \cite{nlm}, BM3D \cite{bm3dold}) in terms of Peak Signal-to-Noise Ratio (PSNR) in denoising tasks. We moreover confirm that RDS filtering on $\M$ allows for inpainting crossing structures unlike $\Rtwo$ RDS filtering. The implementations of our novel methods and the experiments are available at \url{https://github.com/finnsherry/M2RDSFiltering}.

In this work we present a number of new contributions in addition to those from \cite{Sherry2025DiffusionShock}. We prove that the evolution generated by our generalised Laplacians is analytic (Thm.~\ref{thm:data_driven_lie_cartan_laplacian_sectorial}), which we conjectured in \cite{Sherry2025DiffusionShock}. We show that the $\M$ RDS filtering schemes satisfy a stability principle (Thm.~\ref{thm:stability}), in analogy to \cite[Thm.~1]{schaefer2024regularisedds}. We also develop geometric machine learning algorithms based on RDS filtering, by creating new RDS layers for PDE-(G-)CNNs. Unlike the PDEs previously used in PDE-(G-)CNNs, RDS does not generate a scale space. Therefore, the evolution generated by the RDS PDE cannot be computed simply by convolution, and instead requires a gating approach. We finally compare the new RDS PDE-(G-)CNNs to existing PDE-(G-)CNNS on denoising and inpainting tasks. The implementations of the PDE-(G-)CNNs and the experiments have been added to the open source Python package LieTorch \cite{Smets2022PDEGCNNs}, available at \url{https://gitlab.com/bsmetsjr/lietorch}. 

\subsection{Structure of the Article}\label{sec:outline}
We start by giving an overview of the necessary mathematical preliminaries in Sec.~\ref{sec:preliminaries}. In Sec.~\ref{sec:m2_ds}, we introduce RDS filtering on position-orientation space $\M$. For this, we first define and study generalised Laplacians, which are generated by Lie-Cartan connections as opposed to the Levi-Civita connection. In particular, we show how these generalised Laplacians differ from the Laplace-Beltrami operator (Thms.~\ref{thm:left_invariant_lie_cartan_laplacian},~\ref{thm:data_driven_lie_cartan_laplacian}), and that they still generate analytic semigroups (Thm.~\ref{thm:data_driven_lie_cartan_laplacian_sectorial}, Cor.~\ref{cor:lie_cartan_laplacian_sectorial}), which are diffusion-like (well-posed and smoothing). We subsequently compare RDS filtering to existing denoising methods, and show how on $\M$ we can inpaint crossing structures unlike in $\Rtwo$. We integrate RDS filtering into PDE-(G-)CNNs in Sec.~\ref{sec:trained_diffusion_shock_filtering}, and compare the new RDS PDE layers to existing PDE layers on a denoising and inpainting task. 

\section{Preliminaries}\label{sec:preliminaries}
We here summarise multi-orientation processing and RDS filtering on $\Rtwo$.

\subsection{Multi-Orientation Processing}\label{sec:m2}
Many multi-orientation image processing techniques have been developed and successfully applied to a large variety of tasks, including denoising \cite{chambolle2019tv,stonge2019mcf,smets2021tvflow}, regularisation \cite{CalatroniSIAMreview2023}, and line and contour enhancement \cite{Citti2016mcf,franken2009cedos,DuitsJMIV2013}. These methods work by lifting the data from Euclidean space to \emph{position-orientation space}, so that orientation information is encoded in the domain. In this way, crossing and overlapping structures can be disentangled, as shown in Fig.~\ref{fig:multiorientation_processing}. 
The construction of orientation-aware operators, which can be used to filter oriented features such as blood vessels \cite{Hannink2014CrossingVesselness}, is also simplified. 

Multi-orientation processing has previously been successfully applied to both two dimensional data (e.g. \cite{chambolle2019tv,stonge2019mcf,smets2021tvflow,Citti2016mcf,franken2009cedos,DuitsJMIV2013}) and three dimensional data (e.g. \cite{smets2021tvflow,Franken2011DiffusionsImages,Duits2011FiberMRI,Portegies2015ImprovingDeconvolution}). In this work, we only process images, which live on two dimensional Euclidean space $\Rtwo$, and hence we need to use the corresponding position-orientation space:
\begin{definition}[Position-orientation space\texorpdfstring{ $\M$}{}]\label{def:m2}
The \emph{position-orientation space} $\M$ is defined as the smooth manifold
\begin{equation}\label{eq:m2}
\M \coloneqq (T \Rtwo) \setminus \{0\} / \sim
\end{equation}
where $T$ denotes the tangent bundle and 0 denotes 0-section, and the equivalence relation $\sim$ is given, for $(\vec{x}_1, \dot{\vec{x}}_1), (\vec{x}_2, \dot{\vec{x}}_2) \in T \Rtwo \setminus \{0\}$, by
\begin{multline*}
(\vec{x}_1, \dot{\vec{x}}_1) \sim (\vec{x}_2, \dot{\vec{x}}_2) \iff \\
(\vec{x}_1 = \vec{x}_2 \textrm{ and } \exists \lambda > 0: \dot{\vec{x}}_1 = \lambda \dot{\vec{x}}_2).
\end{multline*}
It follows that $\M \cong \Rtwo \times S^1$, so that we may alternatively denote an element $(\vec{x}, \dot{\vec{x}}) \in \M$ by $(\vec{x}, \theta) \in \Rtwo \times \R / 2\pi \Z$, with $\theta$ the angle that $\dot{\vec{x}}$ makes with the positive $x$-axis.\footnote{This parametrisation is not a true coordinate chart, as there is a discontinuity at $\theta = 0$, but one can easily create two overlapping charts that cover $\M$ from this parametrisation.}
\end{definition}
Position-orientation space $\M$ is naturally acted on by the roto-translation group $\SE(2)$:
\begin{definition}[Special Euclidean Group \texorpdfstring{$\SE(2)$}{SE(2)}]\label{def:se2}
The \emph{2D special Euclidean group} $\SE(2)$ is defined as the Lie group of roto-translations acting on 2D Euclidean space, so $\SE(2) \coloneqq \Rtwo \rtimes \SO(2)$. The group product is given, for $(\vec{x}, R), (\vec{y}, S) \in \SE(2)$, by
\begin{equation}\label{eq:se2_product}
(\vec{x}, R) (\vec{y}, S) \coloneqq (\vec{x} + R \vec{y}, R S).
\end{equation}
The identity element is $e \coloneqq (0, I)$, and the inverse is $(\vec{x}, R)^{-1} \coloneqq (-R^{-1} \vec{x}, R^{-1})$ for $(\vec{x}, R) \in \SE(2)$. We write $R_\theta \in \SO(2)$ for a counter-clockwise rotation by $\theta \in \R / 2\pi \Z$.
\end{definition}
The action $L$ of $\SE(2)$ on $\M$ is given by: 
\begin{equation}\label{eq:action_se2_m2}
L_{(\vec{x}, R_\theta)} (\vec{y}, \phi) \coloneqq (R_\theta \vec{y} + \vec{x}, \phi + \theta),
\end{equation}
with $(\vec{x}, R_\theta) \in \SE(2)$ and $(\vec{y}, \phi) \in \M$.
By choosing reference element $p_0 \coloneqq (\vec{0}, 0) \in \M$, we see that \cite{smets2021tvflow}
\begin{equation}\label{eq:m2_principal_homogeneous_space}
\M \cong \SE(2) / \Stab_{\SE(2)}(p_0) \cong \SE(2),
\end{equation}
with stabiliser $\Stab_{\SE(2)}(p_0) \coloneqq \{g \in \SE(2) \mid L_g p_0 = p_0\}$, so that $\M$ is the \emph{principal homogeneous space} of $\SE(2)$.\footnote{Since $\M$ is diffeomorphic to $\SE(2)$, it is common to identify them. We will however distinguish between position-orientation space $\M$ and the roto-translation group $\SE(2)$ acting thereon.} As a consequence, $\M$ inherits many nice Lie group properties. In particular, it ensures the existence of a global frame of $\SE(2)$ \emph{invariant vector fields}, i.e. vector fields $\A \in \sections(T\M)$ such that 
\begin{equation}\label{eq:invariant_vector_field}
(L_g)_* \A_p = \A_{L_g p}, \textrm{ for } g \in \SE(2), p \in \M,
\end{equation}
where $\sections(T\M)$ are the smooth sections of the tangent bundle $T\M$, and $(L_\cdot)_*$ denotes the pushforward of the action $L_\cdot$. We denote the set of invariant vector fields by 
\begin{equation}\label{eq:left_invariant_vector_fields}
\mathfrak{X}(\M) \coloneqq \{\A \in \sections(T\M) \mid \A \textrm{ satisfies \eqref{eq:invariant_vector_field}}\}.
\end{equation}
Invariant vector fields are equivariant operators (see App.~\ref{sec:equivariance} for details on equivariance):
\begin{equation}\label{eq:invariant_vector_fields_equivariant}
\begin{split}
(\cL_g \after \A)|_p f & = \A_{L_g^{-1} p} f = (L_g^{-1})_* \A_p f \\
&  = \A_p (f \after L_g^{-1}) = (\A \after \cL_g)|_p f,
\end{split}
\end{equation}
with $g \in \SE(2)$, $p \in \M$, $\A \in \mathfrak{X}(\M)$, and $f \in \Ltwo(\M)$, making them suitable building blocks for our equivariant processing: we will use them in the generators of our equivariant diffusion-shock evolutions.  
We choose the following invariant frame:
\begin{definition}[Invariant Frame]\label{def:left_invariant_frame}
We define the invariant vector fields $\A_1, \A_2, \A_3 \in \mathfrak{X}(\M)$ as 
\begin{equation}\label{eq:left_invariant_vector_frame}
\begin{split}
\A_1|_p & \coloneqq (L_{g_p})_* \partial_x|_{p_0}, \\
\A_2|_p & \coloneqq (L_{g_p})_* \partial_y|_{p_0}, \textrm{ and } \\
\A_3|_p & \coloneqq (L_{g_p})_* \partial_\theta|_{p_0},
\end{split}
\end{equation}
where $g_p = (\mathbf{x}, R_\theta) \in \SE(2)$ for $p = (\mathbf{x}, \theta) \in \M$.
Together, they form a basis for $\mathfrak{X}(\M)$.
\end{definition}
With respect to the fixed coordinate frame $\{\partial_x, \partial_y, \partial_\theta\}$, the invariant frame is given, for $p = (\vec{x}, \theta) \in \M$, by $\A_1|_p = \cos(\theta) \partial_x|_p + \sin(\theta) \partial_y|_p$, $\A_2|_p = -\sin(\theta) \partial_x|_p + \cos(\theta) \partial_y|_p$, and $\A_3|_p = \partial_\theta|_p$.
From now, we will refer to our chosen frame $\{\A_i\}_i$ simply as \emph{the invariant frame}.
Working in the invariant frame has other upsides in addition to equivariance. In particular, $\A_1$ points spatially along the local orientation, while $\A_2$ points laterally. It is consequently easy to construct operators that detect lines and edges, for instance, which are necessary of RDS filtering (e.g. Eqs.~\eqref{eq:shock_R2} and \eqref{eq:morph_switch}).

\begin{figure*}
\centering
\begin{subfigure}[t]{0.2\textwidth}
\begin{tikzpicture}
\node[anchor=south west, inner sep=0] (image) at (0, 0) {\includegraphics[width=\textwidth]{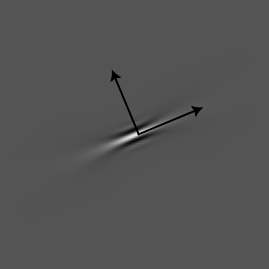}};
\begin{scope}[x={(image.south east)}, y={(image.north west)}, shift={(0.5, 0.5)}, scale=0.5]
\draw(0.59, 0.21)node{\contour{gray}{$\A_1$}};
\draw(-0.2, 0.56)node{\contour{gray}{$\A_2$}};
\end{scope}
\end{tikzpicture}
\caption{Cake wavelet}\label{fig:cakewavelet}
\end{subfigure}
\begin{subfigure}[t]{0.225\textwidth}
\begin{tikzpicture}
\node[anchor=south west, inner sep=0] (image) at (0, 0) {\includegraphics[width=\textwidth]{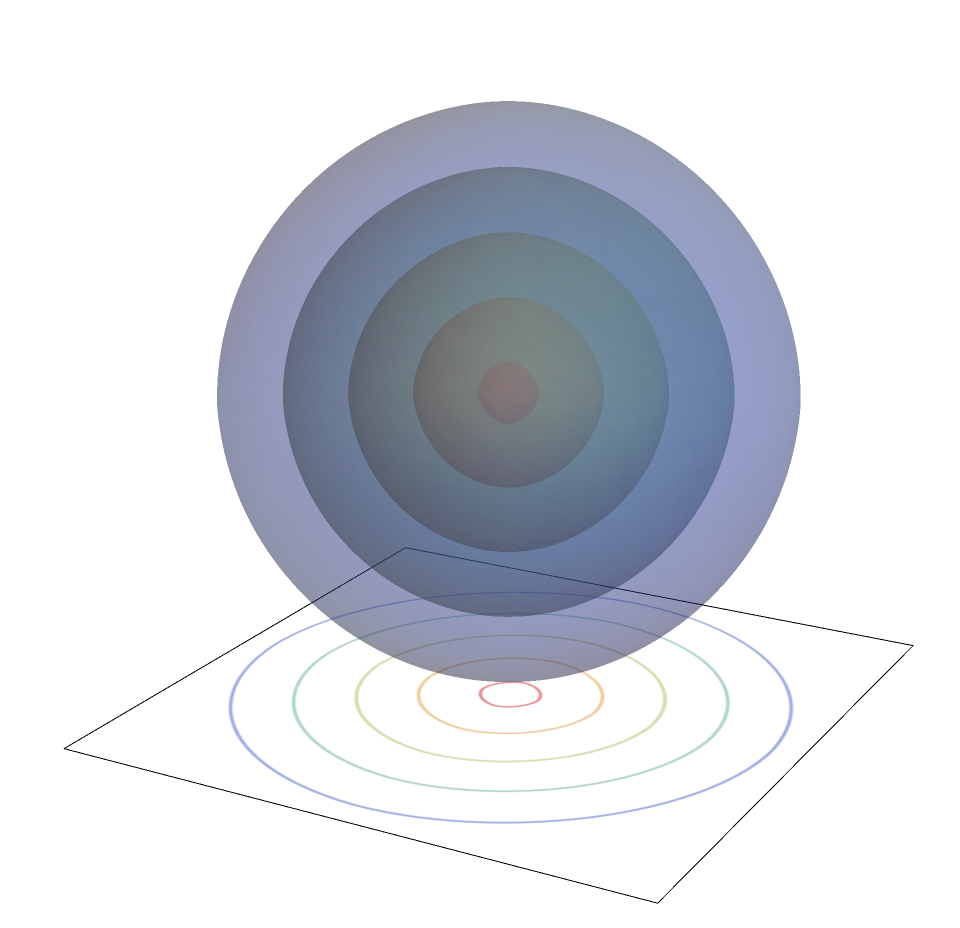}};
\begin{scope}[x={(image.south east)}, y={(image.north west)}, shift={(0.5, 0.5)}, scale=0.5]
\draw[->] (0.37, -0.945) -- (0.5, -0.805);
\draw(0.55, -0.76)node{\contour{white}{$x$}};
\draw[->] (0.37, -0.945) -- (0.16, -0.8885);
\draw(0.11, -0.87)node{\contour{white}{$y$}};
\draw[->] (0.37, -0.945) -- (0.37, -0.72);
\draw(0.37, -0.65)node{\contour{white}{$\theta$}};
\end{scope}
\end{tikzpicture}
\caption{$\zeta = 1$}\label{fig:exact_1_ball}
\end{subfigure}
\begin{subfigure}[t]{0.225\textwidth}
\begin{tikzpicture}
\node[anchor=south west, inner sep=0] (image) at (0, 0) {\includegraphics[width=\textwidth]{Figures/exact_3_ball.png}};
\begin{scope}[x={(image.south east)}, y={(image.north west)}, shift={(0.5, 0.5)}, scale=0.5]
\draw[->] (0.37, -0.945) -- (0.5, -0.805);
\draw(0.55, -0.76)node{\contour{white}{$x$}};
\draw[->] (0.37, -0.945) -- (0.16, -0.8885);
\draw(0.11, -0.87)node{\contour{white}{$y$}};
\draw[->] (0.37, -0.945) -- (0.37, -0.72);
\draw(0.37, -0.65)node{\contour{white}{$\theta$}};
\end{scope}
\end{tikzpicture}
\caption{$\zeta = \frac{1}{3}$}\label{fig:exact_3_ball}
\end{subfigure}
\begin{subfigure}[t]{0.225\textwidth}
\begin{tikzpicture}
\node[anchor=south west, inner sep=0] (image) at (0, 0) {\includegraphics[width=\textwidth]{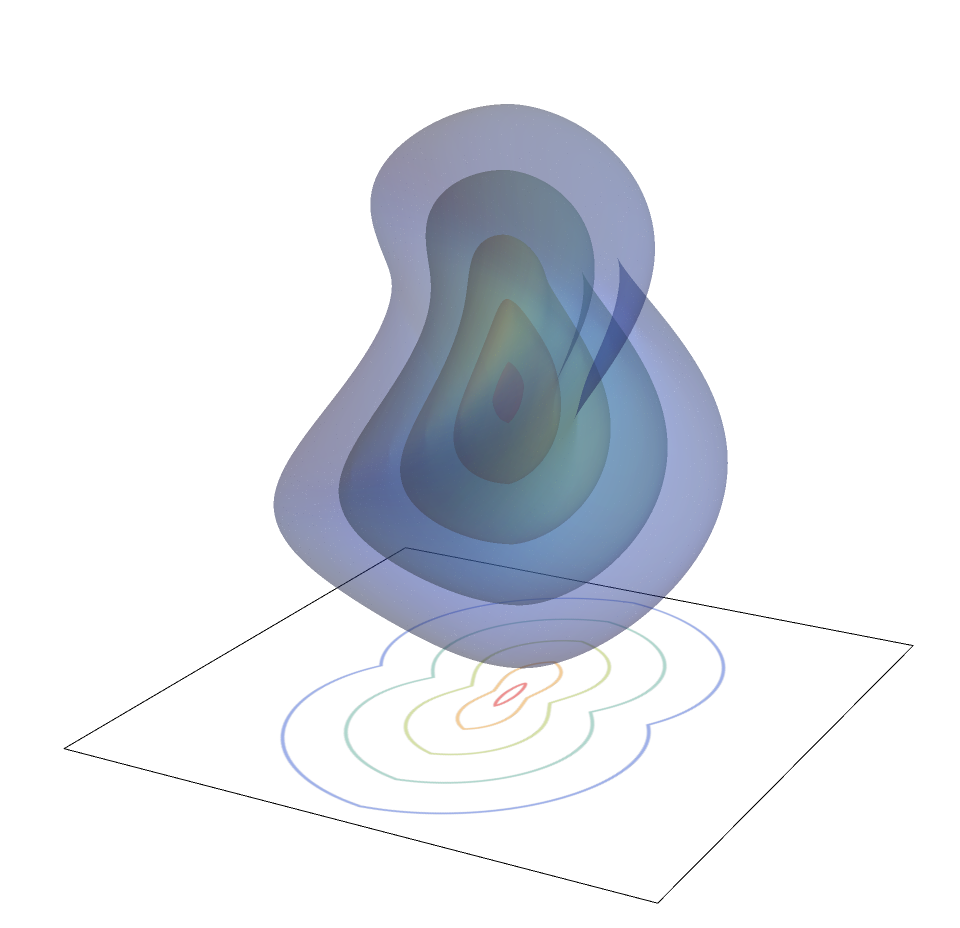}};
\begin{scope}[x={(image.south east)}, y={(image.north west)}, shift={(0.5, 0.5)}, scale=0.5]
\draw[->] (0.37, -0.945) -- (0.5, -0.805);
\draw(0.55, -0.76)node{\contour{white}{$x$}};
\draw[->] (0.37, -0.945) -- (0.16, -0.8885);
\draw(0.11, -0.87)node{\contour{white}{$y$}};
\draw[->] (0.37, -0.945) -- (0.37, -0.72);
\draw(0.37, -0.65)node{\contour{white}{$\theta$}};
\end{scope}
\end{tikzpicture}
\caption{$\zeta = \frac{1}{5}$}\label{fig:exact_5_ball}
\end{subfigure}
\caption{(a) Cake wavelet for orientation $\theta = \pi / 8$. (b-d) Isosurfaces of invariant Riemannian distances with varying spatial anisotropies $\zeta \coloneqq \sqrt{g_{11} / g_{22}}$.}
\label{fig:summary_M2}
\end{figure*}

We can furthermore equip $\M$ with a Riemannian metric tensor field (metric for short) $\G$. The metric defines an inner product on every tangent space, giving us a notion of lengths and angles, and moreover allows us to properly define differential operators like the gradient and Laplacian, as we will discuss in Sec.~\ref{sec:generalised_laplacians}. Finally, we can use it to define a distance $d_\G: \M \times \M \to \R_{\geq 0}$ via
\begin{equation}\label{eq:m2_distance}
d_\G(p, q) \coloneqq \inf_{\gamma \in \Gamma_p^q} \int_0^1 \sqrt{\G_{\gamma(t)}(\dot{\gamma}(t), \dot{\gamma}(t))} \diff t,
\end{equation}
where $\Gamma_p^q \coloneqq \{\gamma \in \operatorname{PC}^1([0, 1], \M) \mid \gamma(0) = p, \gamma(1) = q\}$. For equivariant processing we should choose invariant metrics, so that $(L_g)^* \G = \G$ for all $g \in \SE(2)$, with $(L_\cdot)^*$ the pullback of $L_\cdot$. It is not hard to see that a metric is invariant if and only if it is constant with respect to the invariant frame
\begin{equation}\label{eq:invariant_metrics}
\G(\A_i, \A_j) = g_{ij} \in \R, \textrm{ for all } i, j.
\end{equation}
Invariant metrics that are diagonal with respect to the invariant frame, i.e. $\G(\A_i, \A_j) = 0$ for $i \neq j$, have been studied frequently in diverse contexts in the past (e.g. \cite{chambolle2019tv,smets2021tvflow,franken2009cedos,cittisartiJMIV,Prandi,Bellaard2023Analysis,Hannink2014CrossingVesselness,Smets2022PDEGCNNs,bekkers2015subriemanniangeodesics,Duits2018OptimalAnalysis}) due to their transparent interpretation. In this case, the curve optimisation \eqref{eq:m2_distance} can be related to the Reeds-Shepp car \cite{Reeds1990OptimalBackwards}: $g_{11}$, $g_{22}$, and $g_{33}$ then define the costs of the car moving forward, moving sideways, and turning, respectively \cite{Duits2018OptimalAnalysis}. Of particular interest are sub-Riemannian metrics and their spatially anisotropic approximations, so that ``the car cannot move sideways''. This is a natural constraint to impose in problems involving oriented line-like structures, including line and contour completion \cite{cittisartiJMIV}, and perceptual grouping \cite{Berg2025ConnectedComponents}. Fig.~\ref{fig:exact_5_ball} shows the isosurfaces of such an anisotropic Riemannian distance; position-orientations are nearby in terms of this distance if they are simultaneously nearby in terms of position and orientation.

We can gain the benefits of multi-orientation processing for data on $\Rtwo$ by lifting the data to $\M$, using the \emph{orientation score transform}.
\begin{definition}[Orientation Score]\label{def:orientation_score}
The \emph{orientation score transform} $\W_\psi : \Ltwo(\Rtwo) \to \Ltwo(\M)$, where $\psi$ is a \emph{cake wavelet} (see Fig.~\ref{fig:cakewavelet}) \cite{Sherry2025OrientationCake}, is defined by
\begin{equation}\label{eq:ost}
\W_\psi f (\vec{x}, \theta) \coloneqq \int_{\Rtwo} \overline{\psi(R_{\theta}^{-1} (\vec{y} - \vec{x}))} f(\vec{y}) \diff \vec{y}
\end{equation}
for $f \in \Ltwo(\Rtwo)$ and $(\vec{x}, \theta) \in \M$. We then call $\W_\psi f$ the \emph{orientation score} of $f$.
\end{definition}
\begin{remark}
In practice, we use a finite number of orientations in a regular $N$-gon $S_N^1 \cong \{z = e^{i \theta} \in \mathbb{C} \mid z^N = 1\}$. 
\end{remark}
Crossings are disentangled by lifting to $\M$, opening the door for e.g. inpainting crossing structures, which is difficult in $\Rtwo$, see Fig.~\ref{fig:multiorientation_processing}.
Cake wavelets are designed such that data are lifted to the ``correct'' orientation in the sense that $\A_1$ points spatially along the data, while $\A_2$ points laterally \cite{franken2009cedos}. 
Moreover, it has been shown that they closely approximate $\SE(2)$ minimum uncertainty states \cite{Sherry2025OrientationCake}.
Finally, they also allow for fast approximate reconstruction using \cite{franken2009cedos}
\begin{equation}\label{eq:fast_reconstruction}
\begin{split}
f(\vec{x}) & = \W_\psi^{-1}(\W_\psi f) (\vec{x}) \approx \Proj(\W_\psi f) (\vec{x}) \\
&  \coloneqq \sum_{\theta \in S_N^1} \W_\psi f (\vec{x}, \theta).
\end{split}
\end{equation}
Hence, a typical multi-orientation image processing pipeline involves (1) lifting the data to position-orientation space $\M$ with orientation score transform $\W_\psi: \Ltwo(\Rtwo) \to \Ltwo(\M)$, (2) performing equivariant processing $\Phi: \Ltwo(\M) \to \Ltwo(\M)$ in $\M$, and (3) projecting back down to an image using $\Proj: \Ltwo(\M) \to \Ltwo(\Rtwo)$. In this work, $\Phi$ will be the solution operator of RDS filtering.

\subsubsection{Gauge Frames}\label{sec:gauge_frames}
Since we lift with a finite number of rotated cake wavelets, the orientation scores and vector fields are discretised in the orientational direction. Data therefore cannot always be lifted to exactly the correct orientation, and the vector field $\A_1$ only approximately points along the local orientation; the angle between the true spatial orientation and $\A_1$ is called \emph{deviation from horizontality} \cite{franken2009cedos}. 
Additionally, the direction of the lifted data will have an orientational component to account for the curvature in the input image, while $\A_1$ is purely spatial. 
Hence, it can be beneficial to use \emph{gauge frames}, which are adapted to the data.
\begin{definition}[First Gauge Vector]\label{def:first_gauge_vector}
Let $U \in C^2(\M)$, and let $\G$ be a metric on $\M$. The first gauge vector is given by \cite[Sec.~2.4]{smets2021tvflow}
\begin{equation}\label{eq:first_gauge_vector}
\A_1^U|_p \coloneqq \underset{\substack{X_p \in T_p \M \\ \norm{X_p}_{\G} = 1}}{\argmin} \norm*{\nabla_{X_p}^{[0]} \gradient_{\G} U}_{\G}^2,
\end{equation}
where $\nabla_\cdot^{[0]}$ is the $0$ Lie-Cartan connection, which will be discussed in Sec.~\ref{sec:generalised_laplacians}.
\end{definition}
We choose $\G \coloneqq \G_\xi$, with $\G_\xi(\A_1, \A_1) = \xi^2 = \G_\xi(\A_2, \A_2)$ and $\G_\xi(\A_3, \A_3) = 1$, and $\G_\xi(\A_i, \A_j) = 0$ for $i \neq j$. The parameter $\xi$, which can be interpreted as defining how a spatial unit relates to an orientational unit, has a large influence on the fitted gauge frame. The \enquote{correct} value of $\xi$, which depends e.g. on the number of orientations in the orientation score, can be determined by using the gauge frames to compute the curvature on images for which the ground truth curvature is known; we have found $\xi = 0.1$ to be appropriate when working with $32$ orientations.
\begin{figure}
\centering
\begin{subfigure}[t]{0.9\linewidth}
\begin{tikzpicture}
\node[anchor=south west, inner sep=0] (image) at (0, 0) {\includegraphics[width=\linewidth]{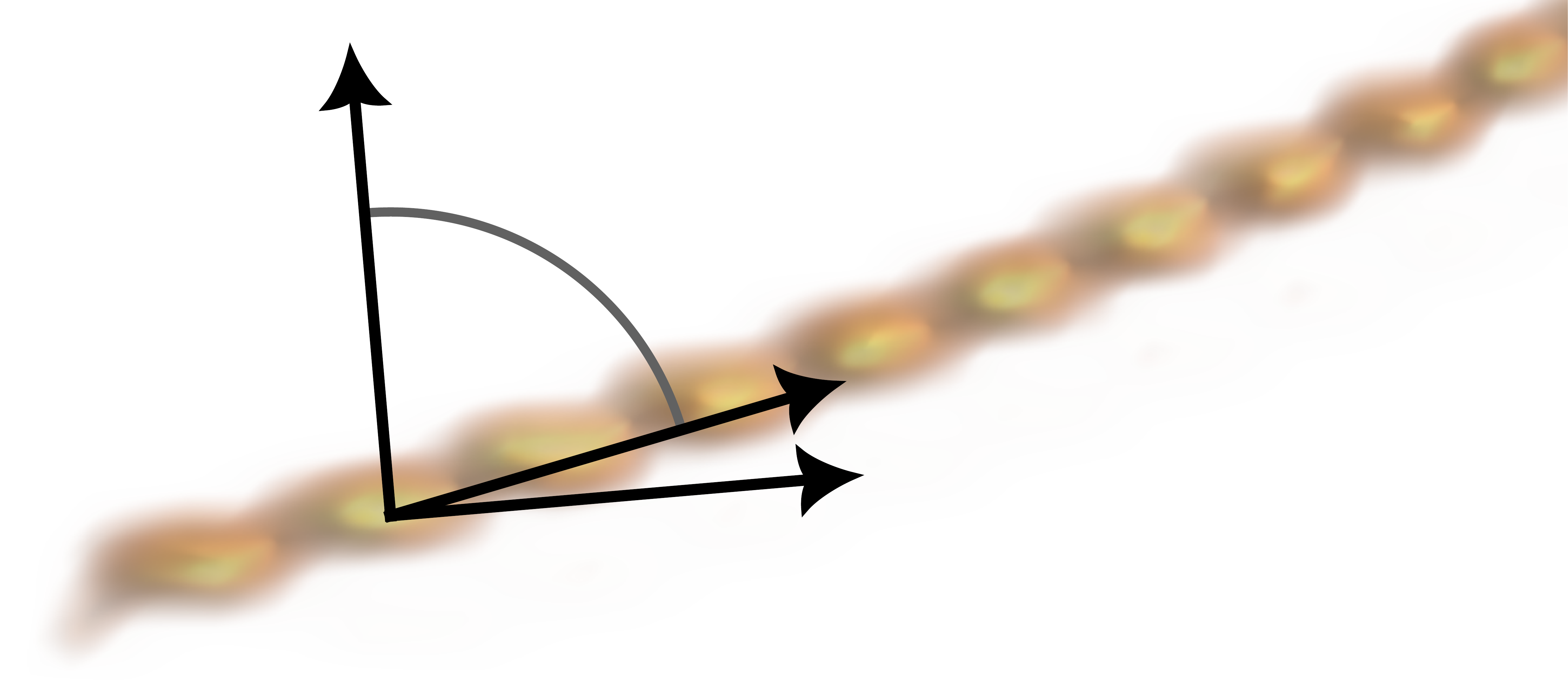}};
\begin{scope}[x={(image.south east)}, y={(image.north west)}, shift={(0.5, 0.5)}, scale=0.5]
\draw(0.2, -0.4)node{\contour{white}{$\A_1$}};
\draw(0.19, -0.08)node{\contour{white}{$\A_1^U$}};
\draw(-0.53, 0.95)node{\contour{white}{$\A_2$}};
\draw(-0.15, 0.47)node{\textcolor{blue}{$\frac{\pi}{2} - d_H$}};
\end{scope}
\end{tikzpicture}
\caption{Top view (along $\theta$-axis).}\label{fig:left_invariant_vs_gauge_top}
\end{subfigure}
~ 
\begin{subfigure}[t]{0.9\linewidth}
\begin{tikzpicture}
\node[anchor=south west, inner sep=0] (image) at (0, 0) {\includegraphics[width=\linewidth]{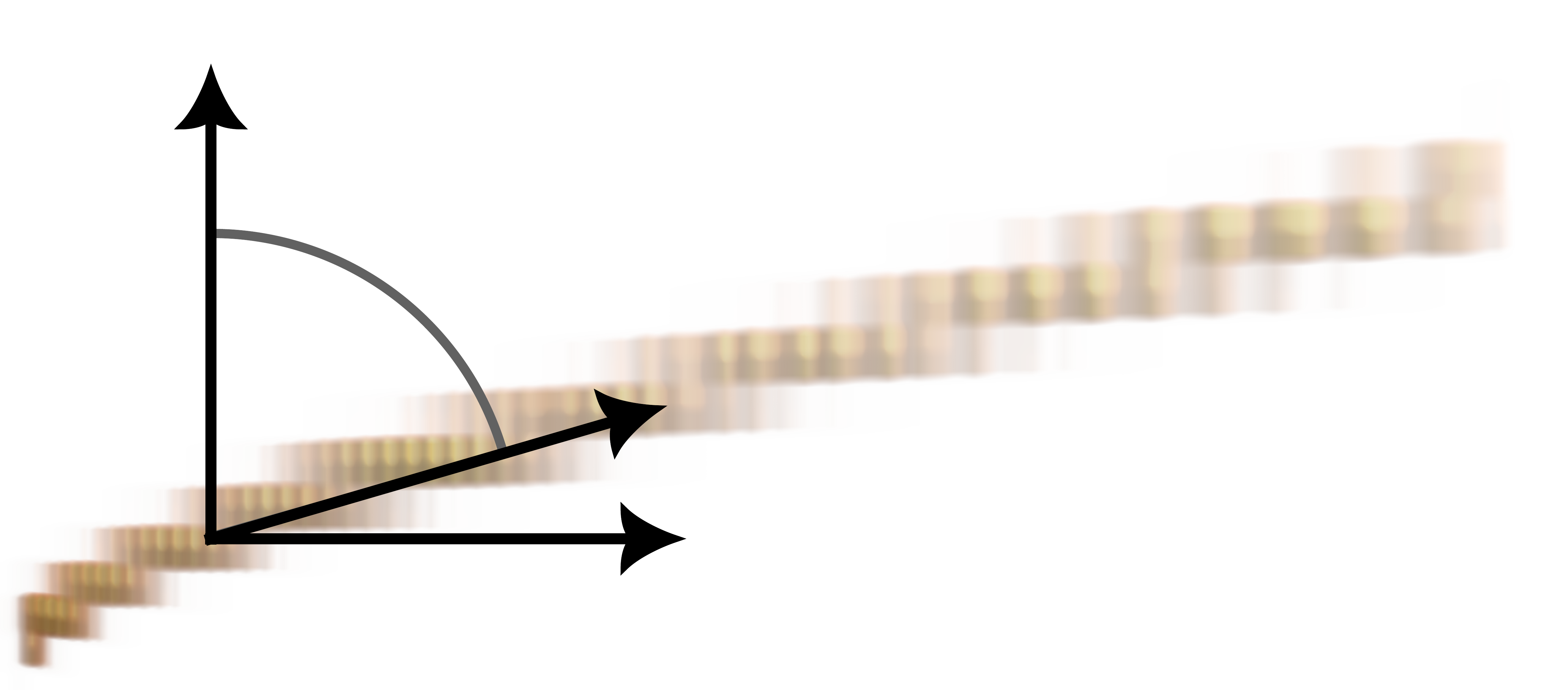}};
\begin{scope}[x={(image.south east)}, y={(image.north west)}, shift={(0.5, 0.5)}, scale=0.5]
\draw(-0.05, -0.6)node{\contour{white}{$\A_1$}};
\draw(-0.06, -0.18)node{\contour{white}{$\A_1^U$}};
\draw(-0.7, 0.95)node{\contour{white}{$\A_3$}};
\draw(-0.15, 0.35)node{\textcolor{blue}{$\frac{\pi}{2} - \arctan(\kappa)$}};
\end{scope}
\end{tikzpicture}
\caption{Side view (along $\A_2$-axis).}\label{fig:left_invariant_vs_gauge_side}
\end{subfigure}
\caption{Comparison of gauge frame and standard left-invariant frame. From the top view (a) we see that $\A_1^U$ has been rotated towards $\A_2$ to compensate for the deviation from horizontality $d_H$. From the side view (b) we see that $\A_1^U$ has been rotated towards $\A_3$; the rotation angle is related to the curvature $\kappa$.}
\label{fig:summary_gauge_frame}
\end{figure}

The other gauge vectors are then defined as follows: $\A_2^U$ is a purely spatial unit vector that is perpendicular to $\A_1^U$, and $\A_3^U$ is a unit vector perpendicular to both $\A_1^U$ and $\A_2^U$ with respect to $\G_\xi$, such that $\{\A_i^U\}_i$ is a right-handed frame. App.~\ref{app:computing_gauge_frame} describes how Eq.~\eqref{eq:first_gauge_vector} can be computed in practice to find the first gauge vector. 

Since the construction of the gauge frame \eqref{eq:first_gauge_vector} is equivariant (see Prop.~\ref{prop:equivariance_gauge_frame}), we may safely use it in our equivariant processing pipeline. If we define differential operators using a metric $\G^U$ that is constant with respect to the gauge frame, so
\begin{equation}\label{eq:data_driven_invariant_metrics}
\G^U(\A_i^U, \A_j^U) = g_{ij}^U \in \R, \textrm{ for all } i, j,
\end{equation}
then those operators will be equivariant too. For this reason, such metrics are also called \emph{data-driven invariant metrics}. We can have the same car-like intuition for diagonal data-driven invariant metrics as for invariant metrics.

Since fitting the gauge frame is computationally expensive, we only fit the gauge frame on the initial condition and this frame remains constant throughout the evolution. This is reasonable when enhancing an image as the structure of the orientation score, and consequently the gauge frame, does not change very much.

\subsection{Regularised Diffusion-Shock Filtering on \texorpdfstring{\unboldmath$\Rtwo$}{Euclidean Space}}\label{sec:r2_ds}
RDS filtering combines homogeneous diffusion with coherence-enhancing shock filtering. 
The coherence-enhancing shock filter sharpens and elongates edge-like structures by adaptively applying the morphological operations \emph{dilation} and \emph{erosion}~\cite{So04}. Dilation of a greyscale image $f: \Omega \subset \Rtwo \to \R$ replaces the grey value in location $\vec{x}$ by the supremum of $f$ within a specified neighbourhood around $\vec{x}$. Erosion uses the infimum instead. The PDE-based formulation~\cite{BM92,AGLM93,AVK93} of dilation $(+)$ / erosion ($-$) with a disk-shaped neighbourhood is given by
\begin{equation}
\partial_t u = \pm \abs{\gradient u},
\end{equation}
with the initial image $u(\vec{x}, 0) = f(\vec{x})$ and reflective boundary conditions at $\partial \Omega$, where $\abs{\,\cdot\,}$ and $\gradient$ are the Euclidean norm and gradient, respectively. 
The coherence-enhancing shock filter applies dilation when the data is concave in the direction perpendicular to the local orientation, and erosion when it is convex. 
This direction is determined by the dominant eigenvector $\vec{w}$ (i.e. the eigenvector with the largest eigenvalue) of a structure tensor~\cite{FG87} $\mat{J}_\rho(\gradient u_\sigma) = K_\rho * (\gradient u_\sigma \gradient u_\sigma^\top)$ where $u_\sigma = K_\sigma *u$ with the Gaussian convolution kernels $K_\rho, K_\sigma$. With that, the coherence-enhancing shock filter evolves an initial greyscale image $f$ by
\begin{equation}\label{eq:shock_R2}
\partial_t u = -S(\partial_{\vec{w}\vec{w}} u_\sigma) \abs{\gradient u},
\end{equation}
with initial condition $u(\vec{x}, 0) = f(\vec{x})$ and reflective boundaries.
The sigmoidal function $S: \R \to [-1, 1]$ implements the behaviour of a (soft) sign function. 

RDS filtering aims at applying this shock filter near edges, while the diffusion smooths flat areas. This adaptive behaviour is produced using a Charbonnier weight~\cite{charbonnier1997switch} $g: \R_{\geq 0} \to \R; x \mapsto \sqrt{1 + x/\lambda^2}^{-1}$ with a Gaussian-smoothed gradient magnitude $\gradient u_\nu$ as input. 
In summary, the RDS filtering PDE is given by
\begin{multline}\label{eq:ds_R2}
\partial_t u = g\left(\abs{\gradient u_\nu}^2\right) \laplace u \\
  - \Big(1 - g\left(\abs{\gradient u_\nu}^2\right)\Big)
  S\left(\partial_{\vec{w}\vec{w}} (u_\sigma) \right)
  \abs{\gradient u},
\end{multline}
with initial condition $u(\vec{x}, 0) = f(\vec{x})$ and reflective boundary conditions. 

For the application to digital images, the PDE can be discretised with an explicit scheme, that inherits a maximum-minimum principle which avoids under/over shoots. Diffusion is discretised with central differences, the morphological terms require an upwind scheme \cite{rouy1992viscosity}. For details see~\cite{SW23,schaefer2024regularisedds}. 

\section{Regularised Diffusion-Shock Filtering on \texorpdfstring{\unboldmath$\M$}{the Space of Positions \& Orientations}}\label{sec:m2_ds}
In this section, we discuss how to extend RDS filtering from $\Rtwo$ to $\M$. We start by investigating how to generalise diffusion. Thereafter, we describe our PDE scheme, and prove its stability. Finally, we test the denoising and inpainting capabilities of our novel scheme.

\subsection{Generalised Laplacians}\label{sec:generalised_laplacians}
\begin{remark}
We use Einstein summation convention in this section for concise  expressions. 
\end{remark}
On a Riemannian manifold $(M, \G)$, one often defines diffusion as the evolution generated by the Laplace-Beltrami operator $\laplace_\G \coloneqq \divergence_\G \after \gradient_\G$, where $\divergence_\G$ is the Riemannian divergence induced by the Riemannian volume form. It turns out that for any smooth vector field $X \in \sections(T M)$ it holds that $\divergence_\G(X) = \trace(\nabla_{\cdot}^\mathrm{LC} X)$, where $\nabla^\mathrm{LC}$ is the Levi-Civita connection corresponding to $\G$, see App.~\ref{app:divergence_connection}. This allows us to define a generalised notion of Laplace operator, where we replace the Levi-Civita connection with some other affine connection $\nabla$: 
\begin{definition}[Generalised Laplacian]\label{def:generalised_laplacian}
Let $(M, \G)$ be a Riemannian manifold, and $\nabla$ an affine connection thereon. Then we define the corresponding \emph{generalised Laplacian} as
\begin{equation}\label{eq:general_connection_laplacian}
\laplace_{\G, \nabla} \coloneqq \divergence_\nabla \after \gradient_\G \coloneqq \trace(\nabla_{\cdot} \gradient_\G).
\end{equation}
\end{definition}
These Laplace operators clearly generalise the Laplace-Beltrami operator, and could be interesting on manifolds which have natural connections that are \emph{not} the Levi-Civita connection.  In particular, on Lie groups there exists a family of canonical connections called the \emph{Lie-Cartan connections}.
\begin{definition}[Lie-Cartan Connection]\label{def:lie_cartan_connection}
Let $G$ be a Lie group and let $\nu \in \R$. Then the $\nu$ \emph{Lie-Cartan connection} $\nabla^{[\nu]}$ is the affine connection such that
\begin{equation}\label{eq:lie_cartan_connection}
\nabla^{[\nu]}_X Y = \nu [X, Y]
\end{equation}
for any invariant $X, Y \in \mathfrak{X}(G)$.
\end{definition}
\begin{remark}\label{rem:uniqueness_lie_cartan_connection}
The uniqueness of the above definition comes from the defining properties of affine connections, namely (1) $C^\infty(G)$-linearity in the first slot and (2) Leibniz rule in the second slot. Using these properties, we can express the $\nu$ Lie-Cartan connection with respect to an (arbitrary) invariant frame $\{\A_i\}_i$:
\begin{equation*}
\begin{split}
\nabla^{[\nu]}_X Y & = \nabla^{[\nu]}_{(X^i \A_i)} (Y^j \A_j) \overset{(1)}{=} X^i (\nabla^{[\nu]}_{\A_i} (Y^j \A_j)) \\
& \overset{(2)}{=} X^i (\A_i Y^j) \A_j + X^i Y^j \nabla^{[\nu]}_{\A_i} \A_j \\
& = X^i (\A_i Y^j) \A_j + \nu X^i Y^j [\A_i, \A_j],
\end{split}
\end{equation*}
for any (not necessarily invariant) $X, Y \in \sections(T G)$.
\end{remark}
Lie-Cartan connections have nice properties, such as the fact that their geodesics are exactly the exponential curves of $G$ \cite[Def.~2]{duits2021cartanconnection}. Of particular interest is $\nabla^{[0]}$, since this is the only one that is metric compatible with every invariant metric on $G$ \cite[Cor.~2]{duits2021cartanconnection}. We will now express the Laplace operators induced by Lie-Cartan connections and invariant metrics -- which we call \emph{Lie-Cartan Laplacians} -- and compare these with the Laplace-Beltrami operator. For readability, we write $\laplace_{\G, \nu} \coloneqq \laplace_{\G, \nabla^{[\nu]}}$. 
\begin{theorem}[Lie-Cartan Laplacians]\label{thm:left_invariant_lie_cartan_laplacian}
Let $G$ be a connected Lie group, let $\G$ be an invariant metric thereon, and let $\nu \in \R$. With respect to an invariant frame $\{\A_i\}_i$, the Lie-Cartan Laplacian is given by
\begin{equation}\label{eq:lie_cartan_laplacian_formula}
\laplace_{\G, \nu} = g^{ij} \A_i \A_j + \nu c_{ki}^k g^{ij} \A_j,
\end{equation}
while the Laplace-Beltrami operator is given by
\begin{equation}\label{eq:laplace_beltrami_formula}
\begin{split}
\laplace_\G & = g^{ij} \A_i \A_j + \Gamma_{ki}^k g^{ij} \A_j \\
& = g^{ij} \A_i \A_j + c_{ki}^k g^{ij} \A_j,
\end{split}
\end{equation}
with $\Gamma_{ij}^k$ the Christoffel symbols given by $\Gamma_{ij}^k \A_k = \nabla_{\A_i}^\mathrm{LC} \A_j$ and $c_{ij}^k$ the structure constants defined by $c_{ij}^k \A_k = [\A_i, \A_j]$. The difference is:
\begin{equation}\label{eq:difference_lie_cartan_laplacian_laplace_beltrami}
\laplace_\G - \laplace_{\G, \nu} = (1 - \nu) c_{ki}^k g^{ij} \A_j.
\end{equation}
\end{theorem}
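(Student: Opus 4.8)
The plan is to compute all three operators explicitly in the invariant frame $\{\A_i\}_i$, using crucially that for an invariant metric both the components $g_{ij} \coloneqq \mathcal{G}(\A_i,\A_j)$ and the structure constants $c_{ij}^k$ are \emph{constant} functions on $G$. First I would compute the gradient: the defining relation $\mathcal{G}(\gradient_\mathcal{G} f, X) = Xf$ for all $X$, tested against $\A_i$ with the ansatz $\gradient_\mathcal{G} f = (\gradient_\mathcal{G} f)^j\A_j$, gives $g_{ij}(\gradient_\mathcal{G} f)^j = \A_i f$, hence $\gradient_\mathcal{G} f = g^{ij}(\A_i f)\A_j$ with $(g^{ij})$ the inverse matrix.

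Next I would substitute this into $\laplace_{\mathcal{G},\nu} = \trace(\nabla^{[\nu]}_{\cdot}\, \gradient_\mathcal{G} f)$, applying the coordinate expression for $\nabla^{[\nu]}$ from Remark~\ref{rem:uniqueness_lie_cartan_connection}. Since $g^{jk}$ is constant, $\nabla^{[\nu]}_{\A_i}\bigl(g^{jk}(\A_k f)\A_j\bigr) = g^{jk}(\A_i\A_k f)\A_j + \nu\, g^{jk}(\A_k f)\,c_{ij}^l\A_l$; contracting the derivative index $i$ against the output-component index and relabelling the dummy indices yields $\laplace_{\mathcal{G},\nu} = g^{ij}\A_i\A_j + \nu\, c_{ki}^k g^{ij}\A_j$, which is \eqref{eq:lie_cartan_laplacian_formula}.

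For the Laplace-Beltrami operator I would invoke the identity $\laplace_\mathcal{G} = \trace(\nabla^{\mathrm{LC}}_{\cdot}\,\gradient_\mathcal{G})$ recalled in App.~\ref{app:divergence_connection}, write $\nabla^{\mathrm{LC}}_{\A_i}\A_j = \Gamma_{ij}^k\A_k$ (constants again, being built from the $g_{ij}$ and $c_{ij}^k$), and run the identical trace computation to obtain the first line of \eqref{eq:laplace_beltrami_formula}. The main work is then the identity $\Gamma_{ki}^k = c_{ki}^k$: feeding $X = \A_k$, $Y = \A_i$, $Z = \A_l$ into the Koszul formula, the three metric-derivative terms vanish by constancy of the $g_{ij}$, leaving $2\Gamma_{ki}^m g_{ml} = -c_{il}^m g_{km} + c_{lk}^m g_{im} + c_{ki}^m g_{lm}$. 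Contracting with $g^{kl}$, the left side becomes $2\Gamma_{ki}^k$, the middle term on the right drops out since $c_{lk}^m g^{kl} = 0$ (an antisymmetric tensor contracted with a symmetric one), and the antisymmetry $c_{ab}^c = -c_{ba}^c$ makes the two remaining terms both equal to $c_{ki}^k$. This gives $\Gamma_{ki}^k = c_{ki}^k$, hence the second line of \eqref{eq:laplace_beltrami_formula}. The step I expect to require the most care is exactly this Koszul computation: fixing the sign convention in the formula and correctly tracking which relabellings of the contracted structure constants flip a sign.

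Finally, \eqref{eq:difference_lie_cartan_laplacian_laplace_beltrami} is immediate by subtracting \eqref{eq:lie_cartan_laplacian_formula} from the second line of \eqref{eq:laplace_beltrami_formula}. (In particular, for unimodular $G$ such as $\SE(2)$ one has $c_{ki}^k = 0$, so all Lie-Cartan Laplacians coincide with the Laplace-Beltrami operator.)
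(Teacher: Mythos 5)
Your proposal is correct and follows essentially the same route as the paper: express the gradient in the invariant frame, take the trace of its covariant derivative for both connections, and reduce the Laplace--Beltrami case to the identity $\Gamma_{ki}^k = c_{ki}^k$ using invariance of the metric components and the symmetric--antisymmetric cancellation $g^{jm}c_{jm}^p = 0$. The only cosmetic difference is that you derive the contracted Christoffel symbols directly from the Koszul formula, whereas the paper quotes the closed-form frame expression for $\Gamma_{ij}^k$ from \cite[Cor~5.11]{lee2018riemannian} and then simplifies; the two computations are equivalent.
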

\begin{proof}
We will start by deriving an expression for the divergences $\trace(\nabla_\cdot^{[\nu], U} X)$ and $\trace(\nabla_\cdot^\mathrm{LC} X)$ for all $X \in \sections(TG)$; the result is then achieved by substituting $X = \gradient_\G f$ for all $f \in C^\infty(G)$. We will express both sides of this equation in terms of the invariant frame $\{\A_i\}_i$. Let $\{\omega^i\}_i$ be the dual frame of $\{\A_i\}_i$. We first derive the divergence in the Lie-Cartan case:
\begin{equation*}
\begin{split}
\trace(\nabla_\cdot^{[\nu]} X) & \coloneqq \langle \omega^i, \nabla_{\A_i}^{[\nu]} (X^j \A_j) \rangle \\
& = \langle \omega^i, (\A_i X^j) \A_j + X^j \nabla_{\A_i}^{[\nu]} \A_j \rangle \\
& = \A_i X^i + \nu X^j \langle \omega^i, [\A_i, \A_j] \rangle \\
& = \A_i X^i + \nu X^j c_{ij}^j = (\A_i + \nu c_{ji}^j) X^i,
\end{split}
\end{equation*}
from which it follows that
\begin{equation*}
\laplace_{\G, \nu} = (\A_i + \nu c_{ji}^j) g^{ik} \A_k = g^{ij} \A_i \A_j + \nu c_{ki}^k g^{ij} \A_j.
\end{equation*}
Next, we derive in the Levi-Civita case:
\begin{equation*}
\begin{split}
\trace(\nabla_\cdot^\mathrm{LC} X) & \coloneqq \langle \omega^i, \nabla_{\A_i}^\mathrm{LC} (X^j \A_j) \rangle \\
& = \langle \omega^i, (\A_i X^j) \A_j + X^j \nabla_{\A_i}^\mathrm{LC} \A_j \rangle \\
& = \A_i X^i + \langle \omega^i, X^j \Gamma_{ij}^k \A_k \rangle \\
& = \A_i X^i + \Gamma_{ij}^i X^j = (\A_i + \Gamma_{ji}^j) X^i,
\end{split}
\end{equation*}
where $\Gamma_{ij}^k \coloneqq \langle \omega^k, \nabla_{\A_i}^\mathrm{LC} \A_j \rangle$ are the Christoffel symbols with respect to $\{\A_i\}_i$, which are given by \cite[Cor~5.11]{lee2018riemannian}
\begin{multline*}
\Gamma_{ij}^k = \frac{1}{2} g^{km} (\A_i g_{mj} + \A_j g_{mi} - \A_m g_{ij} \\ - g_{ip} c_{jm}^p - g_{jp} c_{im}^p + g_{mp} c_{ij}^p).
\end{multline*}
Since the metric is invariant, the derivatives of its components vanish, whence
\begin{equation*}
\Gamma_{ij}^k = -\frac{1}{2} g^{km} (g_{ip} c_{jm}^p + g_{jp} c_{im}^p - g_{mp} c_{ij}^p)
\end{equation*}
The relevant components are hence
\begin{equation*}
\begin{split}
\Gamma_{ji}^j & = -\frac{1}{2} g^{jm} (g_{jp} c_{im}^p + g_{ip} c_{jm}^p - g_{mp} c_{ji}^p) \\
& = -\frac{1}{2} (\delta_p^m c_{im}^p + g_{ip} g^{jm} c_{jm}^p - g_p^j c_{ji}^p) \\
& = \frac{1}{2} (c_{mi}^m + c_{ji}^j - g_{ip} g^{jm} c_{jm}^p).
\end{split}
\end{equation*}
Using the symmetry of the components of the (dual) metric tensor and the antisymmetry of the structure constants we find that
\begin{equation*}
\Gamma_{ji}^j = \frac{1}{2} (c_{mi}^m + c_{ji}^j - g_{ip} g^{jm} c_{jm}^p) = \frac{1}{2} (c_{mi}^m + c_{ji}^j) = c_{ji}^j.
\end{equation*}
We have therefore found that
\begin{equation*}
\trace(\nabla_\cdot^\mathrm{LC} X) = (\A_i + c_{ji}^j) X^i.
\end{equation*}
Substituting in $\gradient_\G$, we find
\begin{equation*}
\laplace_\G = (\A_i + c_{ji}^j) g^{ik} \A_k = g^{ij} \A_i \A_j + c_{ki}^k g^{ij} \A_j.
\end{equation*}
Thereby the difference of the Laplace-Beltrami operator and the Lie-Cartan Laplacian is
\begin{equation*}
\laplace_\G - \laplace_{\G, \nu} = (1 - \nu) c_{ki}^k g^{ij} \A_j,
\end{equation*}
as required.
\end{proof}
Equation~\eqref{eq:difference_lie_cartan_laplacian_laplace_beltrami} tells us that the Laplace-Beltrami operator $\laplace_\G$ and the $1$ Lie-Cartan Laplacian $\laplace_{\G, 1}$ coincide. The operators $\laplace_\G$ and $\laplace_{\G, \nu}$ coincide for all $\nu \in \R$ if the trace of the structure constants $c_{ki}^k$ vanishes, which happens if and only if $G$ is unimodular \cite{Varadarajan1984LieRepresentations}.
Hence, for unimodular Lie groups we can simplify Equations~\eqref{eq:lie_cartan_laplacian_formula} and \eqref{eq:laplace_beltrami_formula}:
\begin{corollary}\label{cor:left_invariant_lie_cartan_laplacian}
Let $G$ be a connected unimodular Lie group and $\G$ a left-invariant metric thereon, and let $\nu \in \R$. Then the Lie-Cartan Laplacian and Laplace-Beltrami operator coincide:
\begin{equation}\label{eq:unimodular_difference_lie_cartan_laplacian_laplace_beltrami}
\laplace_{\G, \nu} = g^{ij} \A_i \A_j = \laplace_\G.
\end{equation}
\end{corollary}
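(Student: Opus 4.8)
The plan is to read this off directly from Theorem~\ref{thm:left_invariant_lie_cartan_laplacian}, so the only genuine ingredient needed is the characterisation of unimodularity in terms of the structure constants. Recall that a connected Lie group $G$ is unimodular precisely when $\trace(\mathrm{ad}_X) = 0$ for every $X$ in its Lie algebra \cite{Varadarajan1984LieRepresentations}. I would write this out in the invariant frame $\{\A_i\}_i$: using $\mathrm{ad}_{\A_i} \A_j = [\A_i, \A_j] = c_{ij}^k \A_k$, the matrix of $\mathrm{ad}_{\A_i}$ in the basis $\{\A_j\}_j$ has $(k,j)$-entry $c_{ij}^k$, so the condition $\trace(\mathrm{ad}_{\A_i}) = 0$ reads $c_{ij}^j = 0$, which by the antisymmetry of the structure constants is the same as $c_{ki}^k = 0$ for every index $i$.

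Given this, the conclusion is immediate. Substituting $c_{ki}^k = 0$ into Eq.~\eqref{eq:lie_cartan_laplacian_formula} annihilates the first-order correction term $\nu c_{ki}^k g^{ij} \A_j$, leaving $\laplace_{\mathcal{G}, \nu} = g^{ij} \A_i \A_j$ for every $\nu \in \R$; performing the same substitution in Eq.~\eqref{eq:laplace_beltrami_formula} gives $\laplace_\mathcal{G} = g^{ij} \A_i \A_j$. Equivalently, one may simply note that the right-hand side $(1 - \nu) c_{ki}^k g^{ij} \A_j$ of Eq.~\eqref{eq:difference_lie_cartan_laplacian_laplace_beltrami} vanishes, so $\laplace_\mathcal{G} - \laplace_{\mathcal{G}, \nu} = 0$.

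There is essentially no obstacle here — the corollary is a one-line consequence of the preceding theorem — so the only points that warrant care are matching the index conventions when translating $\trace(\mathrm{ad}) = 0$ into $c_{ki}^k = 0$, and observing that the conclusion holds uniformly in $\nu$ precisely because the factor $(1 - \nu)$ multiplies a quantity that is identically zero on a unimodular group.
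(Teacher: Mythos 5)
Your proposal is correct and matches the paper's own (implicit) argument: the text preceding the corollary notes that $c_{ki}^k$ vanishes if and only if $G$ is unimodular, and then substitutes this into Eqs.~\eqref{eq:lie_cartan_laplacian_formula}--\eqref{eq:difference_lie_cartan_laplacian_laplace_beltrami}, exactly as you do. Your explicit unpacking of $\trace(\mathrm{ad}_{\A_i}) = c_{ij}^j = 0$ and the antisymmetry step is just a more detailed write-up of the same one-line deduction.
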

Many Lie groups -- including $\SE(2)$ and $\Rtwo$ -- are unimodular.\footnote{On $\Rtwo$, all structure constants vanish; on $\SE(2)$, the nonzero structure constants are $-1 = c_{13}^2 = -c_{31}^2 = - c_{23}^1 = c_{32}^1$, none of which contribute to the trace of the adjoint.} As such, Cor.~\ref{cor:left_invariant_lie_cartan_laplacian} carries over onto their principal homogeneous spaces $\M$ and $\Rtwo$. 

The theory of Lie-Cartan connections can be generalised for gauge frames:
\begin{definition}[Gauge Frame Lie-Cartan Connection]\label{def:data_driven_lie_cartan_connection}
Let $G$ be a Lie group, with gauge frame $\{\A_i^U\}_i$, and let $\nu \in \R$. Then the $\nu$ \emph{gauge frame Lie-Cartan connection} $\nabla^{[\nu], U}$ is the affine connection such that
\begin{equation}\label{eq:data_driven_lie_cartan_connection}
\nabla^{[\nu], U}_X Y = \nu [X, Y]
\end{equation}
for any $X, Y \in \sections(T G)$ that are constant with respect to $\{\A_i^U\}_i$.
\end{definition}
We call the corresponding Laplacians \emph{data-driven Lie-Cartan Laplacians}, and write $\laplace_{\G, \nu}^U \coloneqq \laplace_{\G, \nabla^{[\nu], U}}$.
\begin{theorem}[Data-Driven Lie-Cartan Laplacians]\label{thm:data_driven_lie_cartan_laplacian}
Let $G$ be a Lie group, with gauge frame $\{\A_i^U\}_i$, let $\G$ be a data-driven invariant metric thereon, and let $\nu \in \R$. With respect to the gauge frame, the data-driven Lie-Cartan Laplacian is given by
\begin{equation}\label{eq:data_driven_lie_cartan_laplacian_formula}
\laplace_{\G, \nu}^U = g^{ij} \A_i^U \A_j^U + \nu d_{ki}^k g^{ij} \A_j^U,
\end{equation}
while the Laplace-Beltrami operator is given by
\begin{equation}\label{eq:data_driven_laplace_beltrami_formula}
\begin{split}
\laplace_\G & = g^{ij} \A_i^U \A_j^U + \Gamma_{ki}^k g^{ij} \A_j^U \\
& = g^{ij} \A_i^U \A_j^U + d_{ki}^k g^{ij} \A_j^U,
\end{split}
\end{equation}
with $d_{ij}^k $ the structure functions defined by $d_{ij}^k \A_k^U = [\A_i^U, \A_j^U]$. The difference is: 
\begin{equation}\label{eq:difference_data_driven_lie_cartan_laplacian_laplace_beltrami}
\laplace_\G - \laplace_{\G, \nu}^U = (1 - \nu) d_{ki}^k g^{ij} \A_j^U,
\end{equation}
so the two coincide if and only if $\nu = 1$.\footnote{In the gauge frame case, structure \emph{constants} $c_{ij}^k$ are replaced by structure \emph{functions} $d_{ij}^k$ which do not have a closed form and have non-vanishing trace in general.}
\end{theorem}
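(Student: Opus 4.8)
The plan is to follow the proof of Thm.~\ref{thm:left_invariant_lie_cartan_laplacian} almost line for line, replacing the invariant frame $\{\A_i\}_i$ by the gauge frame $\{\A_i^U\}_i$ and the structure constants $c_{ij}^k$ by the structure functions $d_{ij}^k$. As before, I would first compute the two divergences $\trace(\nabla^{[\nu], U}_{\cdot} X)$ and $\trace(\nabla^{\mathrm{LC}}_{\cdot} X)$ for an arbitrary $X \in \sections(TG)$, written in the gauge frame with dual coframe $\{\omega^i\}_i$, and only substitute $X = \gradient_\mathcal{G} f$ at the end. Two observations make this substitution legitimate in the gauge frame: the gradient still has components $(\gradient_\mathcal{G} f)^k = g^{ik}\,\A_i^U f$, which follows solely from the defining identity $\mathcal{G}(\gradient_\mathcal{G} f, \cdot) = \diff f$ and invertibility of $(g_{ij})$; and, since $\mathcal{G}$ is a data-driven invariant metric, its components $g_{ij}$ -- and hence the $g^{ij}$ -- are constant along the gauge frame, so $\A_i^U g_{jk} = \A_i^U g^{jk} = 0$.

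For the Lie-Cartan part, $C^\infty(G)$-linearity in the first slot and the Leibniz rule in the second (exactly as in Rem.~\ref{rem:uniqueness_lie_cartan_connection}, which applies verbatim to $\nabla^{[\nu],U}$ because $\{\A_i^U\}_i$ is a frame) together with the defining relation $\nabla^{[\nu],U}_{\A_i^U}\A_j^U = \nu[\A_i^U,\A_j^U] = \nu\, d_{ij}^k\,\A_k^U$ give $\trace(\nabla^{[\nu],U}_{\cdot} X) = (\A_i^U + \nu\, d_{ji}^j)\,X^i$; substituting the gradient and using $\A_i^U g^{jk} = 0$ then yields Eq.~\eqref{eq:data_driven_lie_cartan_laplacian_formula}. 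For the Levi-Civita part, I would invoke the Koszul formula for the Christoffel symbols of a \emph{non-holonomic} frame, which is precisely \cite[Cor.~5.11]{lee2018riemannian} with the structure functions $d_{ij}^k$ in place of the structure constants; the metric-derivative terms drop out by data-driven invariance, leaving $\Gamma_{ij}^k = -\frac{1}{2}g^{km}(g_{ip}d_{jm}^p + g_{jp}d_{im}^p - g_{mp}d_{ij}^p)$. Contracting as in the invariant case gives $\Gamma_{ji}^j = \frac{1}{2}(d_{mi}^m + d_{ji}^j - g_{ip}g^{jm}d_{jm}^p) = d_{ji}^j$, where the last term vanishes because $g^{jm}$ is symmetric and $d_{jm}^p$ is antisymmetric in $j,m$ -- a pointwise algebraic identity that does not require $d$ to be constant. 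Substituting the gradient gives Eq.~\eqref{eq:data_driven_laplace_beltrami_formula}, and subtracting the two expressions yields Eq.~\eqref{eq:difference_data_driven_lie_cartan_laplacian_laplace_beltrami}. Finally, since the $\{\A_j^U\}_j$ are pointwise independent and $(g^{ij})$ is invertible, the first-order operator $(1-\nu)\,d_{ki}^k\,g^{ij}\,\A_j^U$ is the zero operator if and only if $(1-\nu)\,d_{ki}^k \equiv 0$; as the trace $d_{ki}^k$ of the structure functions does not vanish identically for a genuine (non-invariant) gauge frame -- in contrast to the unimodular setting of Cor.~\ref{cor:left_invariant_lie_cartan_laplacian}, where $c_{ki}^k = 0$ -- this happens precisely when $\nu = 1$.

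The only place where this is not a literal copy of Thm.~\ref{thm:left_invariant_lie_cartan_laplacian}, and hence the main point to be careful about, is that the $d_{ij}^k$ are genuine functions on $G$ rather than constants. One therefore has to check that the cancellations used in the invariant proof relied only on (i) the pointwise algebraic antisymmetry of the structure functions together with the symmetry of the (inverse) metric components, and (ii) the vanishing of $\A_i^U g_{jk}$ coming from data-driven invariance of $\mathcal{G}$ -- and not on $c_{ij}^k$ being constant. Both of these survive the passage to gauge frames, so I expect no further obstruction.
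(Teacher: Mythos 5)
Your proposal is correct and follows exactly the route the paper takes: the paper's proof of this theorem is literally ``substitute $\A_i \to \A_i^U$, $\mathcal{G} \to \mathcal{G}^U$, $c_{ij}^k \to d_{ij}^k$ in the proof of Thm.~\ref{thm:left_invariant_lie_cartan_laplacian}.'' Your explicit verification that the cancellations rely only on pointwise antisymmetry of the structure functions and on $\A_i^U g_{jk} = 0$ (rather than on constancy of $c_{ij}^k$) is exactly the check the paper leaves implicit, and your caveat that ``if and only if $\nu = 1$'' presupposes $d_{ki}^k \not\equiv 0$ matches the paper's own footnote.
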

\begin{proof}
The proof is analogous to that of Theorem~\ref{thm:left_invariant_lie_cartan_laplacian}: simply substitute $\A_i \to \A_i^U$, $\G \to \G^U$, and $c^k_{ij} \to d^k_{ij}$, and the result follows.
\end{proof}

Hence, for most $\nu \in \R$, the Laplace-Beltrami operator and the $\nu$ data-driven Lie-Cartan Laplacian will differ. We choose to use the $0$ data-driven Lie-Cartan Laplacian: 
\begin{equation*}
\begin{split}
\laplace & \coloneqq g^{11} (\A_1^U)^2 + g^{22} (\A_2^U)^2 + g^{33} (\A_3^U)^2 \\
& \equiv \laplace_{\G, 0}^U \neq \laplace_\G,
\end{split}
\end{equation*}
to generate the diffusion for gauge frame RDS filtering. One reason for this choice is that $\nabla^{[0], U}$ is the only gauge frame Lie-Cartan connection that is metric compatible with any data-driven invariant metric (App.~\ref{app:properties_lie_cartan}). 
It moreover has a number of computational advantages compared to the Laplace-Beltrami operator: (1) it does not depend on the structure functions, and (2) all derivatives are of second order, whereas the Laplace-Beltrami operator has both second and first order derivatives. 
This data-driven Laplacian has been used in the past, e.g. \cite{franken2009cedos}. Similarly, the gauge frame TR-TV flow from \cite{smets2021tvflow} implicitly uses a divergence induced by $\nabla^{[0], U}$. 

One expects diffusion to be well-behaved, e.g. smoothing. In fact, diffusions generated by the Laplace-Beltrami operator are even analytic; equivalently, we say that the Laplace-Beltrami operator is sectorial:
\begin{definition}[Sectorial Operators]\label{def:sectoriality}
Operator $(A, D(A))$ is called \emph{sectorial} if $A$ generates an analytic semigroup \cite[Thm.~II.4.6]{engel2000analyticsemigroup}.
\end{definition}
For completeness, we provide a short proof of the sectoriality of the Laplace-Beltrami operator.
\begin{remark}
In the following results, we will be working with function spaces defined with respect to the measure $\mu_\G$ induced by the Riemannian metric, which will differ from the Haar measure when $\G$ is not invariant. We simplify notation by omitting the measure in the function space notation (e.g. $\Ltwo(G, \mu_\G) \to \Ltwo(G)$). The sectoriality can be carried over to the Haar measure case if desired, since the two measures are absolutely continuous with respect to each other.\footnote{We relate the operator with the Riemannian measure $\mu_\G$ to the one with the Haar measure $\mu$ using unitary $\sqrt{\diff \mu_\G / \diff \mu}$, which preserves sectoriality.} Additionally, we use $H^2(G)$ to denote the Sobolev space of functions on $G$ with square integrable weak derivatives to second order \cite{Hebey2006SobolevManifolds}, which will turn out to be the domain of the Laplacians.
\end{remark}
\begin{lemma}[Laplace-Beltrami Operator Sectoriality]\label{lem:lbo_sectorial}
Let $G$ be a geodesically complete manifold with respect to Riemannian metric $\G$. Then the Laplace-Beltrami operator $(\laplace_\G, H^2(G))$ is sectorial.
\end{lemma}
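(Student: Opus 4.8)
The plan is to obtain sectoriality of $\laplace_\mathcal{G}$ from two classical facts: that on a geodesically complete Riemannian manifold $\laplace_\mathcal{G}$ is self-adjoint, and that a self-adjoint operator which is bounded above generates a (bounded) analytic semigroup. First I would note that the symmetric operator $(\laplace_\mathcal{G}, C_c^\infty(G))$ is essentially self-adjoint precisely because $(G, \mathcal{G})$ is geodesically, hence metrically, complete (the classical Gaffney--Chernoff--Strichartz theorem); consequently its closure is the unique self-adjoint extension, and by interior elliptic regularity this closure acts on $H^2(G)$, i.e. it is exactly the operator in the statement, with $C_c^\infty(G)$ a core. Completeness also legitimises integration by parts on the core, giving $\langle \laplace_\mathcal{G} f, f \rangle_{\Ltwo(G)} = - \norm{\gradient_\mathcal{G} f}_{\Ltwo(G)}^2 \le 0$ for all $f$ in the domain; hence the numerical range, and therefore the spectrum, of $\laplace_\mathcal{G}$ is contained in $(-\infty, 0]$.

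The second step is a direct application of the spectral theorem. For a self-adjoint operator $A$ with $\sigma(A) \subseteq (-\infty, 0]$, the Borel functional calculus yields the resolvent bound $\norm{(\lambda I - A)^{-1}} \le \operatorname{dist}(\lambda, (-\infty, 0])^{-1}$ for every $\lambda \in \mathbb{C} \setminus (-\infty, 0]$; in particular, on any proper sector around the positive real axis one has $\norm{(\lambda I - A)^{-1}} \le M_\varphi / |\lambda|$ with $M_\varphi$ depending only on the opening angle. This is exactly the resolvent estimate that characterises a sectorial operator in the sense of Def.~\ref{def:sectoriality}, so $A$ generates an analytic semigroup; see \cite[Thm.~II.4.6]{engel2000analyticsemigroup}. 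Taking $A = \laplace_\mathcal{G}$ concludes the proof, the associated analytic semigroup being the heat semigroup $(e^{t \laplace_\mathcal{G}})_{t \ge 0}$, which extends holomorphically to the open right half-plane.

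The only point that needs a little care is the identification of the maximal domain of $\laplace_\mathcal{G}$ with $H^2(G)$: on a general complete manifold this requires some uniform control of the geometry to upgrade the graph norm of $\laplace_\mathcal{G}$ to the full $H^2$-norm. For the manifolds relevant here this is not a genuine obstacle, since an invariant (hence homogeneous) metric on a Lie group automatically has bounded geometry, so local elliptic estimates combine with homogeneity to yield the $H^2$-identification; alternatively, one may simply define $H^2(G)$ as the domain of the self-adjoint closure, in which case this step is vacuous. Everything else is standard spectral theory, so once completeness has been used to secure essential self-adjointness there is nothing left to do.
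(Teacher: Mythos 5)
Your proof is correct and follows essentially the same route as the paper's: negativity and symmetry on $C_c^\infty(G)$ via Green's identities, essential self-adjointness from geodesic completeness (Chernoff), spectrum in $(-\infty,0]$, and then sectoriality of a self-adjoint operator with non-positive spectrum (the paper cites \cite[Cor.~III.4.7]{engel2000analyticsemigroup} where you spell out the functional-calculus resolvent bound, which is the same argument). Your extra remark on identifying the domain of the closure with $H^2(G)$ is a point the paper passes over silently, and is a welcome clarification rather than a deviation.
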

\begin{proof}
By Green's identities, the Laplace-Beltrami operator is symmetric and negative on $C_c^\infty(G)$, since
\begin{align*}
(\laplace_\G f, g) & = \int_G \laplace_\G f(p) g(p) \diff \mu_\G(p) \\ 
& = -\int_G \G(\gradient_\G f(p), \gradient_\G g(p)) \diff \mu_\G(p) \\
& = \int_G f(p) \laplace_\G g(p) \diff \mu_\G(p) = (f, \laplace_\G g), \textrm{ and} \\
(\laplace_\G f, f) & = \int_G \laplace_\G f(p) f(p) \diff \mu_\G(p) \\
& = -\int_G \norm{\gradient_\G f(p)}_\G^2 \diff \mu_\G(p) \leq 0,
\end{align*}
for any $f, g \in C_c^\infty(G)$. Since $G$ is geodesically complete, it follows that $(\laplace_\G, C_c^\infty(G))$ is essentially self-adjoint \cite[Thm~2.2]{Chernoff1973EssentialEquations}, so that its closure $(\laplace_\G, H^2(G))$ is self-adjoint. Additionally, since negativity implies dissipativity, $(\laplace_\G, H^2(G))$ is dissipative \cite[Prop.~VI.3.14]{engel2000analyticsemigroup}. 
Consequently, $(\laplace_\G, H^2(G))$ is normal, and we have for its spectrum $\sigma(\laplace_\G, H^2(G)) \subset (-\infty, 0]$; it then follows that the Laplace-Beltrami operator is sectorial \cite[Cor.~III.4.7]{engel2000analyticsemigroup}.
\end{proof}
Hence, these evolutions are smoothing, so for all $t > 0$ and $f \in \Ltwo(G)$ we have \cite[Thm.~II.4.6]{engel2000analyticsemigroup}: 
\begin{equation}
e^{t \laplace_\G} f \in D(\laplace_\G^\infty) \subset C^\infty(G).
\end{equation}
The following result tells us that the data-driven Lie-Cartan Laplacians are also sectorial, so that the evolution they generate is indeed like a diffusion (well-posed and smoothing).
\begin{theorem}[Sectoriality of Data-Driven Lie-Cartan Laplacians]\label{thm:data_driven_lie_cartan_laplacian_sectorial}
Let $G$ be a Lie group, with gauge frame $\{\A_i^U\}_i$, let $\G$ be a data-driven invariant metric thereon, and let $\nu \in \R$. Assume that $(G, \G)$ is geodesically complete, and that the structure functions are bounded as follows: there is a $C > 0$ such that $g^{ij} d_{ki}^k(p) d_{lj}^l(p) \leq C$ for all $p \in G$.
Then the $\nu$ data-driven Lie-Cartan Laplacian $(\laplace_{\G, \nu}^U, H^2(G))$ is sectorial.
\end{theorem}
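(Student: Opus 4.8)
The plan is to write the $\nu$ data-driven Lie--Cartan Laplacian as a relatively bounded perturbation of the Laplace--Beltrami operator and then invoke the perturbation theory for generators of analytic semigroups. By the difference formula \eqref{eq:difference_data_driven_lie_cartan_laplacian_laplace_beltrami} we may split $\laplace_{\mathcal{G}, \nu}^U = \laplace_\mathcal{G} + B$, where $B \coloneqq -(1 - \nu)\, d_{ki}^k g^{ij} \A_j^U$ is a first-order differential operator whose coefficients are variable but, by the standing hypothesis, bounded. Since $(G, \mathcal{G})$ is geodesically complete, Lemma~\ref{lem:lbo_sectorial} already gives that $(\laplace_\mathcal{G}, H^2(G))$ is sectorial. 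Hence it suffices to show that $B$ is $\laplace_\mathcal{G}$-bounded with relative bound $0$ and to apply the analytic-perturbation theorem \cite[Thm.~III.2.10]{engel2000analyticsemigroup}, which then yields that $\laplace_\mathcal{G} + B$ generates an analytic semigroup on $D(\laplace_\mathcal{G} + B) = D(\laplace_\mathcal{G}) = H^2(G)$; the case $\nu = 1$ is trivial since then $B = 0$.

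The relative boundedness estimate I would carry out as follows. Setting $w \coloneqq d_{ki}^k g^{ij} \A_j^U \in \sections(TG)$, we have $B f = -(1-\nu)\, w f = -(1-\nu)\, \mathcal{G}(\gradient_\mathcal{G} f, w)$, so pointwise $\abs{B f} \le \abs{1-\nu}\, \norm{w}_\mathcal{G}\, \norm{\gradient_\mathcal{G} f}_\mathcal{G}$ by Cauchy--Schwarz. A short index computation, using that the metric components $g_{ij}$ and $g^{ij}$ are constant in the gauge frame, shows $\norm{w}_\mathcal{G}^2 = g^{ij} d_{ki}^k d_{lj}^l \le C$, which is exactly the hypothesis; therefore $\norm{B f}_{\Ltwo}^2 \le \abs{1-\nu}^2 C\, \norm{\gradient_\mathcal{G} f}_{\Ltwo}^2$. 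The Green identity from the proof of Lemma~\ref{lem:lbo_sectorial}, extended from $C_c^\infty(G)$ to $H^2(G)$ by density (geodesic completeness), gives $\norm{\gradient_\mathcal{G} f}_{\Ltwo}^2 = -(\laplace_\mathcal{G} f, f) \le \norm{\laplace_\mathcal{G} f}_{\Ltwo}\, \norm{f}_{\Ltwo}$, and Young's inequality $ab \le \varepsilon a^2 + \tfrac{1}{4\varepsilon} b^2$ upgrades this to $\norm{\gradient_\mathcal{G} f}_{\Ltwo}^2 \le \varepsilon \norm{\laplace_\mathcal{G} f}_{\Ltwo}^2 + \tfrac{1}{4\varepsilon} \norm{f}_{\Ltwo}^2$ for every $\varepsilon > 0$. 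Combining the two displays gives $\norm{B f}_{\Ltwo} \le \abs{1-\nu}\sqrt{C}\,\big(\sqrt{\varepsilon}\, \norm{\laplace_\mathcal{G} f}_{\Ltwo} + \tfrac{1}{2\sqrt{\varepsilon}}\, \norm{f}_{\Ltwo}\big)$, and letting $\varepsilon \downarrow 0$ shows the $\laplace_\mathcal{G}$-bound of $B$ is $0$; choosing $\varepsilon$ below the threshold of the perturbation theorem finishes the proof.

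I expect the only real difficulty to be \emph{technical bookkeeping} rather than a conceptual obstacle: one must check that $B$ is genuinely well defined as a map $H^2(G) \to \Ltwo(G)$ (so that the perturbation theorem applies on the nose), and that the Green identity and the density of $C_c^\infty(G)$ in $H^2(G)$ are valid at this level of generality. Here the boundedness hypothesis on the structure functions $d_{ij}^k$ is doing the essential work --- it ensures $\norm{w}_\mathcal{G}$ is bounded so that control of the first derivatives $\A_j^U f \in \Ltwo(G)$ (automatic for $f \in H^2(G)$) suffices --- and geodesic completeness supplies the density, essential self-adjointness, and Green identity already used in Lemma~\ref{lem:lbo_sectorial}. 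Finally, I would remark (as in the preceding remark of the excerpt) that all function spaces here are taken with respect to the Riemannian measure $\mu_\mathcal{G}$, which is not the Haar measure since the data-driven metric $\mathcal{G}$ need not be invariant; this is harmless and is the convention already fixed.
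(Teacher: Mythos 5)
Your proposal is correct and follows essentially the same route as the paper's proof: split off the first-order perturbation via the difference formula, bound its coefficient vector field by $\sqrt{C}$ using the hypothesis, control $\norm{\gradient_\mathcal{G} f}_{\Ltwo}$ by Green's identity, Cauchy--Schwarz, and Young's inequality to get relative bound $0$, and conclude with \cite[Thm.~III.2.10]{engel2000analyticsemigroup}. The only differences are cosmetic (where exactly Young's inequality is applied and the explicit choice of $\epsilon$ and $b$), and your closing remarks on well-definedness of $B$ on $H^2(G)$ and on the measure convention match the paper's own caveats.
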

\begin{proof}
Thm.~\ref{thm:data_driven_lie_cartan_laplacian} tells us that
\begin{equation*}
\laplace_{\G, \nu}^U = \laplace_\G + (\nu - 1) d_{ki}^k g^{ij} \A_j^U,
\end{equation*}
where $\laplace_\G$ is sectorial by Lem.~\ref{lem:lbo_sectorial}. Hence, $\laplace_{\G, \nu}^U$ is a perturbation of a sectorial operator. For $\nu = 1$, the perturbation is trivial, so consider $\nu \neq 1$. We will now show that the perturbation $(\nu - 1) d_{ki}^k g^{ij} \A_j^U$ has a $\laplace_\G$-bound of 0, so that we may conclude by \cite[Thm.~III.2.10]{engel2000analyticsemigroup} that $\laplace_{\G, \nu}^U$ is sectorial too. The $\laplace_\G$-bound of $(\nu - 1) d_{ki}^k g^{ij} \A_j^U$ is in our case given by \cite[Def.~III.2.1]{engel2000analyticsemigroup}:
\begin{multline*}
a_0 \coloneqq \inf_{a > 0} \{\exists b \geq 0 \mid \forall f \in H^2(G): \\ \norm{(\nu - 1) d_{ki}^k g^{ij} \A_j^U f}_{\Ltwo(G)} \\ \leq a \norm{\laplace_\G f}_{\Ltwo(G)} + b \norm{f}_{\Ltwo(G)}\}.
\end{multline*}
This bound is 0 (so that the generator is sectorial) if and only if for all $a > 0$ there exists a $b \geq 0$ such that
\begin{equation*}
\norm{d_{ki}^k g^{ij} \A_j^U f}_{\Ltwo(G)} \leq a \norm{\laplace_\G f}_{\Ltwo(G)} + b \norm{f}_{\Ltwo(G)}
\end{equation*}
for all $f \in H^2(G)$, as we can simply divide out the $(\nu - 1)$ on both sides. Hence, let $a > 0$ and $f \in H^2(G)$, and consider that 
\begin{equation*}
\begin{split}
& \norm{d_{ki}^k g^{ij} \A_j^U f}_{\Ltwo(G)}^2 = \int_G \abs{d_{ki}^k(p) g^{ij} \A_j^U|_p f}^2 \diff \mu_\G(p) \\
& = \int_G \abs{\G_p(\gradient_\G f(p), d_{ki}^k(p) g^{ij} \A_j^U|_p)}^2 \diff \mu_\G(p) \\
& \leq \int_G \norm{\gradient_\G f(p)}_\G^2 \norm{d_{ki}^k(p) g^{ij} \A_j^U|_p}_\G^2 \diff \mu_\G(p).
\end{split}
\end{equation*}
Using the bound
\begin{equation*}
\begin{split}
\norm{d_{ki}^k(p) g^{ij} \A_j^U|_p}_\G^2 & = g_{mn} d_{ki}^k(p) g^{im} d_{ln}^l(p) g^{jn} \\
& = g^{ij} d_{ki}^k(p) d_{lj}^l(p) \leq C,
\end{split}
\end{equation*}
we then see that
\begin{equation*}
\begin{split}
& \norm{d_{ki}^k g^{ij} \A_j^U f}_{\Ltwo(G)}^2 \\
& \leq \int_G \norm{\gradient_\G f(p)}_\G^2 \norm{d_{ki}^k(p) g^{ij} \A_j^U|_p}_\G^2 \diff \mu_\G(p) \\
& \leq C \int_G \norm{\gradient_\G f(p)}_\G^2 \diff \mu_\G(p) = C \norm{\gradient_\G f}_{\Ltwo(G)}^2.
\end{split}
\end{equation*}
Next, we use Green's identities and the Cauchy-Schwarz inequality to see that
\begin{equation*}
\begin{split}
\norm{\gradient_\G f}_{\Ltwo(G)} & = \sqrt{\int_G \G_p(\gradient_\G f(p), \gradient_\G f(p)) \diff \mu_\G(p)} \\
& = \sqrt{\int_G -\laplace_\G f(p) f(p) \diff \mu_\G(p)} \\
& \leq \sqrt{\norm{\laplace_\G f}_{\Ltwo(G)} \norm{f}_{\Ltwo(G)}}.
\end{split}
\end{equation*}
We can then apply Young's inequality to find that for all $\epsilon > 0$
\begin{multline*}
\sqrt{\norm{\laplace_\G f}_{\Ltwo(G)} \norm{f}_{\Ltwo(G)}} \\ \leq \epsilon \norm{\laplace_\G f}_{\Ltwo(G)} + \frac{1}{4 \epsilon} \norm{f}_{\Ltwo(G)}.
\end{multline*}
Hence, we have shown that for all $\epsilon > 0$ it holds that
\begin{equation*}
\begin{split}
\norm{d_{ki}^k \delta^{ij} \A_j^U f}_{\Ltwo(G)} & \leq \sqrt{C} \norm{\gradient_\G f}_{\Ltwo(G)} \\
& \leq \epsilon \sqrt{C} \norm{\laplace_\G f}_{\Ltwo(G)} \\
& + \frac{\sqrt{C}}{4 \epsilon} \norm{f}_{\Ltwo(G)}.
\end{split}
\end{equation*}
We can now set $\epsilon = a / \sqrt{C}$ and choose $b = C / 4a$ to find
\begin{equation*}
\norm{d_{ki}^k g^{ij} \A_j^U f}_{\Ltwo(G)} \leq a \norm{\laplace_\G f}_{\Ltwo(G)} + b \norm{f}_{\Ltwo(G)},
\end{equation*}
as required. 
\end{proof}
\begin{remark}
The condition $\norm{d_{ki}^k(p)}_{\mathbb{L}_\infty(G)} < \infty$ is sufficient to ensure that there is a $C > 0$ such that $g^{ij} d_{ki}^k(p) d_{lj}^l(p) \leq C$ for all $p \in G$, which is highly reasonable in practice. 
\end{remark}
Sectoriality of Lie-Cartan Laplacians, even on Lie groups that are not unimodular, follows as a special case of Thm.~\ref{thm:data_driven_lie_cartan_laplacian_sectorial}, using that Lie groups with invariant metrics are geodesically complete \cite{Milnor1976CurvaturesGroups}.
\begin{corollary}[Lie-Cartan Laplacian Sectoriality]\label{cor:lie_cartan_laplacian_sectorial}
Let $G$ be a connected Lie group, let $\G$ be an invariant metric thereon, and let $\nu \in \R$. Then the $\nu$ Lie-Cartan Laplacian $(\laplace_{\G, \nu}, H^2(G))$ is sectorial.
\end{corollary}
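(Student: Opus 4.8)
The plan is to observe that Thm.~\ref{thm:data_driven_lie_cartan_laplacian_sectorial} already contains this corollary as the special case in which the gauge frame is taken to be the invariant frame itself. Indeed, the invariant frame $\{\A_i\}_i$ is trivially a frame that is constant with respect to itself, so it qualifies as a (degenerate) gauge frame $\{\A_i^U\}_i$; for this choice the structure functions $d_{ij}^k$ reduce to the structure constants $c_{ij}^k$, and a metric $\mathcal{G}$ that is invariant (i.e. constant with respect to $\{\A_i\}_i$) is in particular data-driven invariant with respect to this gauge frame. Consequently $\nabla^{[\nu], U} = \nabla^{[\nu]}$ and $\laplace_{\mathcal{G}, \nu}^U = \laplace_{\mathcal{G}, \nu}$, so it suffices to check that the two hypotheses of Thm.~\ref{thm:data_driven_lie_cartan_laplacian_sectorial} are met.

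First I would verify geodesic completeness: since $\mathcal{G}$ is an invariant metric on the Lie group $G$, geodesic completeness of $(G, \mathcal{G})$ is classical \cite{Milnor1976CurvaturesGroups} (all maximal geodesics are reparametrised one-parameter subgroups translated around $G$, hence defined on all of $\R$). Next I would check the boundedness condition on the structure functions: here $d_{ki}^k = c_{ki}^k$ is a constant for each $i$, hence $g^{ij} d_{ki}^k d_{lj}^l = g^{ij} c_{ki}^k c_{lj}^l$ is a constant, and we may simply take $C \coloneqq g^{ij} c_{ki}^k c_{lj}^l \geq 0$ (non-negativity follows from positive definiteness of $(g^{ij})$). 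With both hypotheses satisfied, Thm.~\ref{thm:data_driven_lie_cartan_laplacian_sectorial} applies verbatim and yields that $(\laplace_{\mathcal{G}, \nu}, H^2(G))$ is sectorial.

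There is essentially no obstacle here; the only point deserving a moment's care is confirming that an invariant metric genuinely fits the ``data-driven invariant metric with respect to a gauge frame'' setting of Thm.~\ref{thm:data_driven_lie_cartan_laplacian_sectorial}, i.e. that nothing in Def.~\ref{def:first_gauge_vector} forbids the gauge frame from coinciding with the invariant frame — which it does not, since that construction is only used to produce a frame and the sectoriality argument never exploits features special to genuinely data-adapted frames. Alternatively, if one prefers not to invoke the gauge-frame formalism at all, one can simply rerun the proof of Thm.~\ref{thm:data_driven_lie_cartan_laplacian_sectorial} with $\A_i^U \to \A_i$ and $d_{ij}^k \to c_{ij}^k$: the perturbation $(\nu - 1) c_{ki}^k g^{ij} \A_j$ of the Laplace–Beltrami operator (which is sectorial by Lem.~\ref{lem:lbo_sectorial}) has $\laplace_\mathcal{G}$-bound $0$ by the same Green's-identity, Cauchy–Schwarz, and Young estimate, and \cite[Thm.~III.2.10]{engel2000analyticsemigroup} concludes the argument.
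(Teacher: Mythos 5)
Your proposal is correct and follows the same route as the paper: the paper derives the corollary as a special case of Thm.~\ref{thm:data_driven_lie_cartan_laplacian_sectorial} by taking the gauge frame to be the invariant frame, invoking geodesic completeness of Lie groups with invariant metrics via \cite{Milnor1976CurvaturesGroups}, with the boundedness hypothesis holding automatically because the structure constants are constants. Your write-up merely makes these verifications explicit, which the paper leaves implicit.
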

These theoretical results are corroborated by our practical experience: the evolutions generated by these (data-driven) Lie-Cartan Laplacians are indeed well-posed and smoothing as one would expect from a diffusion.

\subsection{PDE Scheme}\label{sec:pde_scheme}
We will now discuss how to extend RDS filtering to $\M$, using either the invariant frame or the gauge frame. For brevity, we only write out the scheme for the invariant case; the gauge frame case can be found by replacing the frame vector fields $\{\A_i\}_i$ with $\{\A_i^U\}_i$ and the invariant metrics with data-driven invariant metrics.

The diffusion is induced by a $0$ Lie-Cartan Laplacian with respect to a diagonal metric $\G$, namely
\begin{equation}\label{eq:diffusion_scheme_exact}
\laplace_{\G_D, 0} = g_D^{11} \A_1^2 + g_D^{22} \A_2^2 + g_D^{33} \A_3^2,
\end{equation}
where $g_D^{ij}$ is the $i,j$-th component of dual metric $\G_D^{-1}$, i.e. $\G_D$ is given by $\G_D(\A_i, \A_j)= g_{ii}^D \delta_{ij} = (g_D^{ii})^{-1}\delta_{ij}$. We discretise \eqref{eq:diffusion_scheme_exact}, on a grid with spatial step size $\Delta_{xy}$ and orientational step size $\Delta_\theta$, using second order central differences, with off-grid samples computed by trilinear interpolation.

As in $\Rtwo$, morphological dilations and erosions are generated by the norm of the gradient, 
\begin{multline}\label{eq:shock_scheme_exact}
\norm{\gradient_{\G_M} U}_{\G_M}^2 \\ = g_M^{11} \abs{\A_1 U}^2 + g_M^{22} \abs{\A_2 U}^2 + g_M^{33} \abs{\A_3 U}^2,
\end{multline}
where we have again chosen a diagonal metric $\G_M$, which however may be distinct from the metric $\G_D$ used for the diffusion.
We discretise \eqref{eq:shock_scheme_exact} using a Rouy-Tourin-style upwind scheme \cite{rouy1992viscosity,bekkers2015subriemanniangeodesics}, using trilinear interpolation to compute off-grid samples.

The strength of RDS filtering in $\Rtwo$ comes from the guidance terms which allow the data to instruct whether to locally perform diffusion or shock, and dilation or erosion. As in the $\Rtwo$ case, we use the edge-preserving weight function by Charbonnier et al. \cite{charbonnier1997switch} to switch between diffusion and shock, 
\begin{equation}\label{eq:ds_switch}
g(\norm{\gradient_{\G_g} U}_\G^2) \coloneqq \sqrt{1 + \norm{\gradient_{\G_g} U}_{\G_g}^2 / \lambda^2}^{\, -1}.
\end{equation}
For the coherence-enhancing shock filter, we note that the local convexity can be determined by computing the Laplacian perpendicular to the local orientation: 
\begin{equation}\label{eq:morph_switch}
S(\laplace^\perp_{{\G_S}} U) \coloneqq S(g_S^{22} \A_2^2 U + g_S^{33} \A_3^2 U),
\end{equation}
with $S$ a sigmoidal function as in Equation~\eqref{eq:shock_R2}. Altogether, the evolution PDE is
\begin{equation}\label{eq:ds_m2}
\boxed{
\begin{aligned}
\partial_t U & = g \left(\norm{\gradient_{\G_g} U_\nu}_{\G_g}^2\right) \laplace_{\G_D} U \\
& - \left(1 - g \left(\norm{\gradient_{\G_g} U_\nu}_{\G_g}^2\right)\right) \\
    & \cdot S_\rho \left(\laplace^\perp_{\G_S} U_\sigma \right)
    \norm{\gradient_{\G_M} U }_{\G_M},
\end{aligned}
}
\end{equation}
with initial condition $U(\cdot, 0) \coloneqq \W_\psi f$ for some image $f \in \Ltwo(\Rtwo)$. Here, we have regularised the guidance terms, namely:
\begin{align*}
U_\nu  &\coloneqq G_\nu *_{\SE(2)} U, \textrm{ and } \\
S_\rho(\laplace_{\G_S}^{\perp} U_\sigma) & \coloneqq G_\rho *_{\SE(2)} S(\laplace_{\G_S}^{\perp} G_\sigma *_{\SE(2)} U),
\end{align*}
with $G_\alpha$ a spatially isotropic Gaussian kernel with scale $\alpha > 0$ and $*_{\SE(2)}$ the group convolution on $\M$ (see \cite{franken2009cedos} for details on regularisation on $\SE(2) \cong \M$). The derivatives in the guidance terms are computed using central differences and linear interpolation. Finally, we apply reflective spatial boundary conditions. 
\begin{remark}
In principle, the four metrics $\G_D$, $\G_M$, $\G_g$, and $\G_S$ can be distinct. To limit the number of parameters that need to be tuned in practice, we make some restrictions. The components of all metrics are of the form $g_{11} = \xi^2$, $g_{22} = (\xi / \zeta)^2$, and $g_{33} = 1$, with $\xi = 0.1$ fixed (see Sec.~\ref{sec:gauge_frames}). 
We fix $\zeta = 1$ for the switches $\G_g$ and $\G_S$. Hence, we only tune the anisotropy parameters $\zeta$ of the diffusion and the shock.
The metric parameters can be understood intuitively by their relation to the Reeds-Shepp car model \cite{Reeds1990OptimalBackwards,Duits2018OptimalAnalysis}, as explained in Sec.~\ref{sec:m2}.
\end{remark}
The time is discretised using forward Euler, with the following stability criterion:

\begin{theorem}[Stability of RDS filtering on \texorpdfstring{$\M$}{position-orientation space}]\label{thm:stability}
Choose timestep $\tau \leq \min\{\tau_D, \tau_S\}$, where
\begin{equation}\label{eq:timesteps}
\begin{split}
\tau_D^{-1} & \coloneqq 2 \left(\frac{g_D^{11} + g_D^{22}}{\Delta_{xy}^2} + \frac{g_D^{33}}{\Delta_\theta^2}\right), \textrm{ and } \\
\tau_S^{-1} & \coloneqq \sqrt{\frac{g_M^{11} + g_M^{22}}{\Delta_{xy}^2} + \frac{g_S^{33}}{\Delta_\theta^2}}.
\end{split}
\end{equation}
Then, this scheme satisfies a maximum-minimum principle:
\begin{equation}\label{eq:stability}
\min_{w, h, o} U_{w, h, o}^0 \leq U_{i, j, k}^t \leq \max_{w, h, o} U_{w, h, o}^0 \textrm{ for all } i, j, k, t,
\end{equation}
with $U^t$ the solution at time step $t \in \N_{\geq 0}$.
\end{theorem}
\begin{proof}
The proof, which is analogous to that of the stability result for RDS filtering on $\Rtwo$ \cite[Thm.~1]{schaefer2024regularisedds}, can be found in App.~\ref{app:proof_stability}.
\end{proof}

\definecolor{oiBlue}{RGB}{0,114,178}
\definecolor{oiRed}{RGB}{213,94,0}
\definecolor{oiGreen}{RGB}{0,158,115}
\definecolor{oiYellow}{RGB}{240,228,66}
\definecolor{oiPurple}{RGB}{204,121,167}
\definecolor{oiBlack}{RGB}{0,0,0}
\definecolor{oiGrey}{RGB}{160,160,160}

\pgfplotscreateplotcyclelist{mycycle}{%
  {oiGreen,solid,mark=none},
  {oiGreen,dashed,mark=none},
  {oiBlue,solid,mark=none},
  {oiRed,solid,mark=none},
  {oiRed,dashed,mark=none},
  {oiYellow,solid,mark=none},
  {oiPurple,solid,mark=none},
}

\pgfplotsset{
  myaxis/.style={
    width=0.53\linewidth,
    height=7cm,
    xlabel={$t / t_\mathrm{opt}$},
    xmin=0, xmax=2,
    xtick={0,1,2},
    enlargelimits=false,
    no markers,
    line width=1.2pt,
  },
}

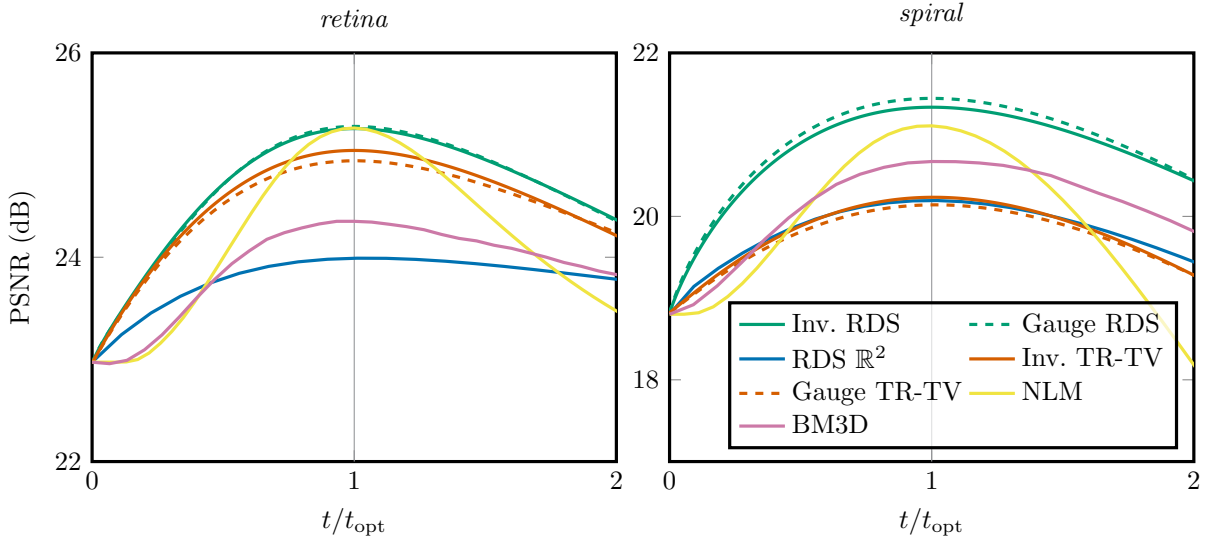
\begin{figure*}
\centering
\begin{tikzpicture}[baseline=(current bounding box.north west)]
\begin{axis}[
  name=retina,
  myaxis,
  ylabel={PSNR (dB)},
  title={\emph{retina}},
  ymin=22, ymax=26,
  ytick={22,24,26},
  cycle list name=mycycle,
]
\addplot[Gray,thin,forget plot] coordinates {(1,22) (1,26)};

\addplot table[x index=0, y index=1] {Figures/experiments/retina_DS_LI.dat};
\addplot table[x index=0, y index=1] {Figures/experiments/retina_DS_gauge.dat};
\addplot table[x index=0, y index=1] {Figures/experiments/retina_DS_R2.dat};
\addplot table[x index=0, y index=1] {Figures/experiments/retina_TV_LI.dat};
\addplot table[x index=0, y index=1] {Figures/experiments/retina_TV_gauge.dat};
\addplot table[x index=0, y index=1] {Figures/experiments/retina_nlm.dat};
\addplot table[x index=0, y index=1] {Figures/experiments/retina_bm3d.dat};
\end{axis}

\begin{axis}[
  name=spiral,
  myaxis,
  at={(retina.east)}, anchor=west, xshift=0.7cm,
  title={\emph{spiral}},
  ymin=17, ymax=22,
  ytick={18,20,22},
  legend pos=south east,
  legend style={
    legend columns=2,
    fill=white, fill opacity=0.7,
    draw opacity=1, text opacity=1,
    cells={anchor=west},
  },
  cycle list name=mycycle,
]
\addplot[Gray,thin,forget plot] coordinates {(1,17) (1,22)};

\addplot table[x index=0, y index=1] {Figures/experiments/spiral_DS_LI.dat};
\addplot table[x index=0, y index=1] {Figures/experiments/spiral_DS_gauge.dat};
\addplot table[x index=0, y index=1] {Figures/experiments/spiral_DS_R2.dat};
\addplot table[x index=0, y index=1] {Figures/experiments/spiral_TV_LI.dat};
\addplot table[x index=0, y index=1] {Figures/experiments/spiral_TV_gauge.dat};
\addplot table[x index=0, y index=1] {Figures/experiments/spiral_nlm.dat};
\addplot table[x index=0, y index=1] {Figures/experiments/spiral_bm3d.dat};

\legend{
  Inv.\ RDS,
  Gauge RDS,
  RDS $\mathbb{R}^2$,
  Inv.\ TR-TV,
  Gauge TR-TV,
  NLM,
  BM3D
}
\end{axis}
\end{tikzpicture}
\caption{PSNRs of denoising methods over time relative to optimal stopping time.}
\label{fig:plots}
\end{figure*}

\subsection{Experimental Results}\label{sec:experiments}
RDS filtering is a promising technique for inpainting, as already shown in \cite{SW23,schaefer2024regularisedds}, and for image enhancement, due to its combination of diffusion for denoising and shock for sharpening.
Here we show a selection of image enhancement and inpainting experiments performed using RDS filtering on $\M$. Our Python implementations, which use Taichi \cite{taichi} for GPU acceleration, and experiments can be found at \url{https://github.com/finnsherry/M2RDSFiltering}.
\subsubsection{Image Enhancement}\label{sec:denoising}
We compare the enhancement of degraded images using gauge and invariant RDS filtering on $\M$ to our own implementations of RDS filtering on $\Rtwo$ \cite{schaefer2024regularisedds} and gauge and invariant TR-TV \cite{smets2021tvflow}; the Python implementation of BM3D \cite{bm3dold} in the package \texttt{bm3d}; and the Python implementation of NLM \cite{nlm} in the package \texttt{scikit-image} \cite{scikit-image}. We consider a medical image \cite{Maastrichtstudy} -- a patch of the retina -- in Figure~\ref{fig:retina}, and a cartoon-like image of overlapping spirals in Figure~\ref{fig:spiral}.  
In both cases the images have been degraded with additive, correlated noise $K_\rho * n_\sigma$, with $K_\rho$ a Gaussian of standard deviation $\rho$ and $n_\sigma$ white noise with intensity $\sigma$; in Figure~\ref{fig:retina} we have $(\sigma, \rho) = (127.5, 2)$ and in Figure~\ref{fig:spiral} we have $(\sigma, \rho) = (255, 2)$. These experiments can be reproduced with the notebook \href{https://github.com/finnsherry/M2RDSFiltering/blob/main/experiments/image_enhancement.ipynb}{image\_enhancement.ipynb}.

\begin{figure*}
\centering
\begin{tabular}{ccc}
\includegraphics[width=0.3\linewidth,frame]{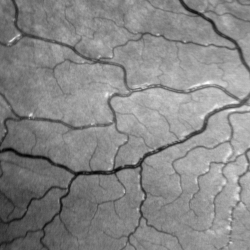} &
\includegraphics[width=0.3\linewidth,frame]{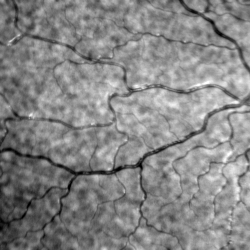}&
\includegraphics[width=0.3\linewidth,frame]{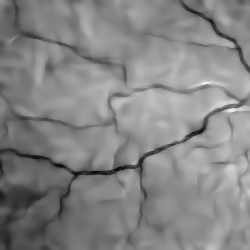} \\
(a) Clean image & (b) Degraded image & (c) Invariant $\M$ RDS \\
\includegraphics[width=0.3\linewidth,frame]{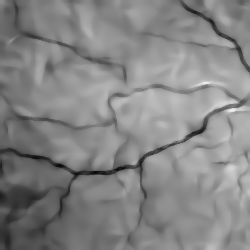} &
\includegraphics[width=0.3\linewidth,frame]{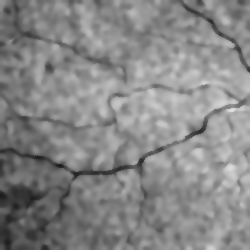} &
\includegraphics[width=0.3\linewidth,frame]{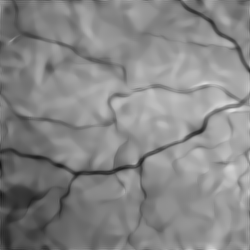} \\
(d) Gauge $\M$ RDS & (e) $\Rtwo$ RDS & (f) Invariant $\M$ TR-TV \\
\includegraphics[width=0.3\linewidth,frame]{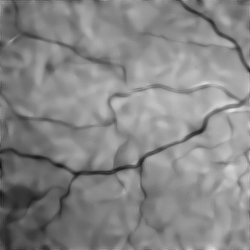} &
\includegraphics[width=0.3\linewidth,frame]{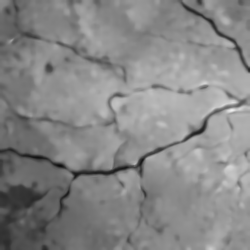} &
\includegraphics[width=0.3\linewidth,frame]{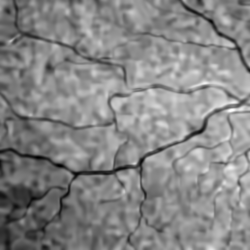} \\
(g) Gauge $\M$ TR-TV & (h) NLM & (i) BM3D
\end{tabular}
\caption{Denoising of image \emph{retina} degraded with correlated noise.}
\label{fig:retina}
\end{figure*}

We quantify the quality of the denoising using the Peak Signal-to-Noise Ratio (PSNR), given by
\begin{multline}\label{eq:psnr}
\PSNR(f, g) \coloneqq \\ 10 \cdot \log_{10}\left(\frac{255}{\int_{\Rtwo} \abs{f(\vec{x}) - g(\vec{x})}^2 \diff \vec{x}}\right) \mathrm{dB},
\end{multline}
with $f$ the denoised image and $g$ the ground truth.
Figure~\ref{fig:plots} shows the PSNR as a function of stopping time normalized by the optimal stopping time for the PDE-based methods (RDS, TR-TV), and as a function of noise power normalized to the optimal noise power for the other methods (BM3D, NLM).
RDS filtering on $\M$ outperforms the other methods w.r.t. maximal PSNR, and is less sensitive to the stopping time than BM3D and NLM. 
In Figure~\ref{fig:retina} and \ref{fig:spiral} we qualitatively compare the results of various methods at their highest PSNRs.
We see in Figure~\ref{fig:retina} that the PDE-based methods preserve the small vessels better than NLM.
In Figure~\ref{fig:spiral}, we see that NLM has enhanced spurious blobs on the background which have been removed by $\M$ RDS.
Gauge frame RDS performs better than invariant RDS, whereas for TR-TV the invariant version outperforms the gauge frame version, although in both cases the differences are slight.

\begin{figure*}
\centering
\begin{tabular}{ccc}
\includegraphics[width=0.3\linewidth,frame]{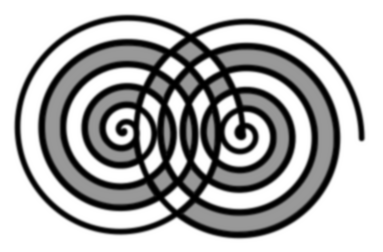} &
\includegraphics[width=0.3\linewidth,frame]{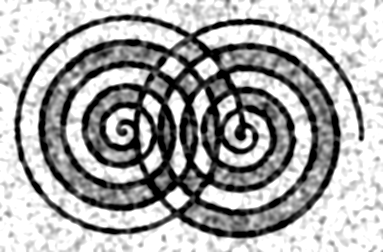}&
\includegraphics[width=0.3\linewidth,frame]{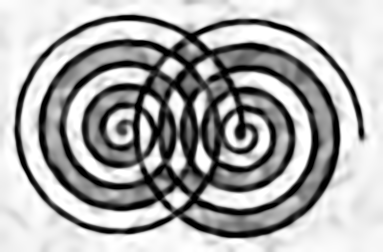} \\
(a) Clean image & (b) Degraded image & (c) LI $\M$ RDS \\
\includegraphics[width=0.3\linewidth,frame]{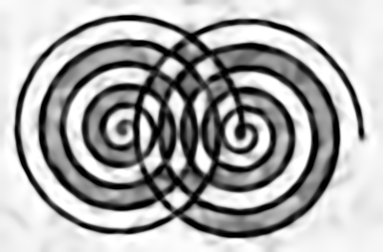} &
\includegraphics[width=0.3\linewidth,frame]{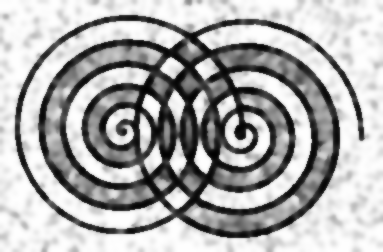} &
\includegraphics[width=0.3\linewidth,frame]{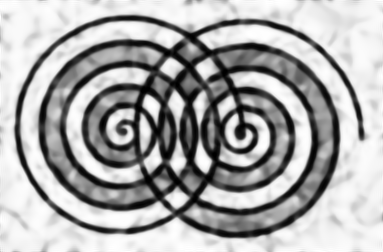} \\
(d) Gauge $\M$ RDS & (e) $\Rtwo$ RDS & (f) LI $\M$ TR-TV \\
\includegraphics[width=0.3\linewidth,frame]{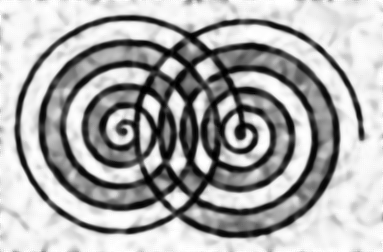} &
\includegraphics[width=0.3\linewidth,frame]{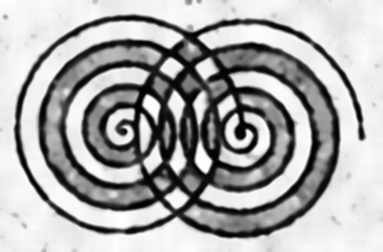} &
\includegraphics[width=0.3\linewidth,frame]{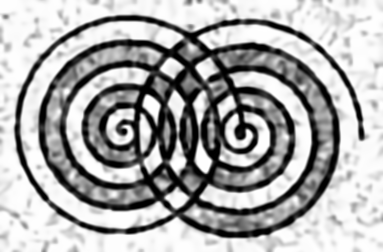} \\
(g) Gauge $\M$ TR-TV & (h) NLM & (i) BM3D
\end{tabular}
\caption{Denoising of image \emph{spiral} degraded with correlated noise.}
\label{fig:spiral}
\end{figure*}

\subsubsection{Inpainting}
RDS filtering on $\Rtwo$ was originally developed with the goal of inpainting, the task of filling in missing gaps in an image \cite{SW23,schaefer2024regularisedds}. 
While it can create good inpainting results, RDS inpainting on $\Rtwo$ -- like many other methods on $\Rtwo$ -- cannot reconstruct crossings. 
However, by lifting the image to $\M$ with the orientation score transform \eqref{eq:ost}, crossing structures are disentangled (recall Fig.~\ref{fig:multiorientation_processing}), allowing RDS inpainting on $\M$ to reconstruct crossings. 
Figure~\ref{fig:inpainting} demonstrates this: (invariant) RDS inpainting Figure~\ref{fig:inpainting}c on $\M$ creates crossing lines, while on $\Rtwo$ it connects different lines without crossings Figure~\ref{fig:inpainting}b. These experiments can be reproduced with the notebook \href{https://github.com/finnsherry/M2RDSFiltering/blob/main/experiments/inpainting.ipynb}{inpainting.ipynb}.

\begin{figure*}[tb]
\centering
\begin{tabular}{ccc}
\includegraphics[width=0.3\linewidth,frame]{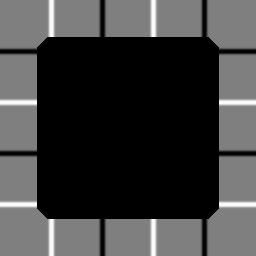} &
\includegraphics[width=0.3\linewidth,frame]{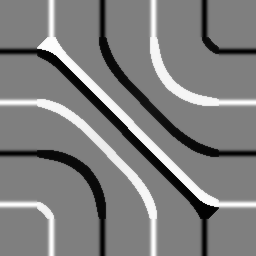}&
\includegraphics[width=0.3\linewidth,frame]{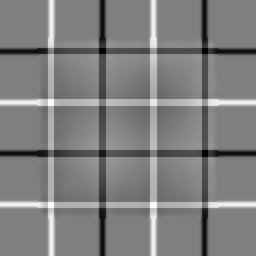}  \\
(a) Masked input & (b) $\Rtwo$ RDS inpainting &  (c) LI $\M$ RDS inpainting \\
\end{tabular}
\caption{RDS inpainting of crossings on $\Rtwo$ and $\M$. The inpainting region is marked by a black square in (a).}
\label{fig:inpainting}
\end{figure*}

\section{Training Diffusion-Shock Filtering in PDE-Based Networks}\label{sec:trained_diffusion_shock_filtering}
In Sec.~\ref{sec:experiments}, RDS filtering showed promising results as an image processing technique. One limitation of RDS filtering, which it shares with other PDE-based methods, is that it requires manual parameter tuning. To overcome this, we propose to integrate RDS into PDE-based networks, such that the parameters can be learned from data. 

\subsection{Scheme}
As described in Sec.~\ref{sec:pde_based_networks}, the primary PDEs currently used in PDE-based networks generate generalised scale spaces, and hence can be solved efficiently using generalised convolutions. Unfortunately, the RDS PDE does not generate such a scale space. We therefore developed a computationally tractable scheme based on operator splitting and gating, which we explain here for a single layer and channel. 
Gating, seen e.g. in the gated recurrent unit used in recurrent neural networks \cite{Cho2014PropertiesApproaches}, essentially means that the processing that is performed depends on the input. In this sense, we can actually interpret the terms $g$ and $S$ in the RDS PDE as location dependent gating mechanisms. The $g$ term ensures diffusion occurs when there is no structure present, while shock occurs when there is structure. Similarly, $S$ will cause light areas to be dilated while dark areas get eroded. We therefore compute the diffusions, dilations, and erosions -- which can be done using existing modules in the Python package LieTorch \cite{Smets2022PDEGCNNs} -- and combine them using these new guidance terms.

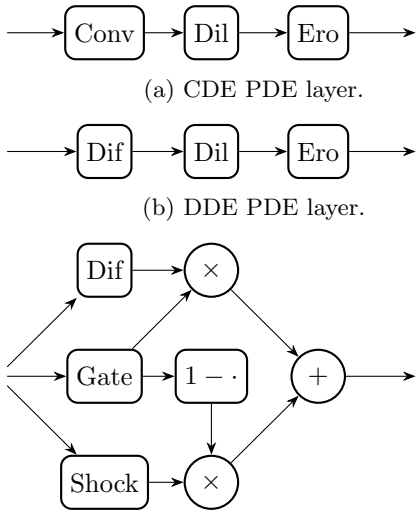
\begin{figure}
\centering
\begin{subfigure}[t]{0.9\linewidth}
\begin{tikzpicture}[
    node distance=4em,
    circ/.style={circle,draw,thick,minimum size=2em,align=center},
    sqr/.style={rectangle,draw,thick,rounded corners,minimum size=2em,align=center},
    plain/.style={draw=none,align=center},
    ->, >=Stealth
]

\node[plain] (U) {};
\node[sqr, right of=U] (C) {Conv};
\node[sqr, right of=C] (D) {Dil};
\node[sqr, right of=D] (E) {Ero};
\node[plain, right of=E] (out) {};

\draw (U) -- (C);
\draw (C) -- (D);
\draw (D) -- (E);
\draw (E) -- (out);
\end{tikzpicture}
\caption{CDE PDE layer.}\label{fig:cde_pde_layer}
\end{subfigure}
\begin{subfigure}[t]{0.9\linewidth}
\begin{tikzpicture}[
    node distance=4em,
    circ/.style={circle,draw,thick,minimum size=2em,align=center},
    sqr/.style={rectangle,draw,thick,rounded corners,minimum size=2em,align=center},
    plain/.style={draw=none,align=center},
    ->, >=Stealth
]

\node[plain] (U) {};
\node[sqr, right of=U] (Dif) {Dif};
\node[sqr, right of=Dif] (D) {Dil};
\node[sqr, right of=D] (E) {Ero};
\node[plain, right of=E] (out) {};

\draw (U) -- (Dif);
\draw (Dif) -- (D);
\draw (D) -- (E);
\draw (E) -- (out);
\end{tikzpicture}
\caption{DDE PDE layer.}\label{fig:dde_pde_layer}
\end{subfigure}
\begin{subfigure}[t]{0.9\linewidth}
\begin{tikzpicture}[
    node distance=4em,
    circ/.style={circle,draw,thick,minimum size=2em,align=center},
    sqr/.style={rectangle,draw,thick,rounded corners,minimum size=2em,align=center},
    plain/.style={draw=none,align=center},
    ->, >=Stealth
]

\node[plain] (U) {};
\node[sqr, right of=U] (g) {Gate};
\node[sqr, below of=g] (M) {Shock};
\node[sqr, above of=g] (D) {Dif};

\node[circ, right of=D] (tD) {$\times$};
\node[sqr, right of=g] (flip) {$1 - \cdot$};
\node[circ, right of=M] (tM) {$\times$};

\node[circ, right of=flip] (pD) {$+$};

\node[plain, right of=pD] (out) {};

\draw (U) -- (D);
\draw (U) -- (M);
\draw (U) -- (g);

\draw (D) -- (tD);
\draw (g) -- (tD);

\draw (M) -- (tM);
\draw (g) -- (flip);
\draw (flip) -- (tM);

\draw (tD) -- (pD);
\draw (tM) -- (pD);
\draw (pD) -- (out);
\end{tikzpicture}
\caption{RDS PDE layer, implementing Eq.~\eqref{eq:ds_trained} to approximately solve Eq.~\eqref{eq:ds_m2}.}\label{fig:rds_pde_layer}
\end{subfigure}
\caption{Comparison of PDE-(G-)CNN layers. The labeled nodes perform the following operations: Conv $\leftrightarrow$ convection; Dil $\leftrightarrow$ dilation; Ero $\leftrightarrow$ erosion; Dif $\leftrightarrow$ diffusion; Gate $\leftrightarrow$ gating mechanism $g$.}
\label{fig:pde_layer_comparison}
\end{figure}

The overall structure of the RDS PDE layer can be seen in Fig.~\ref{fig:rds_pde_layer}; here we describe the layer mathematically. Note that in PDE-G-CNNs, feature maps are independently evolved by PDEs before being affinely mixed (recall Fig.~\ref{fig:pde_g_cnn}); as such we only consider the action of PDE layers on individual feature maps.

We first introduce some notation for the solution operators. $\Phi_\cdot^\mathrm{dif}(U_0)$ denotes the solution of the diffusion equation $\partial_t U = \laplace_{\G_\cdot} U$,
while $\Phi_\cdot^\mathrm{dil}(U_0)$ is the solution of the dilation equation $\partial_t U = \norm{\gradient_{\G_\cdot} U}_{\G_\cdot}^{2 \alpha}$,\footnote{$\alpha \in (1/2, 1]$ controls the smoothness of the kernel; see \cite[Rem.~14]{Smets2022PDEGCNNs} for details. The kernels must be smooth to enable network parameter optimisation.}
with initial condition $U_0$ at some fixed time $T > 0$. Note that then $-\Phi_\cdot^\mathrm{dil}(-U_0)$ is the solution of the erosion equation $\partial_t U = -\norm{\gradient_{\G_\cdot} U}_{\G_\cdot}^{2 \alpha}$, with initial condition $U_0$ at time $T$ \cite{Smets2022PDEGCNNs}. Next, we define the guidance terms:
\begin{equation*}
\begin{split}
\Phi_\lambda^g(U) & \coloneqq g(\Phi_g^\mathrm{dif}(\norm{\gradient_{\G_g} U}_{\G_g}); \lambda), \textrm{ and } \\
\Phi_\epsilon^S(U) & \coloneqq S(\laplace_{\G_S} \Phi_S^\mathrm{dif}(U); \epsilon).
\end{split}
\end{equation*}
Note that we regularise by diffusing, instead of by convolving with a spatially isotropic Gaussian kernel as in Sec.~\ref{sec:m2_ds}, allowing for spatially anisotropic regularisation in $\M$. We then combine these components as follows:
\begin{equation}\label{eq:ds_trained}
\begin{split}
& \Phi(U; \G_D, \G_M, \G_g, \lambda, \G_S, \epsilon) \coloneqq \Phi_\lambda^g(U) \cdot \Phi_D^\mathrm{dif}(U) \\
& \quad + (1 - \Phi_\lambda^g(U)) \cdot \abs{\Phi_\epsilon^S(U)} \\
& \qquad \cdot \big(\Phi_M^\mathrm{dil}(U) \cdot \indicator\{\Phi_\epsilon^S(U) < 0\} \\
& \qquad - \Phi_M^\mathrm{dil}(-U) \cdot \indicator\{\Phi_\epsilon^S(U) > 0\} \big) \\
& \quad + (1 - \Phi_\lambda^g(U)) (1 - \abs{\Phi_\epsilon^S(U)}) \cdot U.
\end{split}
\end{equation}
Here, $U$ is the input feature map and $\G_D$, $\G_M$, $\G_g$, $\lambda$, $\G_S$, and $\epsilon$ are the trainable parameters of the RDS PDE layer. 
We can reason intuitively about this definition in some limiting cases:
\begin{itemize}
\item In regions where $\Phi_\lambda^g(U) \approx 1$, only diffusion occurs;
\item In regions where $\Phi_\lambda^g(U) \approx 0$, only shock occurs: dilation where $\Phi_\epsilon^S(U) < 0$ and erosion where $\Phi_\epsilon^S(U) > 0$;
\item In regions where $\Phi_\lambda^g(U) \approx 0$ and $\Phi_\epsilon^S(U) \approx 0$, neither diffusion nor shock occurs, so $U$ should remain unchanged. 
\end{itemize}
While the approximations of the diffusion, dilation, and erosion have already been studied in \cite{Smets2022PDEGCNNs}, we have not quantified the accuracy of our scheme: it would be interesting to investigate how the approximation error depends on the time step $T$.

In addition to the metric parameters, for which we use the same initialisation as for previous PDE-(G-)CNNs \cite{Smets2022PDEGCNNs,Bellaard2025PDECNNs}, the guidance terms give us two new trainable parameters: the contrast parameter $\lambda$ and the regularisation parameter $\epsilon$. For the PDE-CNN version, we initialise $\lambda \sim \Uniform[-1, 1]$ and $\epsilon \sim \Uniform[-1, 1]$; for PDE-G-CNNs, we initialise $\lambda \sim \Uniform[-10, 10]$ and $\epsilon \sim \Uniform[-10, 10]$.

\subsection{Experiments}\label{sec:experiments_trained}
\begin{figure*}
\centering
\begin{tabular}{cccc}
\includegraphics[width=0.22\linewidth]{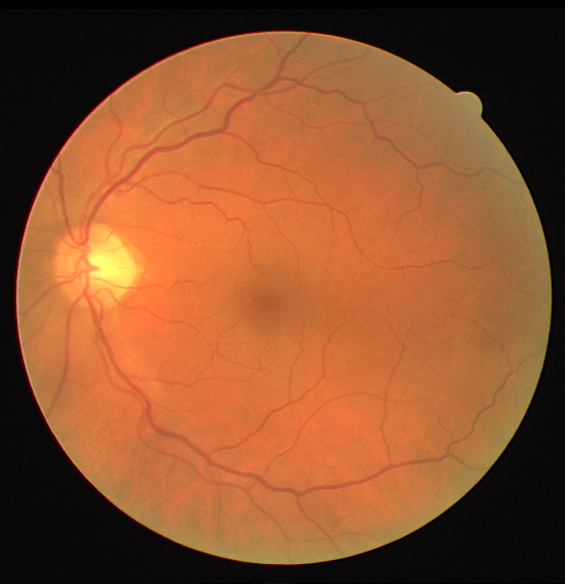} &
\includegraphics[width=0.22\linewidth]{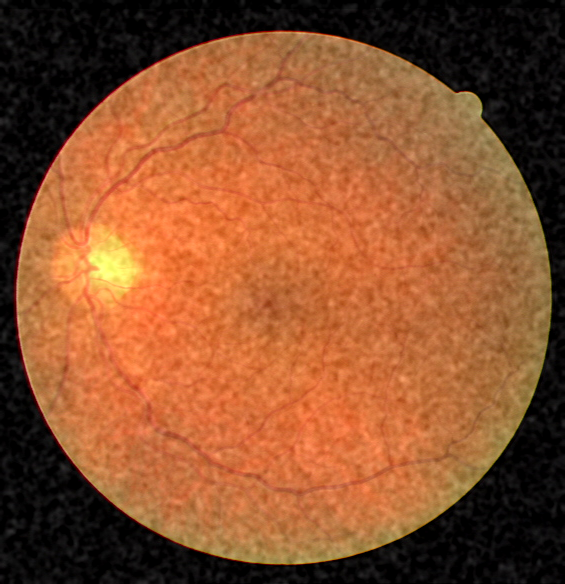} &
\includegraphics[width=0.22\linewidth]{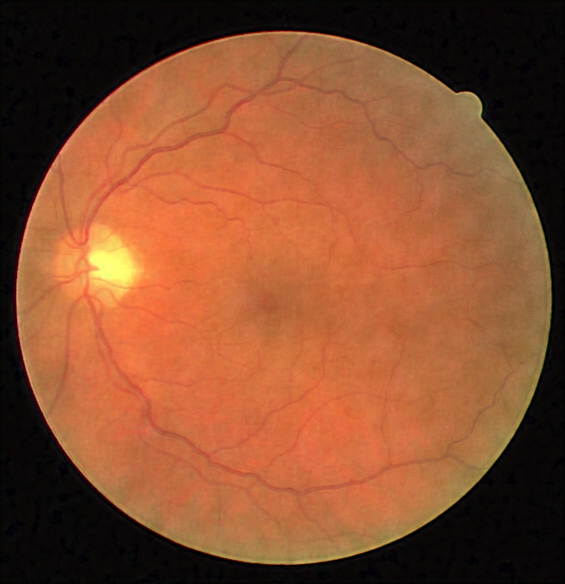} &
\includegraphics[width=0.22\linewidth]{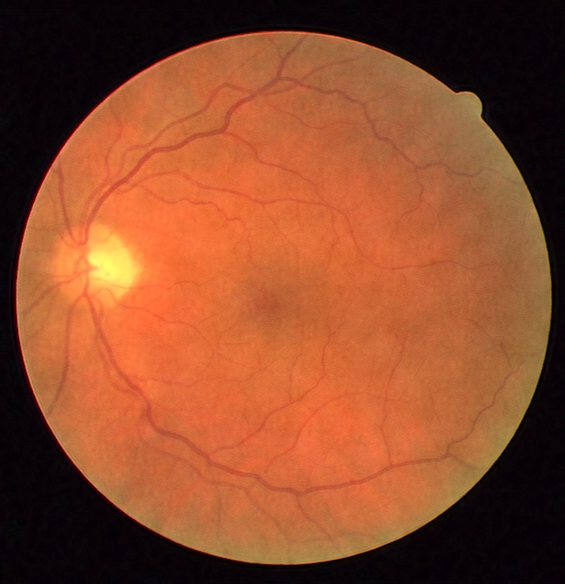} \\
(a) Clean image & (b) Degraded image & (c) RDS $\M$ & (d) RDS $\Rtwo$ \\
\includegraphics[width=0.22\linewidth]{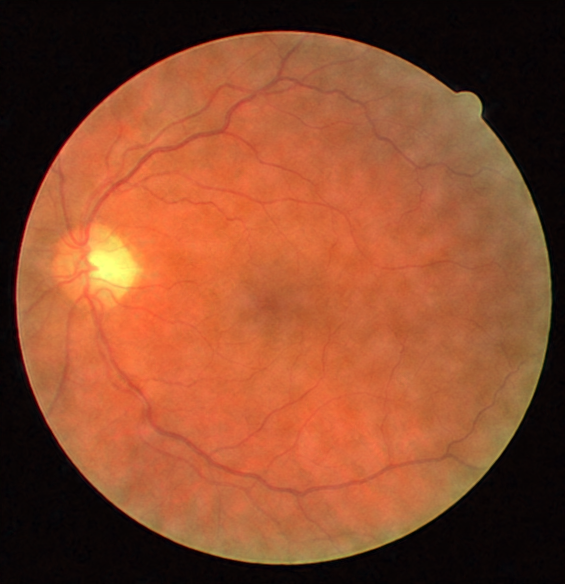} &
\includegraphics[width=0.22\linewidth]{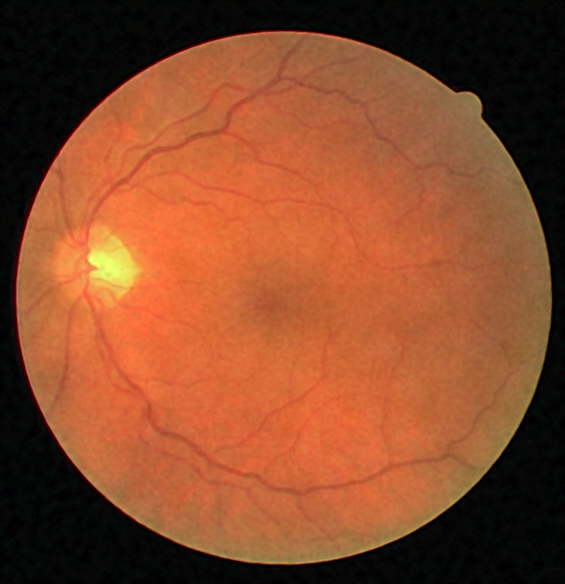} &
\includegraphics[width=0.22\linewidth]{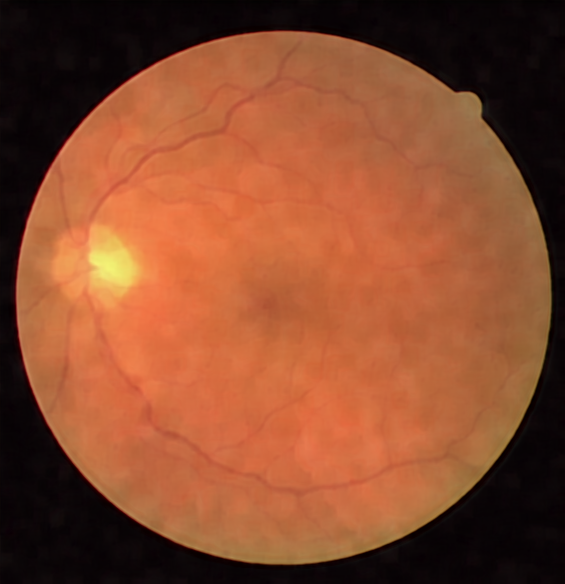} &
\includegraphics[width=0.22\linewidth]{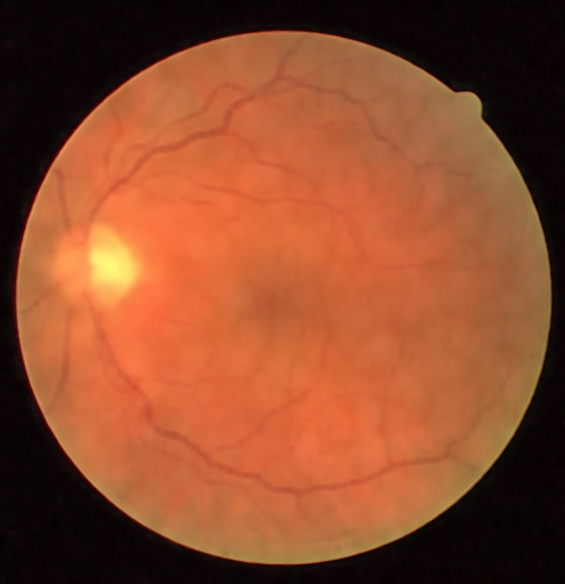} \\
(e) CDE $\M$ & (f) DDE $\M$ & (g) CDE $\Rtwo$ & (h) DDE $\Rtwo$ \\
\end{tabular}
\caption{Sample from the DRIVE dataset \cite{Staal2004RidgeRetina}. (a) Clean image; (b) input image to be denoised; (c-h) output from trained PDE-(G-)CNNs.}
\label{fig:drive}
\end{figure*}
\begin{figure*}
\centering
\begin{tikzpicture}[baseline=(current bounding box.west)]
\begin{axis}[
    name=psnr,
    boxplot/draw direction=x,
    height=5cm,
    width=0.4\linewidth,
    title={PSNR (dB)},
    xmin=15, xmax=30,
    xtick={15, 20, 25, 30},
    ytick={1, 2, 3, 4, 5, 6},
    yticklabels={DDE $\Rtwo$, CDE $\Rtwo$, RDS $\Rtwo$, DDE $\M$, CDE $\M$, RDS $\M$}
]

\addplot+[
    boxplot prepared={
        lower whisker=18.82238197,
        lower quartile=20.44015645980835,
        median=22.52833843231201,
        upper quartile=22.938190937042236,
        upper whisker=23.91912841796875
    },
    black,
    solid,
    thick
] coordinates {};

\addplot+[
    boxplot prepared={
        lower whisker=18.807395935058594,
        lower quartile=19.710847854614258,
        median=21.568166732788086,
        upper quartile=22.70248794555664,
        upper whisker=23.34989356994629
    },
    black,
    solid,
    thick
] coordinates {};

\addplot+[
    boxplot prepared={
        lower whisker=18.037418365478516,
        lower quartile=19.46623468399048,
        median=21.373231887817383,
        upper quartile=24.03748083114624,
        upper whisker=24.704116821289062
    },
    OliveGreen,
    solid,
    thick
] coordinates {};

\addplot+[
    boxplot prepared={
        lower whisker=23.98473549,
        lower quartile=26.02223014831543,
        median=27.022764205932617,
        upper quartile=27.19014072418213,
        upper whisker=27.557680130004883
    },
    black,
    solid,
    thick
] coordinates {};

\addplot+[
    boxplot prepared={
        lower whisker=24.041229248046875,
        lower quartile=26.248781204223633,
        median=27.124736785888672,
        upper quartile=27.45819616317749,
        upper whisker=27.987688064575195
    },
    black,
    solid,
    thick
] coordinates {};

\addplot+[
    boxplot prepared={
        lower whisker=25.83173370361328,
        lower quartile=26.60090208053589,
        median=27.429889678955078,
        upper quartile=29.314905643463135,
        upper whisker=29.84362793
    },
    OliveGreen,
    solid,
    thick
] coordinates {};

\end{axis}

\begin{axis}[
    name=ssim,
    at={(psnr.east)},
    anchor=west,
    xshift=1cm,
    clip=false,
    boxplot/draw direction=x,
    height=5cm,
    width=0.4\linewidth,
    title={SSIM},
    xmin=0.65, xmax=0.95,
    xtick={0.65, 0.75, 0.85, 0.95},
    ytick={1, 2, 3, 4, 5, 6},
    yticklabels={}
]

\addplot+[
    boxplot prepared={
        lower whisker=0.6860532164573669,
        lower quartile=0.7392765134572983,
        median=0.7661594152450562,
        upper quartile=0.7912729680538177,
        upper whisker=0.8480874300003052
    },
    black,
    solid,
    thick
] coordinates {};

\addplot+[
    boxplot prepared={
        lower whisker=0.6854907274246216,
        lower quartile=0.7703386098146439,
        median=0.7843812108039856,
        upper quartile=0.8180289715528488,
        upper whisker=0.8561238050460815
    },
    black,
    solid,
    thick
] coordinates {};

\addplot+[
    boxplot prepared={
        lower whisker=0.7038760185241699,
        lower quartile=0.755884439,
        median=0.7744369804859161,
        upper quartile=0.8123606443405151,
        upper whisker=0.8605347871780396
    },
    OliveGreen,
    solid,
    thick
] coordinates {};

\addplot+[
    boxplot prepared={
        lower whisker=0.738758922,
        lower quartile=0.8339939564466476,
        median=0.8454535007476807,
        upper quartile=0.8513462841510773,
        upper whisker=0.8679925799369812
    },
    black,
    solid,
    thick
] coordinates {};

\addplot+[
    boxplot prepared={
        lower whisker=0.7915983200073242,
        lower quartile=0.8411989361047745,
        median=0.8599987030029297,
        upper quartile=0.870142475,
        upper whisker=0.8927407264709473
    },
    black,
    solid,
    thick
] coordinates {};

\addplot+[
    boxplot prepared={
        lower whisker=0.8161311149597168,
        lower quartile=0.8430549502372742,
        median=0.8835571706295013,
        upper quartile=0.8959725350141525,
        upper whisker=0.9029864072799683
    },
    OliveGreen,
    solid,
    thick
] coordinates {};

\node[black] at (axis cs:1.04,7.91){Parameters};
\node[OliveGreen] at (axis cs:1.04,6){4160};
\node[black] at (axis cs:1.04,5){4160};
\node[black] at (axis cs:1.04,4){4160};
\node[OliveGreen] at (axis cs:1.04,3){4262};
\node[black] at (axis cs:1.04,2){4126};
\node[black] at (axis cs:1.04,1){4058};

\end{axis}

\end{tikzpicture}
\caption{Comparison of PDE-(G-)CNNs tested on DRIVE. The new architectures based on RDS filtering are highlighted in \textcolor{OliveGreen}{green}.}
\label{fig:trained_denoising}
\end{figure*}
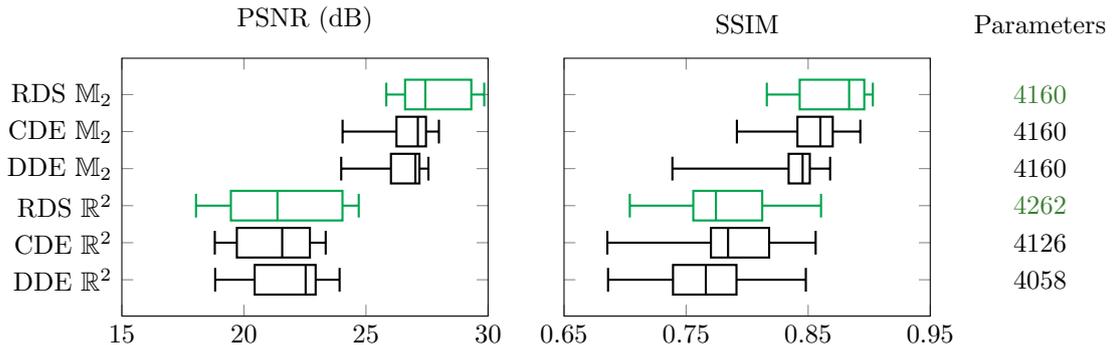
Our implementations of RDS PDE-(G-)CNNs have been added to the open source Python package LieTorch \cite{Smets2022PDEGCNNs}, available at \url{https://gitlab.com/bsmetsjr/lietorch}. Here, we again test the efficacy of the methods on denoising and inpainting tasks. Previous work \cite{Smets2022PDEGCNNs,Bellaard2023Analysis,Bellaard2023GeometricPDEGCNNs,Pai2023FunctionalNetworks,Bellaard2025PDECNNs} has shown that PDE-(G-)CNNs can compete with (G-)CNNs in terms of performance, while significantly reducing the network complexity and improving the data efficiency. Hence, we focus on comparing to existing PDE-(G-)CNNs at fixed network complexity. We use two PDEs as a baseline: 
\begin{enumerate}
\item Convection-Dilation-Erosion (CDE): This PDE (see Fig.~\ref{fig:cde_pde_layer}) was frequently used in the past.
\item Diffusion-Dilation-Erosion (DDE): This PDE (see Fig.~\ref{fig:dde_pde_layer}) is most similar to RDS, and hence allows us to test the gating mechanism.
\end{enumerate}
For the PDE-CNNs, we constrain the metrics to be isotropic, such that the corresponding PDE layers are approximately equivariant to roto-translations. Notably, as only the trivial convection on $\Rtwo$ is equivariant, the CDE PDE-CNN therefore still won't be equivariant.

In all experiments we use the AdamW optimiser \cite{Loshchilov2017DecoupledRegularization} with an exponential learning rate decay schedule. We set the initial learning rate to 0.01, the decay rate to 0.95, and the weight decay to 0.005; for the rest of the parameters we use the PyTorch default values. In all PDE-G-CNNs we lift to 8 orientations. The parametrised kernels have shape $7 \times 7$ in the PDE-CNNs and $7 \times 7 \times 7$ in the PDE-G-CNNs.
Every model architecture is trained and tested 10 times with a different seed.

\begin{figure*}
\centering
\begin{tabular}{cccc}
\includegraphics[width=0.22\linewidth]{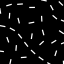} &
\includegraphics[width=0.22\linewidth]{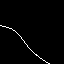} &
\includegraphics[width=0.22\linewidth]{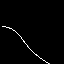} &
\includegraphics[width=0.22\linewidth]{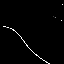} \\
(a) Input & (b) Ground truth & (c) RDS $\M$ & (d) RDS $\Rtwo$ \\
\includegraphics[width=0.22\linewidth]{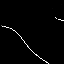} &
\includegraphics[width=0.22\linewidth]{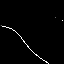} &
\includegraphics[width=0.22\linewidth]{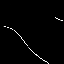} &
\includegraphics[width=0.22\linewidth]{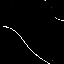}  \\
(e) CDE $\M$ & (f) DDE $\M$ & (g) CDE $\Rtwo$ & (h) DDE $\Rtwo$ \\
\end{tabular}
\caption{Sample from the Lines dataset \cite{Bellaard2023Analysis}. (a) Input image in which the line must be completed; (b) ground truth line; (c-h) output from trained PDE-(G-)CNNs.}
\label{fig:lines}
\end{figure*}
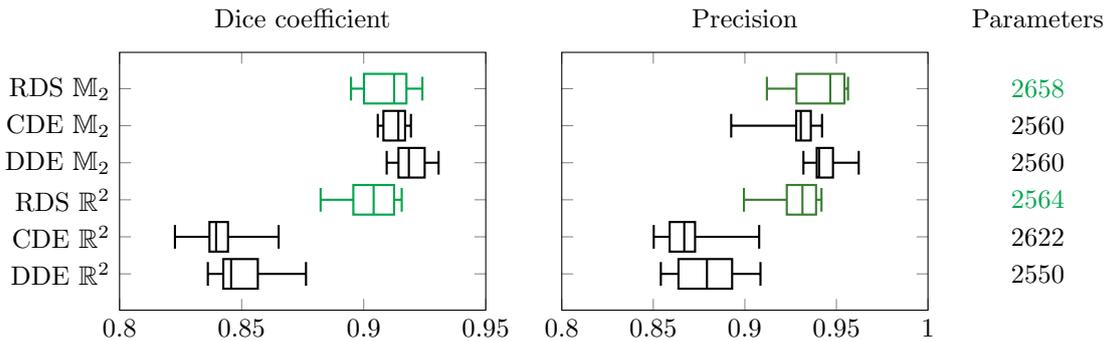
\begin{figure*}
\centering
\begin{tikzpicture}[baseline=(current bounding box.west)]
\begin{axis}[
    name=dice,
    boxplot/draw direction=x,
    height=5cm,
    width=0.4\linewidth,
    title={Dice coefficient},
    xmin=0.8, xmax=0.95,
    xtick={0.8, 0.85, 0.9, 0.95},
    ytick={1, 2, 3, 4, 5, 6},
    yticklabels={DDE $\Rtwo$, CDE $\Rtwo$, RDS $\Rtwo$, DDE $\M$, CDE $\M$, RDS $\M$}
]

\addplot+[
    boxplot prepared={
        lower whisker=0.836161196,
        lower quartile=0.8425013422966003,
        median=0.8456999957561493,
        upper quartile=0.8566375970840454,
        upper whisker=0.8763683438301086
    },
    black,
    solid,
    thick
] coordinates {};

\addplot+[
    boxplot prepared={
        lower whisker=0.8226860761642456,
        lower quartile=0.8368242233991623,
        median=0.8395455479621887,
        upper quartile=0.8444167077541351,
        upper whisker=0.8651645183563232
    },
    black,
    solid,
    thick
] coordinates {};

\addplot+[
    boxplot prepared={
        lower whisker=0.8823494911193848,
        lower quartile=0.8957525193691254,
        median=0.9040854275226593,
        upper quartile=0.9124542027711868,
        upper whisker=0.9155813455581665
    },
    OliveGreen,
    solid,
    thick
] coordinates {};

\addplot+[
    boxplot prepared={
        lower whisker=0.9094266891479492,
        lower quartile=0.9142415076494217,
        median=0.9184966385364532,
        upper quartile=0.9249799996614456,
        upper whisker=0.930678666
    },
    black,
    solid,
    thick
] coordinates {};

\addplot+[
    boxplot prepared={
        lower whisker=0.9057742357254028,
        lower quartile=0.9080718606710434,
        median=0.9141895771026611,
        upper quartile=0.9169504940509796,
        upper whisker=0.9193519949913025
    },
    black,
    solid,
    thick
] coordinates {};

\addplot+[
    boxplot prepared={
        lower whisker=0.8947964906692505,
        lower quartile=0.9001179784536362,
        median=0.9124675989151001,
        upper quartile=0.9175031781196594,
        upper whisker=0.9240387678146362
    },
    OliveGreen,
    solid,
    thick
] coordinates {};

\end{axis}

\begin{axis}[
    name=precision,
    at={(dice.east)},
    anchor=west,
    xshift=1cm,
    clip=false,
    boxplot/draw direction=x,
    height=5cm,
    width=0.4\linewidth,
    title={Precision},
    xmin=0.8, xmax=1.0,
    xtick={0.8, 0.85, 0.9, 0.95, 1.0},
    ytick={1, 2, 3, 4, 5, 6},
    yticklabels={}
]

\addplot+[
    boxplot prepared={
        lower whisker=0.8541176319122314,
        lower quartile=0.8637364208698273,
        median=0.8793109357357025,
        upper quartile=0.8930473029613495,
        upper whisker=0.9085304737091064
    },
    black,
    solid,
    thick
] coordinates {};

\addplot+[
    boxplot prepared={
        lower whisker=0.8502352833747864,
        lower quartile=0.8589302897453308,
        median=0.8669086396694183,
        upper quartile=0.8728009909391403,
        upper whisker=0.9078103303909302
    },
    black,
    solid,
    thick
] coordinates {};

\addplot+[
    boxplot prepared={
        lower whisker=0.8995004892349243,
        lower quartile=0.9229026436805725,
        median=0.9314559102058411,
        upper quartile=0.9388822764158249,
        upper whisker=0.9418632984161377
    },
    OliveGreen,
    solid,
    thick
] coordinates {};

\addplot+[
    boxplot prepared={
        lower whisker=0.9319875240325928,
        lower quartile=0.9392609596252441,
        median=0.9405905902385712,
        upper quartile=0.9481289982795715,
        upper whisker=0.9621942043304443
    },
    black,
    solid,
    thick
] coordinates {};

\addplot+[
    boxplot prepared={
        lower whisker=0.892531693,
        lower quartile=0.92802833,
        median=0.9305585026741028,
        upper quartile=0.9360301047563553,
        upper whisker=0.9421557784080505
    },
    black,
    solid,
    thick
] coordinates {};

\addplot+[
    boxplot prepared={
        lower whisker=0.9120359420776367,
        lower quartile=0.9281624853610992,
        median=0.9466430246829987,
        upper quartile=0.9543877393007278,
        upper whisker=0.9563491940498352
    },
    OliveGreen,
    solid,
    thick
] coordinates {};

\node[black] at (axis cs:1.06,7.91){Parameters};
\node[OliveGreen] at (axis cs:1.06,6){2658};
\node[black] at (axis cs:1.06,5){2560};
\node[black] at (axis cs:1.06,4){2560};
\node[OliveGreen] at (axis cs:1.06,3){2564};
\node[black] at (axis cs:1.06,2){2622};
\node[black] at (axis cs:1.06,1){2550};

\end{axis}

\end{tikzpicture}
\caption{Comparison of PDE-(G-)CNNs tested on Lines. The new architectures based on RDS filtering are highlighted in \textcolor{OliveGreen}{green}.}
\label{fig:trained_inpainting}
\end{figure*}

\subsubsection{Denoising}\label{sec:trained_denoising}
We have adapted the DRIVE dataset \cite{Staal2004RidgeRetina} as a denoising task.\footnote{Currently available at \url{https://drive.grand-challenge.org/}.} DRIVE consists of colour images of the retina, paired with vessel segmentation masks. Typically, the goal is to predict the vessel segmentation mask from the retinal image. For our denoising task, we ignore the masks, and add random correlated white noise as in Sec.~\ref{sec:denoising}: now the standard deviation is $\rho = 2$ and the power of the noise is $\sigma = 0.25$.\footnote{The images have been normalised to take values in $[0, 1]$, as is common in machine learning tasks.} The dataset contains 20 training images of $584 \times 565$ pixels, which we divide into patches of $64 \times 64$ pixels overlapping by 16 pixels. After removing patches without any vessels, we are left with 2409 patches. Every time a training patch is encountered, new correlated white noise is generated. The dataset additionally contains 20 test images of $584 \times 565$ pixels, which are evaluated whole. 

We optimise the Mean Squared Error (MSE), and then quantify the performance in terms of PSNR (Eq.~\eqref{eq:psnr}) and Structural Similarity Index Measure (SSIM) \cite{Wang2004ImageSimilarity}. SSIM was created as a measure of perceptual image similarity, as opposed to pixel-wise errors.

To make the comparison fair, we keep the network complexity (approximately) constant. All networks have 6 layers. The CDE and DDE PDE-G-CNNs have a width of 16 channels; the RDS PDE-G-CNN has a width of 15 channels; the CDE and DDE PDE-CNNs have a width of 17 channels; and the RDS PDE-CNN has a width of 16 channels. In this way every network has $\sim 4200$ trainable parameters.

The quantitative results are summarised in Fig.~\ref{fig:trained_denoising}. Notably, the spread in the performance of a given architecture is rather large, which suggests that the optimisation hyperparameters could be improved. We have also visualised a sample set of outputs from the networks that achieved the best performance on the test set Figs.~\ref{fig:drive}c-h. Overall, the PDE-G-CNNs perform better than the PDE-CNNs, particularly in terms of PSNR. This is reflected in the visual performance: all networks manage to reconstruct the main vascular structure, but the PDE-CNNs produce noticeably softer outputs. We also see that the RDS PDE-G-CNN can outperform the DDE and CDE PDE-G-CNNs in terms of PSNR, with again a comparatively small improvement in SSIM.

\subsubsection{Inpainting}\label{sec:trained_inpainting}
We use the Lines dataset \cite{Bellaard2023Analysis} as an inpainting task. The goal of the Lines dataset is the following: given an input image of seemingly randomly placed line segments (Fig.~\ref{fig:lines}a), complete the line and remove the spurious line segments (Fig.~\ref{fig:lines}b). The dataset consists of 512 pairs of inputs and segmentations for training and validation, and 128 pairs for testing.

The images are normalised to take values in the range $[0, 1]$.\footnote{This can be achieved by passing the output of the networks through a sigmoid function.}  We quantify the performance using the Dice coefficient and the precision, and optimise a continuous Dice loss; more details can be found in App.~\ref{app:inpainting_metrics}.

To make for a fair comparison, we keep the network complexity (approximately) constant. All networks have 6 layers. The CDE and DDE PDE-G-CNNs have a width of 16 channels; the RDS PDE-G-CNN has a width of 15 channels; the CDE and DDE PDE-CNNs have a width of 18 channels; and the RDS PDE-CNN has a width of 17 channels. In this way every network has $\sim 2600$ trainable parameters.

The results are summarised in Fig.~\ref{fig:trained_inpainting}. We first note that the PDE-G-CNNs outperform the PDE-CNNs. This could be due to the improved expressivity permitted by processing in $\M$ instead of $\Rtwo$. For the PDE-G-CNNs, all PDEs perform approximately equally well. However, the RDS PDE-CNN significantly outperforms both the CDE and DDE PDE-CNNs, and approaches the performance of the PDE-G-CNNs. Expressivity may again play a role. Recall that we constrained the networks to use isotropic metrics for the sake of equivariance. However, the RDS PDE layer does not act isotropically due to the gating, which could be advantageous in this highly anisotropic inpainting task. Since PDE-CNNs are in general less computationally demanding than PDE-G-CNNs, this could make the RDS PDE-CNN an interesting alternative to PDE-G-CNNs in compute constrained environments.

We have also visualised a sample set of outputs Figs.~\ref{fig:lines}c-h. Since it is only a single sample (out of 740), we cannot draw strong conclusions from the comparison. However, it does show where typical errors that are made. For instance, we see that three of the networks incorrectly identify a line in the top-right corner, due to the presence of a pair of line segments that are coincidentally aligned. 

\section{Conclusion}\label{sec:conclusion}
In this article, we developed and investigated RDS filtering on position-orientation space $\M$, which preserves crossings unlike RDS filtering on $\Rtwo$. For this, we use generalised Laplacians that are induced by Lie-Cartan connections instead of the Levi-Civita connection. We showed how these Lie-Cartan Laplacians differ from the standard Laplace-Beltrami operator (Thms.~\ref{thm:left_invariant_lie_cartan_laplacian} and \ref{thm:data_driven_lie_cartan_laplacian}), yet still generate analytic evolutions (Thm.~\ref{thm:data_driven_lie_cartan_laplacian_sectorial} and Cor.~\ref{cor:lie_cartan_laplacian_sectorial}). We additionally prove our scheme satisfies a maximum-minimum principle (Thm.~\ref{thm:stability}). Subsequently, we showed experimentally that RDS filtering can outperform existing algorithms (TR-TV \cite{smets2021tvflow}, BM3D \cite{bm3dold}, NLM \cite{nlm}, $\Rtwo$ RDS \cite{schaefer2024regularisedds}) on denoising tasks in terms of maximal PSNR and sensitivity to stopping time, cf. Fig.~\ref{fig:plots}. Additionally, $\M$ RDS filtering is capable of inpainting crossing structures, unlike $\Rtwo$ RDS filtering, cf. Fig,~\ref{fig:inpainting}.

One limitation with RDS filtering, shared with many PDE-based image processing techniques, is that it requires manually optimising numerous parameters. We therefore integrated it into the PDE-based Convolutional Neural Network framework \cite{Smets2022PDEGCNNs}, creating both PDE-CNNs on $\Rtwo$ and PDE-G-CNNs on $\M$. In these networks, the PDE parameters are learned from data. We compare RDS PDE-(G-)CNNs to existing PDE-(G-)CNNs at equal network complexity on a denoising and an inpainting task. We find that the RDS PDE-CNN significantly outperforms the other PDE-CNNs on the inpainting task, approaching the performance of the PDE-G-CNNs. We hypothesise that this is due to the highly anisotropic nature of the contour completion task: like the PDE-G-CNNs, but unlike the other PDE-CNNs, the RDS PDE-CNN layer can simultaneously be anisotropic and equivariant. This could make the RDS PDE-CNN an interesting, more computationally affordable, alternative to existing PDE-G-CNNs.

\textbf{Future work:} One advantage of PDE-G-CNNs compared to other machine learning frameworks is their improved interpretability, due in part to the fact that the used PDEs are well-known from classical image processing. It would therefore be interesting to better understand the RDS PDE, for instance by examining whether it is a gradient flow like e.g. TR-TV \cite{smets2021tvflow}. Additionally, the accuracy of our proposed trainable RDS method should be investigated, to ensure that intuitions from the classical PDE method can be carried over to the new machine learning method.

\backmatter

\bmhead{Acknowledgements}
The Dutch Research Council (NWO) is gratefully acknowledged for its financial support via \href{https://www.nwo.nl/projecten/vic202031}{VIC.202.031}. Furthermore, the Eindhoven AI Systems Institute (EAISI) is gratefully acknowledged for financial support through the \href{https://www.tue.nl/en/research/institutes/eindhoven-artificial-intelligence-systems-institute/ai-research/eaisi-emdair-program/#c392690}{EMDAIR programme}.

We gratefully acknowledge B.M.N. Smets for developing \& maintaining the open source package LieTorch in which we integrated the new RDS PDEs, and for providing valuable input on the gating mechanism. 

\bmhead{Availability of Data and Code}
The code and data for the experiments in Sec.~\ref{sec:experiments} are available at \url{https://github.com/finnsherry/M2RDSFiltering}. The code for the experiments in Sec.~\ref{sec:experiments_trained} is available at \url{https://gitlab.com/bsmetsjr/lietorch}. The DRIVE dataset \cite{Staal2004RidgeRetina} is available at \url{https://drive.grand-challenge.org/}. The Lines dataset is available from the authors of \cite{Bellaard2023Analysis} on request.

\section*{Declarations}
\bmhead{Conflict of interest} R. Duits is a member of the editorial board of Journal of Mathematical Imaging and Vision (JMIV).

\bibliography{sn-bibliography}\label{references}

\begin{appendices}

\section{Equivariant Processing}\label{sec:equivariance}
Taking into account symmetries has been an invaluable principle in myriad fields, not least image processing. Symmetries are actions that preserve structure, and a processing method preserves a symmetry if the two commute. This means that first performing a symmetry action and subsequently processing gives the same output as first processing and thereafter performing a corresponding symmetry action. We formalise this mathematically with the concept of \emph{equivariance}:
\begin{definition}[Equivariance]\label{def:equivariance}
Let $G$ be a Lie group, $X$ and $Y$ be Banach spaces, and $\U: G \to B(X)$ and $\V: G \to B(Y)$ be representations. An operator $\Phi: X \to Y$ is said to be \emph{equivariant} if it commutes with the representations, i.e. $\Phi \after \U_g = \V_g \after \Phi$ for $g \in G$.
\end{definition}
Of particular interest are equivariant operators on function spaces, since we can model e.g. images as functions on the plane. If $G$ acts on homogeneous space $M$, then it naturally induces the \emph{quasi-regular representation} $\U$ on $\Ltwo(M)$:
\begin{equation}\label{eq:quasi_regular_representation_general}
(\U_g f) (p) \coloneqq f(g^{-1} p),
\end{equation}
for all $g \in G$, $p \in M$, and $f \in \Ltwo(M)$. In classical image processing, our methods should respect symmetries of the plane. We focus on roto-translation symmetry, encoded in the special Euclidean group $\SE(2)$ (Def.~\ref{def:se2}).
$\SE(2)$ acts on the plane by $(\vec{x}, R) \vec{y} = \vec{x} + R \vec{y}$ for $(\vec{x}, R) \in \SE(2)$, $\vec{y} \in \Rtwo$, so that the induced quasi-regular representation $\U$ is given by
\begin{equation}\label{eq:quasi_regular_representation_r2}
(\U_{(\vec{x}, R)} f) (\vec{y}) \coloneqq f(R^{-1}(\vec{y} - \vec{x})),
\end{equation}
for $(\vec{x}, R) \in \SE(2)$, $\vec{y} \in \Rtwo$, and $f \in \Ltwo(\Rtwo)$.
Likewise, $\SE(2)$ acts on position-orientation space by $L_{(\vec{x}, R_\theta)} (\vec{y}, \phi) \coloneqq (R_\theta \vec{y} + \vec{x}_g, \phi + \theta)$ for $(\vec{x}, R_\theta) \in \SE(2)$, $(\vec{y}, \phi) \in \M$, inducing the regular representation
\begin{equation}\label{eq:quasi_regular_representation_m2}
\begin{split}
(\cL_{(\vec{x}, R_\theta)} f) (\vec{y}, \phi) & \coloneqq (f \after L_{(\vec{x}, R_\theta)}^{-1}) (\vec{y}, \phi) \\
& = f(R_\theta^{-1}(\vec{y} - \vec{x}), \phi - \theta),
\end{split}
\end{equation}
for $(\vec{x}, R) \in \SE(2)$, $(\vec{y}, \phi) \in \M$, and $f \in \Ltwo(\M)$.
If we are now performing processing on images on $\Rtwo$, we want it to commute with the quasi-regular representation $\U$. Intuitively, this means that if the input of the processing is roto-translated in some way, the output must be roto-translated accordingly. When we do multi-orientation processing (see Sec.~\ref{sec:m2}), in intermediate steps the data lives on $\M$, and so the processing between those steps should commute with $\cL$. This can be achieved by working with e.g. invariant \eqref{eq:invariant_vector_field} or gauge frame \eqref{eq:first_gauge_vector} vector fields. Finally, the data must be lifted from $\Rtwo$ to $\M$ and projected back from $\M$ to $\Rtwo$ in an equivariant manner. This means that we must have
\begin{equation*}
\W_\psi \after \U_g = \cL_g \after \W_\psi, \textrm{ and } \Proj \after \cL_g = \U_g \after \Proj,
\end{equation*}
for all $g \in \SE(2)$. It is not hard to confirm that this is indeed the case for the orientation score transform and fast reconstruction formula (Def.~\ref{def:orientation_score}), for more details see \cite{Duits2005PerceptualAnalysis}.

\section{Computing the Gauge Frame}\label{app:computing_gauge_frame}
We here discuss how to compute the first gauge vector field $\A_1^U$; the others can be directly computed from $\A_1^U$ as explained in Sec.~\ref{sec:gauge_frames}. By Def.~\ref{def:first_gauge_vector}, the first gauge vector field is given by 
\begin{equation*}
\A_1^U|_p \coloneqq \underset{\substack{X \in T_p \M \\ \norm{X}_{\G_\xi} = 1}}{\argmin} \norm*{\nabla_X^{[0]} \gradient_{\G_\xi}|_p U}_{\G_\xi}^2,
\end{equation*}
where $\nabla_\cdot^{[0]}$ is the $0$ Lie-Cartan connection (Def.~\ref{def:lie_cartan_connection}). We start, using the properties of the $0$ Lie-Cartan connection and the definition of the Riemannian gradient, by expanding the Hessian term in Eq.~\eqref{eq:first_gauge_vector}:
\begin{equation*}
\begin{split}
\nabla_X^{[0]} \gradient_{\G_\xi}|_p U & = X^k \nabla_{\A_k}^{[0]} (g^{ij} \A_i|_p U \A_j|_p) \\
& = X^k g^{ij} (\A_k \A_i|_p U) \A_j|_p.
\end{split}
\end{equation*}
Now, we note that the metric $\G_\xi$ is diagonal, with components $g_{11} = \xi^2 = g_{22}$, $g_{33} = 1$, and $g_{ij} = 0$ for $i \neq j$. We define $\xi_j \coloneqq \sqrt{g_{jj}}$ as a short-hand. We can then write out the norm in \eqref{eq:first_gauge_vector} as
\begin{equation*}
\begin{split}
& \norm*{\nabla_X^{[0]} \gradient_{\G_\xi}|_p U}_{\G_\xi}^2 = \norm*{\sum_{i,j,k} X^k g^{ij} (\A_k \A_i|_p U) \A_j|_p}_{\G_\xi}^2 \\
& = \norm*{\sum_{i,j} X^i \xi_j^{-2} (\A_i \A_j|_p U) \A_j|_p}_{\G_\xi}^2 \\
& = \sum_j \xi_j^2 \left(\sum_i X^i \xi_j^{-2} (\A_i \A_j|_p U)\right)^2.
\end{split}
\end{equation*}
Since the expressions aren't linear, we no longer use the Einstein summation convention and instead sum explicitly. We may suggestively rewrite the norm as
\begin{multline*}
\norm*{\nabla_X^{[0]} \gradient_{\G_\xi}|_p U}_{\G_\xi}^2 = \\ \sum_j \left(\sum_i \A_i \A_j|_p U X^i\right) \xi_j^{-2} \left(\sum_i \A_i \A_j|_p U X^i\right).
\end{multline*}
We now define the matrix field $\mat{H}$ with the components $H_j^i \coloneqq \A_j \A_i U$,
with the upper and lower indices corresponding to the rows and columns, respectively. We furthermore define
$\mat{M}_\xi \coloneqq \operatorname{diag}(\xi, \xi, 1)$ and $\vec{X} \coloneqq (X^1, X^2, X^3)^T$, where $X^i$ is the the $i$-th component of $X$ with respect to the invariant frame. We now note that 
\begin{equation*}
\sum_i \A_i \A_j|_p U X^i = \sum_i H_i^j X^i = [\mat{H} \vec{X}]^j,
\end{equation*}
so that 
\begin{equation*}
\begin{split}
& \sum_j \left(\sum_i \A_i \A_j|_p U X^i\right) \xi_j^{-2} \left(\sum_i \A_i \A_j|_p U X^i\right) \\
& = (\mat{H} \vec{X})^T \mat{M}_\xi^{-2} (\mat{H} \vec{X}) \\
& = \norm{\mat{M}_\xi^{-1} \mat{H} \vec{X}}_2^2.
\end{split}
\end{equation*}
Finally, we may rewrite the constraint:
\begin{equation*}
\norm{X}_{\G_\xi}^2 = \norm{\mat{M}_\xi \vec{X}}_2^2.
\end{equation*}
Therefore, we see that Problem \eqref{eq:first_gauge_vector} is equivalent to the following optimisation problem:
\begin{equation}\label{eq:first_gauge_vector_svd}
\textrm{minimise } \norm{\mat{M}_\xi^{-1} \mat{H} \vec{X}}_2^2, \textrm{ s.t. } \norm{\mat{M}_\xi \vec{X}}_2^2 = 1.
\end{equation}
We can now solve Problem \eqref{eq:first_gauge_vector_svd} with the Lagrange multiplier method, with Lagrangian
\begin{equation*}
\cL(\vec{X}; \lambda) \coloneqq \norm{\mat{M}_\xi^{-1} \mat{H} \vec{X}}_2^2 + \lambda (1 - \norm{\mat{M}_\xi \vec{X}}_2^2).
\end{equation*}
Differentiating to the components of $\vec{X}$ and $\lambda$, we find
\begin{multline*}
\nabla_{\vec{X}, \lambda} \cL(\vec{X}; \lambda) = \\ \begin{pmatrix}
2 \mat{H}^T \mat{M}_\xi^{-1} (\mat{M}_\xi^{-1} \mat{H} \vec{X}) - 2 \lambda \mat{M}_\xi (\mat{M}_\xi \vec{X}) \\
1 - \vec{X}^T \mat{M}_\xi^{2} \vec{X}
\end{pmatrix} = \vec{0}.
\end{multline*}
Cleaning up the derivatives with respect to the components of $\vec{X}$ we get
\begin{multline*}
2 \mat{H}^T \mat{M}_\xi^{-1} (\mat{M}_\xi^{-1} \mat{H} \vec{X}) = 2 \lambda \mat{M}_\xi (\mat{M}_\xi \vec{X}) \\
\iff \mat{H}^T \mat{M}_\xi^{-2} \mat{H} \vec{X} = \lambda \mat{M}_\xi^2 \vec{X} \\
\iff \mat{M}_\xi^{-1} \mat{H}^T \mat{M}_\xi^{-2} \mat{H} \mat{M}_\xi^{-1} \tilde{\vec{X}} = \lambda \tilde{\vec{X}},
\end{multline*}
where $\tilde{\vec{X}} \coloneqq \mat{M}_\xi \vec{X}$. We can now interpret $\tilde{\vec{X}}$ as a singular vector of $\mat{M}_\xi^{-1} \mat{H} \mat{M}_\xi^{-1}$. We can additionally interpret $\tilde{\vec{X}}$ as a sort of dimensionless version of $\vec{X}$. It is then evident that Problem \eqref{eq:first_gauge_vector_svd} is
solved by $\vec{X} = \mat{M}_\xi^{-1} \tilde{\vec{X}}$ where $\tilde{\vec{X}}$ is the singular vector corresponding to the smallest singular value $\lambda_\textrm{min}$ of $\mat{M}_\xi^{-1} \mat{H} \mat{M}_\xi^{-1}$, moreover satisfying $\norm{\mat{M}_\xi \vec{X}}_2^2 = 1$.

In practice, we also regularise the matrix field $\mat{H}$ componentwise, i.e. we instead define
\begin{equation*}
H_j^i \coloneqq G_\alpha *_{\SE(2)} (\A_j \A_i U),
\end{equation*}
where $G_\alpha$ is a spatially isotropic Gaussian kernel with scale $\alpha > 0$ and $*_{\SE(2)}$ is the group convolution on $\M$. This helps smooth out the gauge frame.

Finally, we prove that the gauge frame is equivariant.
\begin{proposition}[Equivariance of Gauge Frame]\label{prop:equivariance_gauge_frame}
The gauge frame $\{\A_i^\cdot\}_i: C^\infty(\M) \to \sections(T\M)^3$ is equivariant, i.e.
\begin{equation}\label{eq:equivariance_gauge_frame}
\A_i^{\cL_g U}|_p = (L_g)_* \A_i^U|_{L_g^{-1} p},
\end{equation}
for all $U \in C^\infty(\M)$, $g \in \SE(2)$ $p \in \M$, $i = 1, 2, 3$.
\end{proposition}
\begin{proof}
Note that for all $i, j$, $H_j^i: C^\infty(\M) \to C^\infty(\M)$ is equivariant, since for all $U \in C^\infty(\M)$ and $g \in \SE(2)$
\begin{equation*}
G_\alpha *_{\SE(2)} (\A_j \A_i \cL_g U) = \cL_g G_\alpha *_{\SE(2)} (\A_j \A_i U),
\end{equation*}
due to the invariance of $\A_i$ and $\A_j$ and the equivariance of $\SE(2)$ convolutions. Consequently, the matrix
\begin{equation*}
\mat{M}_\xi^{-1} \mat{H}^T \mat{M}_\xi^{-2} \mat{H} \mat{M}_\xi^{-1}: C^\infty(\M) \to C^\infty(\M, \R^{3 \times 3})
\end{equation*}
is equivariant, so that also $\vec{X}: C^\infty(M) \to C^\infty(M, \R^3)$, the vector corresponding to the smallest singular value of this matrix, is equivariant. It then follows that $\A_1^\cdot$ is equivariant, since for all $U \in C^\infty(\M)$, $g \in \SE(2)$, and $p \in \M$
\begin{equation*}
\begin{split}
\A_1^{\cL_g U} |_p & = X_{\cL_g U}^i(p) \A_i |_p = \cL_g X_U^i(p) \A_i |_p \\
& = X_U^i(L_g^{-1} p) (L_g)_* \A_i |_{L_g^{-1} p} \\
& = (L_g)_* X_U^i(L_g^{-1} p) \A_i |_{L_g^{-1} p} = (L_g)_* \A_1^U |_{L_g^{-1} p}.
\end{split}
\end{equation*}
The equivariance of $\A_2^\cdot, \A_3^\cdot: C^\infty(\M) \to \sections(T\M)$ then follows from the fact that selecting perpendicular vectors is also equivariant as the metric is invariant.
\end{proof}

\section{Riemannian Divergence in terms of Levi-Civita Connection}\label{app:divergence_connection}
It is a known result that on a Riemannian manifold $(M, \G)$ it holds that 
\begin{equation*}
\divergence_\G(X) = \trace(\nabla_\cdot^\mathrm{LC} X)
\end{equation*}
for all $X \in \sections(T M)$, see e.g. \cite[Problem~5-14]{lee2018riemannian}. For completeness, we give a proof of this result. For this, we will evaluate both expressions at each point $p \in M$ in normal coordinates, i.e. coordinates $(x^i)$ such that the connection symbols of the Levi-Civita connection with respect to the coordinate frame $\{\partial_i\}_i$ vanish at $p$.

Since $\{\partial_i\}_i$ form a coordinate frame, the divergence is given by
\begin{equation*}
\divergence_\G (X) = \frac{1}{\sqrt{\det(g)}} \partial_i (\sqrt{\det(g)} X^i).
\end{equation*}
In normal coordinates, the first partial derivatives around $p$ of the components of the metric vanish. Using the chain rule we can then see that the first partial derivatives around $p$ of the determinant then also vanish:
\begin{equation*}
\partial_i g = \frac{\partial \det(g)}{\partial g_{kl}} \partial_i g_{kl} = 0.
\end{equation*}
Consequently we may conclude that
\begin{equation*}
\divergence_\G (X) = \frac{1}{\sqrt{\det(g)}} \partial_i (\sqrt{\det(g)} X^i) = \partial_i X^i.
\end{equation*}
The trace of the covariant derivative is given by
\begin{equation*}
\begin{split}
\trace(\nabla_\cdot^\mathrm{LC} X) & = \langle \diff x^i, \nabla_{\partial_i}^\mathrm{LC} (X^j \partial_j) \rangle \\
& = \langle \diff x^i, (\partial_i X^j) \partial_j + X^j \Gamma_{ij}^k \partial_k \rangle \\
& = \partial_i X^i + \Gamma_{ij}^i X^j.
\end{split}
\end{equation*}
Since we are using normal coordinates, at $p$ this reduces to
\begin{equation*}
\trace(\nabla_\cdot^\mathrm{LC}|_p X) = \partial_i|_p X^i.
\end{equation*}
Therefore,
\begin{equation*}
\divergence_\G|_p(X) = \trace(\nabla_\cdot^\mathrm{LC}|_p X)
\end{equation*}
for all $p \in M$, as required.

Note that $\trace(\nabla_\cdot^\mathrm{LC} X)$ does not depend on the orientability of $M$, unlike the typical definition of divergence in terms of the Riemannian volume form.

\section{Properties of Gauge Frame Lie-Cartan Connections}\label{app:properties_lie_cartan}
We here show that the $0$ gauge frame Lie-Cartan connection is metric compatible with any data-driven invariant metric.
\begin{proposition}
Let $G$ be a Lie group, with gauge frame $\{\A_i^U\}_i$, and let $\G^U$ be a data-driven invariant metric thereon. Then the $\nu$ gauge frame Lie-Cartan connection $\nabla^{[\nu], U}$ is metric compatible if $\nu = 0$.
\end{proposition}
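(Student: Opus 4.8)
The plan is to verify the defining identity for metric compatibility of an affine connection, namely
\begin{equation*}
X\big(\mathcal{G}^U(Y, Z)\big) = \mathcal{G}^U\big(\nabla^{[0], U}_X Y, Z\big) + \mathcal{G}^U\big(Y, \nabla^{[0], U}_X Z\big)
\end{equation*}
for all $X, Y, Z \in \sections(TG)$, by expanding $Y$ and $Z$ in the gauge frame $\{\A_i^U\}_i$ and using two structural facts. First, the components $g_{ij}^U := \mathcal{G}^U(\A_i^U, \A_j^U)$ are \emph{constant} on $G$, since $\mathcal{G}^U$ is a data-driven invariant metric \eqref{eq:data_driven_invariant_metrics}, so $X g_{ij}^U = 0$ for any $X$. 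Second, $\nabla^{[0], U}$ annihilates frame-constant fields: $\nabla^{[0], U}_{\A_i^U} \A_j^U = 0 \cdot [\A_i^U, \A_j^U] = 0$ by Def.~\ref{def:data_driven_lie_cartan_connection} with $\nu = 0$.

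First I would bootstrap the second fact from frame-constant $X$ to arbitrary $X \in \sections(TG)$: by $C^\infty(G)$-linearity of a connection in its first slot together with the Leibniz rule in its second slot (exactly the manipulation in Rem.~\ref{rem:uniqueness_lie_cartan_connection}), writing $Y = Y^j \A_j^U$ with $Y^j \in C^\infty(G)$ yields $\nabla^{[0], U}_X Y = (X Y^j)\,\A_j^U + Y^j X^i \nabla^{[0], U}_{\A_i^U}\A_j^U = (X Y^j)\,\A_j^U$. In other words, $\nabla^{[0], U}_X$ differentiates only the gauge-frame components, behaving like the trivial connection associated to the global frame $\{\A_i^U\}_i$.

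Next I would substitute $Y = Y^j\A_j^U$, $Z = Z^k\A_k^U$ into both sides of the compatibility identity. On the left, $X\big(\mathcal{G}^U(Y,Z)\big) = X\big(Y^j Z^k g_{jk}^U\big)$; applying the Leibniz rule for the derivation $X$ on this product of functions and discarding the $X g_{jk}^U$ terms (first fact) leaves $g_{jk}^U\big((XY^j)Z^k + Y^j(XZ^k)\big)$. On the right, using the previous step and $\mathcal{G}^U$-bilinearity, one gets $(XY^j)Z^k g_{jk}^U + Y^j(XZ^k) g_{jk}^U$. The two expressions coincide, establishing $\nabla^{[0], U}\mathcal{G}^U = 0$. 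This closing step is a short, routine calculation.

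I do not expect a genuine obstacle; the one point requiring care is that Def.~\ref{def:data_driven_lie_cartan_connection} pins down $\nabla^{[0], U}$ only on pairs of frame-constant vector fields, so the relation cannot be quoted verbatim for a general $X$ — it must first be extended via $C^\infty(G)$-linearity in the first argument, as above, before being combined with the Leibniz rule. For completeness I would also remark in one line that running the same computation for $\nu \neq 0$ leaves a residual term proportional to $\nu$ and built from the structure functions $d_{ij}^k$, which does not vanish for a generic data-driven invariant metric, matching the uniqueness of the metric-compatible gauge frame Lie-Cartan connection asserted in Sec.~\ref{sec:generalised_laplacians}.
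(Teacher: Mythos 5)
Your proposal is correct and follows essentially the same route as the paper's proof: both arguments rest on the two facts that the components $g_{ij}^U$ are constant (so their frame derivatives vanish) and that $\nabla^{[0],U}$ annihilates the gauge-frame fields. The only cosmetic difference is that the paper checks the compatibility identity directly on the frame fields $\{\A_i^U\}_i$ for general $\nu$ (reducing it to $0 = \nu(d_{ij}^l g_{lk} + d_{ik}^l g_{jl})$ and then setting $\nu=0$), whereas you spell out the extension to arbitrary $X,Y,Z$ via $C^\infty(G)$-linearity and the Leibniz rule — a step the paper leaves implicit, justified by the tensoriality of $\nabla\mathcal{G}^U$.
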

\begin{proof}
Since $\G^U$ is a data-driven invariant metric, we have
\begin{equation*}
\G^U(\A_i^U, \A_j^U) = g_{ij}
\end{equation*}
for constants $g_{ij} \in \R$. Hence, we see that
\begin{equation*}
\A_i^U \G^U(\A_j^U, \A_k^U) = \A_i^U g_{jk} = 0.
\end{equation*}
We also have
\begin{equation*}
\begin{split}
\G^U(\nabla_{\A_i^U}^{[\nu], U} \A_j^U, \A_k^U) & = \G^U(\nu [\A_i^U, \A_j^U], \A_k^U) \\
& \eqqcolon \nu d_{ij}^l \G^U(\A_l^U, \A_k^U) =  \nu d_{ij}^l g_{lk}.
\end{split}
\end{equation*}
Likewise,
\begin{equation*}
\G^U(\A_j^U, \nabla_{\A_i^U}^{[\nu], U} \A_k^U) = \nu d_{ik}^l g_{jl}.
\end{equation*}
Hence, metric compatibility comes down to
\begin{equation*}
0 = \nu (d_{ij}^l g_{lk} + d_{ik}^l g_{jl}),
\end{equation*}
which is certainly satisfied when $\nu = 0$.
\end{proof}
In practice, the gauge frame Lie-Cartan connection will only be metric compatible if $\nu = 0$, since the structure functions $d_{ij}^k$ can vary wildly and will not cancel out. Note that there is an analogous result for the standard $0$ Lie-Cartan connection with invariant metrics \cite[Cor.~2]{duits2021cartanconnection}.

\section{Proof of Stability\texorpdfstring{ Thm.~\ref{thm:stability}}{}}\label{app:proof_stability}
We here prove Thm.~\ref{thm:stability}.
\begin{theorem}[Stability of RDS filtering on \texorpdfstring{$\M$}{position-orientation space}]
Choose timestep $\tau \leq \min\{\tau_D, \tau_S\}$, where
\begin{equation}
\begin{split}
\tau_D^{-1} & \coloneqq 2 \left(\frac{g_D^{11} + g_D^{22}}{\Delta_{xy}^2} + \frac{g_D^{33}}{\Delta_\theta^2}\right), \textrm{ and } \\
\tau_S^{-1} & \coloneqq \sqrt{\frac{g_M^{11} + g_M^{22}}{\Delta_{xy}^2} + \frac{g_S^{33}}{\Delta_\theta^2}}.
\end{split}
\end{equation}
Then, this scheme satisfies a maximum-minimum principle:
\begin{equation}
\min_{w, h, o} U_{w, h, o}^0 \leq U_{i, j, k}^t \leq \max_{w, h, o} U_{w, h, o}^0,
\end{equation}
for all $i, j, k, t$, where $U^t$ is solution at time step $t \in \N_{\geq 0}$.
\end{theorem}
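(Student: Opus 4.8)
The plan is to follow the strategy of \cite[Thm.~1]{schaefer2024regularisedds}: write out the explicit forward-Euler scheme, use the pointwise bound $g(\cdot)\in[0,1]$ to exhibit one step as a convex combination of a pure diffusion update and a pure shock update, verify a maximum--minimum principle for each of the two separately, and close by induction on $t$. Concretely, denoting by $\laplace_{\mathcal{G}_D}$ and by $\norm{\gradient_{\mathcal{G}_M}\cdot}_{\mathcal{G}_M}$ the discretisations from Sec.~\ref{sec:pde_scheme}, one step reads $U^{t+1}_{i,j,k} = g^t_{i,j,k}\,V^t_{i,j,k} + (1-g^t_{i,j,k})\,W^t_{i,j,k}$ with $g^t_{i,j,k}\in[0,1]$, where $V^t \coloneqq U^t + \tau\,\laplace_{\mathcal{G}_D} U^t$ is the diffusion update and $W^t \coloneqq U^t - \tau\, S^t\,\norm{\gradient_{\mathcal{G}_M} U^t}_{\mathcal{G}_M}$ the shock update, with $S^t\in[-1,1]$ the discretised switch. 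Since $\tau\le\min\{\tau_D,\tau_S\}$, it then suffices to show that $V^t$ obeys the maximum--minimum principle relative to $U^t$ when $\tau\le\tau_D$ and that $W^t$ does so when $\tau\le\tau_S$; the convex combination inherits the two-sided bound, which then propagates down to $t=0$.

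For the diffusion update I would expand $\laplace_{\mathcal{G}_D} = g_D^{11}\A_1^2 + g_D^{22}\A_2^2 + g_D^{33}\A_3^2$ --- there are no mixed terms because $\mathcal{G}_D$ is diagonal --- and discretise each $\A_\ell^2$ by the second-order central difference along $\A_\ell$, with step $\Delta_{xy}$ for $\ell=1,2$ and $\Delta_\theta$ for $\ell=3$. The off-grid samples at $p\pm\Delta_{xy}\A_\ell$ are evaluated by trilinear interpolation; since such a displacement stays inside the grid cell adjacent to $p$, the interpolation expresses each sample as a convex combination of the surrounding grid values (nonnegative weights summing to one). Collecting terms, $V^t_{i,j,k}$ is a linear combination of values of $U^t$ whose off-centre weights are nonnegative and whose weights sum to one; the only one that can become negative is the central weight, which is bounded below by
\begin{equation*}
1 - 2\tau\Big(\tfrac{g_D^{11}+g_D^{22}}{\Delta_{xy}^2} + \tfrac{g_D^{33}}{\Delta_\theta^2}\Big) = 1 - \tau/\tau_D,
\end{equation*}
nonnegative exactly when $\tau\le\tau_D$. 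Hence $V^t$ is a genuine convex combination of neighbouring values of $U^t$, and therefore maps $[\min U^t,\max U^t]$ into itself.

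For the shock update I would discretise $\norm{\gradient_{\mathcal{G}_M} U}_{\mathcal{G}_M}$ with the Rouy--Tourin-style upwind scheme of Sec.~\ref{sec:pde_scheme} (again using interpolation for off-grid samples) and use $|S^t|\le 1$. The argument then runs parallel to the $\Rtwo$ case: at nodes where $S^t<0$ the update performs a fraction of a dilation step, which cannot raise $U^{t+1}_{i,j,k}$ above the maximum of $U^t$ over the relevant upwind stencil, while at nodes where $S^t>0$ it performs a fraction of an erosion step, which cannot lower it below the corresponding minimum --- uniformly in $|S^t|\le 1$ --- provided $\tau\le\tau_S$; the CFL constant $\tau_S^{-1}$ comes from bounding the one-sided differences along the spatial directions $\A_1,\A_2$ (step $\Delta_{xy}$) and the orientational direction $\A_3$ (step $\Delta_\theta$). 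Thus $W^t$ also maps $[\min U^t,\max U^t]$ into itself, and combining with the diffusion step closes the induction. The gauge-frame scheme is handled identically after replacing $\{\A_i\}_i$ by $\{\A_i^U\}_i$ and the invariant metrics by data-driven invariant ones.

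I expect the main obstacle to be the bookkeeping around the non-grid-aligned invariant vector fields $\A_1,\A_2$: unlike on $\Rtwo$, the stencils are not axis-aligned, so one must check carefully that the trilinear interpolation used to sample $U$ at $p\pm\Delta_{xy}\A_\ell$ genuinely produces nonnegative weights summing to one --- so that neither the convex-combination structure of the diffusion step nor the upwind monotonicity of the shock step is spoiled --- and that the effective stencil width this induces is still controlled by the stated timestep bounds (the worst case, where $\A_\ell$ is momentarily grid-aligned, is exactly what makes $\tau_D$ and $\tau_S$ sharp). The remaining estimates are routine CFL computations mirroring \cite[Thm.~1]{schaefer2024regularisedds}.
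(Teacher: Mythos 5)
Your proposal is correct and follows essentially the same route as the paper's proof in App.~\ref{app:proof_stability}: induction on $t$, min-max stability of the trilinearly interpolated central-difference Laplacian under $\tau\le\tau_D$, a Rouy--Tourin upwind bound on the morphological term under $\tau\le\tau_S$ together with $\abs{S}\le 1$, and the convex combination via $g\in[0,1]$. Your reformulation of one step as $g\,V^t+(1-g)\,W^t$ is only a cosmetic rearrangement of the paper's term-by-term bounds, and your explicit identification of the central weight $1-\tau/\tau_D$ and of the interpolation-weight subtlety for non-axis-aligned stencils matches (and slightly elaborates on) what the paper does.
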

\begin{proof}
The proof is analogous to that of the stability result for RDS filtering on $\Rtwo$ \cite[Thm.~1]{schaefer2024regularisedds}. We show that 
\begin{equation*}
\min_{w, h, o} U_{w, h, o}^t \leq U_{i, j, k}^{t + 1} \leq \max_{w, h, o} U_{w, h, o}^t \textrm{ for all } i, j, k;
\end{equation*}
\eqref{eq:stability} then follows by induction. The discretised RDS PDE is given by 
\begin{equation*}
\begin{split}
& U_{i, j, k}^{t + 1} \coloneqq U_{i, j, k}^t + \tau \\
& \cdot \Big(g_{i, j, k}^t \cdot (\laplace_{\G_D} U)_{i, j, k}^t \\
& - (1 - g_{i, j, k}^t) \cdot S_{i, j, k}^t \cdot (\norm{\gradient_{\G_M} U}_{\G_M})_{i, j, k}^t \Big),
\end{split}
\end{equation*}
where
\begin{equation*}
\begin{split}
g_{i, j, k}^t & \coloneqq g(\norm{\gradient_{\G_g} U_\nu})_{i, j, k}^t, \textrm{ and } \\
S_{i, j, k}^t & \coloneqq S_\rho(\laplace_{\G_S}^\perp U_\sigma)_{i, j, k}^t.
\end{split}
\end{equation*}
The Laplacian term is given by
\begin{equation*}
\begin{split}
(\laplace_{\G_D} U)_{i, j, k}^t & \coloneqq \frac{g_D^{11}}{\Delta_{xy}^2} \Delta_{\cos_k, \sin_k, 0} U_{i, j, k}^t \\
& + \frac{g_D^{22}}{\Delta_{xy}^2} \Delta_{-\sin_k, \cos_k, 0} U_{i, j, k}^t \\
& + \frac{g_D^{33}}{\Delta_{\theta}^2} \Delta_{0, 0, 1} U_{i, j, k}^t,
\end{split}
\end{equation*}
with centred differences
\begin{equation*}
\Delta_{a, b, c}^c U_{i, j, k}^t \coloneqq U_{i + a, j + b, k + c}^t - 2 U_{i, j, k}^t + U_{i - a, j - b, k - c}^t
\end{equation*}
and $\cos_k \coloneqq \cos(\theta_k)$ and $\sin_k \coloneqq \sin(\theta_k)$, and where off-grid samples (e.g. $U_{i + \cos_k, j + \sin_k, k}^t$) are computed by trilinear interpolation. Since trilinear interpolation is min-max stable, we can see that 
\begin{multline*}
\min_{w, h, o} U_{w, h, o}^t - U_{i, j, k}^t \leq \tau (\laplace_{\G_D} U)_{i, j, k}^t \\ \leq \max_{w, h, o} U_{w, h, o}^t - U_{i, j, k}^t.
\end{multline*}
For the morphological term we use the following Rouy-Tourin style discretisation \cite{rouy1992viscosity,bekkers2015subriemanniangeodesics}. In areas where dilation is performed (i.e. $S_{i, j, k}^t < 0$), the discretisation is given by
\begin{equation*}
\begin{split}
((\norm{\gradient_{\G_M} U}_{\G_M})_{i, j, k}^t)^2 & \coloneqq \frac{g_M^{11}}{\Delta_{xy}^2} (\Delta_{\cos_k, \sin_k, 0}^u U_{i, j, k}^t)^2 \\
& + \frac{g_M^{22}}{\Delta_{xy}^2} (\Delta_{-\sin_k, \cos_k, 0}^u U_{i, j, k}^t)^2 \\
& + \frac{g_M^{33}}{\Delta_{xy}^2} (\Delta_{0, 0, 1}^u U_{i, j, k}^t)^2 \\
& \leq \left(\frac{g_M^{11} + g_M^{22}}{\Delta_{xy}^2} + \frac{g_S^{33}}{\Delta_\theta^2}\right) \\
&\cdot (\max_{w, h, o} U_{w, h, o}^t - U_{i, j, k}^t)^2,
\end{split}
\end{equation*}
with upwind differences
\begin{multline*}
\Delta_{a, b, c}^u U_{i, j, k}^t \coloneqq \\ \max\{U_{i + a, j + b, k + c}^t - U_{i, j, k}^t, U_{i, j, k}^t - U_{i - a, j - b, k - c}^t, 0\}
\end{multline*}
It can analogously be shown that in the case erosion is performed ($S_{i, j, k}^t > 0$), the term is bounded from below by
\begin{multline*}
((\norm{\gradient_{\G_M} U}_{\G_M})_{i, j, k}^t)^2 \geq \\ \left(\frac{g_M^{11} + g_M^{22}}{\Delta_{xy}^2} + \frac{g_S^{33}}{\Delta_\theta^2}\right) (U_{i, j, k}^t - \min_{w, h, o} U_{w, h, o}^t)^2.
\end{multline*}
Since $-1 \leq S_{i, j, k}^t \leq 1$, we can then see that 
\begin{multline*}
U_{i, j, k}^t - \min_{w, h, o} U_{w, h, o}^t \leq \tau -S_{i, j, k}^t (\norm{\gradient_{\G_M} U}_{\G_M})_{i, j, k}^t \\ \leq \max_{w, h, o} U_{w, h, o}^t - U_{i, j, k}^t.
\end{multline*}
It then follows that
\begin{equation*}
\begin{split}
U_{i, j, k}^{t + 1} & \leq U_{i, j, k}^t + g_{i, j, k}^t \cdot (\laplace_{\G_D} U)_{i, j, k}^t + (1 - g_{i, j, k}^t) \\
&  \cdot -S_{i, j, k}^t (\norm{\gradient_{\G_M} U}_{\G_M})_{i, j, k}^t \\
& \leq U_{i, j, k}^t + \max_{w, h, o} U_{w, h, o}^t - U_{i, j, k} = \max_{w, h, o} U_{w, h, o}^t,
\end{split}
\end{equation*}
and likewise $U_{i, j, k}^{t + 1} \geq \min_{w, h, o} U_{w, h, o}$, as required.
\end{proof}
The proof can be adapted to work in the gauge frame case as well. Since the gauge frame is normalised with respect to $\G_\xi$ (see Def.~\ref{def:first_gauge_vector}), while the invariant frame is normalised with respect to $\G_1$, the timestep must be changed to $\tau \leq \min\{\tau_D, \tau_S\}$, where
\begin{equation*}
\begin{split}
\tau_D^{-1} & \coloneqq 2 (g_D^{11} + g_D^{22} + g_D^{33}) / h^2, \textrm{ and } \\
\tau_S^{-1} & \coloneqq \sqrt{g_M^{11} + g_M^{22} + g_S^{33}} / h
\end{split}
\end{equation*}
for $h \coloneqq \min\{\xi \cdot \Delta_{xy}, \Delta_\theta\}$. We change our finite differences accordingly, e.g. the approximation of the terms in the Laplacian becomes
\begin{multline*}
(\A_i^U)^2 U_{i,j,k}^t \\
\approx \frac{1}{(\xi \Delta_{xy})^2} \Delta_{\xi X^1, \xi X^2, 0} U_{i,j,k}^t + \frac{1}{\Delta_\theta^2} \Delta_{X^3} U_{i,j,k}^t,
\end{multline*}
with $\A_i^U = X^j \A_j$. Then the min-max stability again follows from the stability of trilinear interpolation.

\section{Inpainting Metrics}\label{app:inpainting_metrics}
Here we provide additional details on the metrics we use to train and quantify the performance of the networks on the inpainting task in Sec.~\ref{sec:trained_inpainting}.

The performance of the networks can be quantified using the confusion matrix, which counts the number of 
\begin{itemize}
\item True Positives: $\TP(f, g) = \sum_\vec{x} \indicator\{f(\vec{x}) = 1 = g(\vec{x})\}$,
\item True Negatives: $\TN(f, g) = \sum_\vec{x} \indicator\{f(\vec{x}) = 0 = g(\vec{x})\}$,
\item False Positives: $\FP(f, g) = \sum_\vec{x} \indicator\{f(\vec{x}) = 1, g(\vec{x}) = 0\}$, and
\item False Negatives: $\FN(f, g) = \sum_\vec{x} \indicator\{f(\vec{x}) = 0, g(\vec{x}) = 1\}$,
\end{itemize}
with prediction $f$ and ground truth $g$.
We use the confusion matrix to compute the Dice coefficient, a commonly used segmentation metric \cite{Dice1945MeasuresSpecies}:
\begin{multline}\label{eq:dice}
\Dice(f, g) \coloneqq \\ \frac{2 * \TP(f, g) + \epsilon}{2 * \TP(f, g) + \FP(f, g) + \FN(f, g) + \epsilon}, 
\end{multline}
where $\epsilon \ll 1$ ensures the Dice coeffient remains well-defined when $f = 0 = g$.
Notably, this metric requires binary images, i.e. taking values in $\{0, 1\}$. This makes it unsuitable for training the networks, since this would involve thresholding, which is a discontinuous operation that is not compatible with our optimisation method. We instead optimise a continuous Dice loss: 
\begin{equation}\label{eq:inpainting_loss}
\cL_{\Dice}(f, g) = 1 - \frac{\sum_\vec{x} f(\vec{x}) \cdot g(\vec{x})}{\sum_p (f(\vec{x}) + g(\vec{x})) + \epsilon};
\end{equation}
it is not hard to verify that $\Dice(f, g) = 1 - \cL_{\Dice}(f, g)$ for binary $f, g$.
We additionally report the precision:
\begin{equation}\label{eq:precision}
\Precision(f, g) \coloneqq \frac{\TP(f, g) + \epsilon}{\TP(f, g) + \FP(f, g) + \epsilon}.
\end{equation}

\end{appendices}

\end{document}